\newcommand\Item[1][]{%
  \ifx\relax#1\relax  \item \else \item[#1] \fi
  \abovedisplayskip=0pt\abovedisplayshortskip=0pt~\vspace*{-\baselineskip}}
\definecolor{bordeaux}{rgb}{0.89, 0.0, 0.13}
\definecolor{cadmiumgreen}{rgb}{0.0, 0.42, 0.24}
\newcommand{\cateB}[1]{\textcolor{blue}{#1}}     
\newcommand{\disp}{\displaystyle}     
\newcommand{\R}{\mathbb R}
\newcommand{\de}{\partial}        
\newcommand{\Sr}{\Sigma^{r}}
\newcommand{\Sl}{\Sigma^{\ell}}
\newcommand{\Sret}{\Sigma^{R; t_o, t_a}}
\newcommand{\ba}{\begin{align}}
\newcommand{\ea}{\end{align}}
\def\rhom{\rho^{\mathrm{max}}}
\def\vmax{V^{\mathrm{max}}}
\def\qmax{Q^{\mathrm{max}}}
\def\wl{w_{L}}
\def\wr{w_{R}}
\def\vr{V_\rho}
\def\umeno{U_{-}}
\def\upiu{U_{+}}
\def\luno{\lambda_{1}}
\def\ldue{\lambda_{2}}
\def\duno{d_{1}}
\def\ddue{d_{2}}
\def\stre{s_{3}}
\def\rettar{\mathrm{r}}
\def\rettas{\mathrm{z}}
\newcommand{\dem}[2]{d(#1,#2)}
\newcommand{\supp}[2]{s(#1,#2)}
\def\pos{\mathcal{P}}
\def\neg{\mathcal{N}}
\def\dex{\partial_{x}}
\def\det{\partial_{t}}
\def\edge{\mathcal{I}}
\def\vert{\mathcal{J}}
\def\road{s}
\def\quno{q_{1}}
\def\qdue{q_{2}}
\def\rhot{\widetilde \rho}
\def\umeno{U^{-}}
\def\rhomeno{\rho^{-}}
\def\wrhomeno{\widetilde\rho^{-}}
\def\wmeno{w^{-}}
\def\upiu{U^{+}}
\def\rhopiu{\rho^{+}}
\def\wpiu{w^{+}}
\def\vpiu{v^{+}}
\def\umorto{U^{\dagger}}
\def\rhomorto{\rho^{\dagger}}
\def\wrhomorto{\widetilde\rho^{\dagger}}
\def\wmorto{w^{\dagger}}
\def\hu{\mathring U}
\def\hw{\mathring w}
\def\hrho{\mathring\rho}
\def\hduno{\mathring d_{1}}
\def\hddue{\mathring d_{2}}
\def\hstre{\mathring s_{3}}
\def\hsq{\mathring s_{4}}
\def\rsolvG{APRSOM}
\def\ondarho{\tilde\rho}
\def\ondaq{\tilde q}
\def\ondaw{\tilde w}
\def\ondaduno{\tilde\duno}
\def\ondaddue{\tilde\ddue}
\def\uS{\hat U}
\def\rhoS{\hat\rho}
\def\qS{\hat q}
\def\wS{\hat w}
\def\uV{(U_{1},\dots,U_{n},U_{n+1},\dots,U_{n+m})}
\def\uVS{(\uS_{1},\dots,\uS_{n},\uS_{n+1},\dots,\uS_{n+m})}
\def\qv{\mathcal{Q}}
\def\uVh{(\mathring U_{1},\dots,\mathring U_{n},\mathring U_{n+1},\dots,\mathring U_{n+m})}
\def\uVd{(U_{1}, U_{2}, U_{3}, U_{4})}
\def\uVhd{(\mathring U_{1},\mathring U_{2},\mathring U_{3},\mathring U_{4})}
\def\umenoS{\hat U}
\def\rhomenoS{\hat\rho}
\def\qmenoS{\hat q}
\def\wmenoS{\hat w}
\def\upiuS{\hat U}
\def\rhopiuS{\hat\rho}
\def\qpiuS{\hat q}
\def\wpiuS{\hat w}
\def\cons{Y}
\def\conszero{Y_{0}}
\def\puno{p_{1}}
\def\pdue{p_{2}}
\def\huno{h_{1}}
\def\hdue{h_{2}}
\def\htre{h_{3}}
\def\pb{\widehat{p}}
\def\sq{s_{4}}
\newcommand{\aij}[2]{\alpha_{#1#2}}
\def\di{d_{i}}
\def\qn{q_{n}}
\newcommand{\hbv}[1]{\mathsf{h}_{\mathrm{step}_{#1}}}
\newcommand{\ind}[1]{\ell_{#1}}
\def\insInd{I}
\def\indice{S}
\def\entropy{\eta}
\def\entFlux{\Phi}
\def\lunoloc{L^{1}_{\mathrm{loc}}}
\def\rsolv{\mathcal{RS}}
\def\tv{\mathrm{TV}}
\def\tb{\bar t}
\def\hh{\bar h}
\def\qstar{Q^{*}}
\def\qstar{q^{*}}
\def\at{\aij{3}{1}}
\def\bt{\aij{3}{2}}
\def\aq{\aij{4}{1}}
\def\bq{\aij{4}{2}}
\def\omegaInc{\Omega_{\mathrm{inc}}}
\def\omegaOut{\Omega_{\mathrm{out}}}
\def\qv{\mathcal{Q}}
\let\oldparagraph=\paragraph
\renewcommand\paragraph[1]{\oldparagraph{#1.}}
\theoremstyle{plain}
\newtheorem{defn}{Definition}[section]
\newtheorem{theorem}[defn]{Theorem}
\newtheorem{prop}[defn]{Proposition}
\newtheorem{lemma}[defn]{Lemma}
\newtheorem{corollario}[defn]{Corollary}
\newtheorem{remark}[defn]{Remark}
\numberwithin{equation}{section}
\numberwithin{defn}{section}
\title{\Large\textbf{Adapting Priority Riemann Solver for GSOM on road networks}}
\author{\normalsize{Caterina Balzotti}\thanks{ELEM Biotech S.L., Barcelona, Spain (\href{mailto:cbalzotti@elem.bio}{cbalzotti@elem.bio})
}
\and {\setcounter{footnote}{3}\normalsize{Roberta Bianchini}\thanks{Istituto per le Applicazioni del Calcolo ``M.\ Picone'', Consiglio Nazionale delle Ricerche, Rome, Italy (\href{mailto:roberta.bianchini@cnr.it}{roberta.bianchini@cnr.it}). }}
\and {\setcounter{footnote}{3}\normalsize{Maya Briani}\thanks{Istituto per le Applicazioni del Calcolo ``M.\ Picone'', Consiglio Nazionale delle Ricerche, Rome, Italy (\href{mailto:maya.briani@cnr.it}{maya.briani@cnr.it}). }}
\and {\setcounter{footnote}{2}\normalsize{Benedetto Piccoli}\thanks{Department of Mathematical Sciences, Rutgers University, Camden, USA (\href{mailto:piccoli@camden.rutgers.edu}{piccoli@camden.rutgers.edu}).}}
}
\date{\today}
\begin{document}

\allowdisplaybreaks

\maketitle

\begin{abstract}
\noindent In this paper, we present an extension of the Generic Second Order Models (GSOM) for traffic flow on road networks. We define a Riemann solver at the junction based on a priority rule and provide an iterative algorithm to construct solutions at junctions with $n$ incoming and $m$ outgoing roads. 

\noindent The logic underlying our solver is as follows: the flow is maximized while respecting the priority rule, which can be adjusted if the supply of an outgoing road exceeds the demand of a higher-priority incoming road. Approximate solutions for Cauchy problems are constructed using wave-front tracking.

\noindent We establish bounds on the total variation of waves interacting with the junction and present explicit calculations for junctions with two incoming and two outgoing roads. A key novelty of this work is the detailed analysis of returning waves - waves generated at the junction that return to the junction after interacting along the roads - which, in contrast to first-order models such as LWR, can increase flux variation.
\end{abstract}

\begin{description}
\item[\textbf{Keywords.}] Second order traffic models; Priority rule; Networks;  Cauchy problem; Wave-front tracking; Returning wave.
\item[\textbf{Mathematics Subject Classification.}] 90B20; 35L65.
\end{description}

\section{Introduction}\label{sec:intro}
This paper focuses on macroscopic second-order traffic models on road networks. We consider the Generic Second Order Models (GSOM) \cite{FanNHM14,FanSunPiccoliSeiboldWork2017,LebacqueMammarHajSalem2007}, 
which are a family of traffic models described by a first-order scalar conservation law for the density of vehicles $\rho$ combined with an advection equation of a certain property of drivers $w$ linked to the density of vehicles by a speed function $v=V(\rho,w)$. Through the variable $w$ it is possible to take into account different driving behaviors. In fact, $w$ parametrizes the family of fundamental diagrams $Q(\rho,w)=\rho V(\rho,w)$, whose curves correspond to different driving aptitudes. 
The model equations are given by:
\begin{equation*}
	\begin{dcases}
		\det\rho+\dex(\rho v) = 0\\
		\det w+v\ \dex(w) = 0\\
	\end{dcases}.
\end{equation*}

Traffic models on networks have been widely studied in recent years, and authors have considered several traffic scenarios proposing a rich amount of alternative models at junctions. For instance, the first order Lighthill-Whitham-Richards (LWR) model \cite{LighthillWhitham1955,Richards1956} has been extended to road networks in several papers, see \cite{bressan2014flows,dellemonache2018CMS,GaravelloPiccoli2006,garavello2006AIHP,garavello2016models,HoldenRisebro1995,lebacque2005first}, as well as the second order Aw-Rascle-Zhang (ARZ) model \cite{AwRascle2000,Zhang2002}, see \cite{GaravelloPiccoli2006AwRascle, GoettlichHertyMoutariWeissen2021, herty2006NHM,herty2006SIAM}.
Most traffic models on networks rely on solving the Riemann problem at junctions, a Cauchy problem with constant initial data on each connected road. Unique solutions require coupling conditions that differentiate models. Most models share two key assumptions: that flow through the junction is conserved and that waves generated at junctions have a negative velocity on incoming roads and a positive velocity on outgoing roads. The second assumption is necessary to guarantee that boundary-value problems are correctly solved on each road, and that the conservation of cars through the junction is guaranteed. Other common assumptions include maximizing flow through the junction and allocating vehicles on outgoing roads based on a distribution matrix. 

The solution to the Riemann problem for the system of conservation laws arising in the GSOM framework is defined by three states: left, middle, and right, connected by a shock or rarefaction wave and a contact discontinuity, respectively. We say that a $\rho$-wave is generated between the left and the middle state, and a $w$-wave is generated between the middle and the right state. Along $\rho$-waves the variable $w$ is constant while the density changes, and in $w$-waves $\rho$ and $w$ both change, and the velocity $V$ is conserved. Moreover, $\rho$-waves can travel with positive or negative speed while the speed of $w$-waves is always non-negative. 
We then consider a Riemann solver designed for the GSOM family at a junction.
The approach involves solving a left-half Riemann problem (where waves have negative velocity) at the nodes for incoming roads, and a right-half Riemann problem (where waves have positive velocity) for outgoing roads. This defines the region of admissible states to ensure that waves do not propagate into the junction. Thus, in our case, only $\rho$ waves can leave the junction toward an incoming road, and both $\rho$ waves and $w$ waves coming from the junction can enter an outgoing road. The definition of admissible solutions always excludes the non-physical case that there can be a jump with zero speed at the junction. Together with the definition of admissible states, we assume maximization of the flow and conservation of $\rho$ and  $y=\rho w$ through the junction. The determination of a unique solution is achieved by introducing a priority rule on the incoming roads and a distribution of vehicles on the outgoing roads according to a proper distribution matrix. For this, we propose a new logic which is a generalization of the approach proposed in \cite{dellemonache2018CMS} for first-order models: the flow is maximized respecting the priority rule, but the latter can be modified if the outgoing road supply exceeds the demand of the road with higher priority. In Section \ref{sec:NinM} we define the Adapting Priority Riemann Solver for Second Order Models (APRSOM) that is an iterative algorithm able to construct the solution to generic junctions with $n$ incoming and $m$ outgoing roads, computing the incoming fluxes at the junction step by step.

Once the Riemann Solver is defined, we pass to tackle 
Cauchy problem on networks associated to GSOM. In conservative form, the equations read
\begin{equation*}
	\begin{dcases}
		\det\rho_{\road}+\dex(\rho_{\road} v_{\road}) = 0\\
		\det y_{\road}+\dex(y_{\road} v_{\road}) = 0\\
		(\rho_{\road}(x,0),y_{\road}(x,0)) = (\rho_{r,0},y_{r,0}) 
	\end{dcases}
\end{equation*}
for $r=1,\ldots,n+m$ roads connected at a junction and for the initial data $(\rho_{r,0},y_{r,0}) $ with bounded variation. To prove the existence of weak solutions to the Cauchy problem, a typical approach relies on the Wave-Front-Tracking (WFT) method. The latter requires the study of Riemann problems along the roads and at junctions. Following the strategy originally proposed in \cite{garavello2006AIHP} and extended in \cite{dellemonache2018CMS}, we introduce four properties that a Riemann Solver must satisfy in order to guarantee bounds on the total variation of the flux $Q$ and of the variable $w$ for waves that interact with the junction. More precisely, these properties allow to estimate the increase in the total variation of $Q$ and $w$ due to the interaction with the junction. 
A special study has been made for \textit{returning waves} which, unlike the first-order LWR model, can lead to an increase of the flux at the junction.
We give a precise definition of returning waves and provide a general estimate of the flux variation at the junction, without distinguish between incoming and outgoing roads. However, refined estimates show that on an incoming road, a returning wave interacting with a $w$-wave along the road can cause an increase in flux at the junction and requires a specific evaluation. On the other hand, for an outgoing road, it can be shown that a returning wave always causes a decrease in flow at the junction and therefore does not require a specific estimate.
After these fine estimates are achieved, we follow the general strategy of \cite{dellemonache2018CMS}
to prove existence of solutions.

\medskip 

The paper is organized as follows. In section \ref{sec:GSOM}, we introduce the basic definitions
of the theory of Second Order traffic Models (GSOM) on a single road, and extend them to a network in section \ref{sec:GSOMnetwork}. In section \ref{sec:NinM} we define our Adaptive Priority
Riemann solver for a generic junction with $n$ incoming roads and $m$ outgoing roads, and in section \ref{sec:2in1} we illustrate the algorithm for the particular case of a merge (a network composed of two incoming roads and one outgoing road). Section \ref{sec:TVbounds} is devoted to bounds on the total variation of the flux over the entire $n$ in $m$ network, and in particular, section \ref{sec:fluxRet} studies the variation of the flux due to returning waves at the junction.  
In section \ref{sec:theoEx} we prove the existence of solutions to Cauchy problems.
Appendix A collects the proof of the main theorem of the paper.

\section{Generic Second Order traffic Models}\label{sec:GSOM}
In this section we present the traffic model and we collect the main definitions used throughout the work. We deal with the Generic Second Order Models (GSOM) \cite{LebacqueMammarHajSalem2007}, a family of macroscopic traffic models which are described by a first order Lighthill-Whitham-Richards (LWR) model \cite{LighthillWhitham1955,Richards1956} with variable fundamental diagrams. Such models are defined by
\begin{align}
	\begin{split}
		&\begin{cases}
			\det\rho+\dex(\rho v) = 0\\
			\det w+v\dex w = 0\\
		\end{cases}\\
		&\disp \text{with } v=V(\rho,w),
	\end{split}
	\label{eq:GSOM1}
\end{align}
where $\rho(x,t)$, $v(x,t)$ and $w(x,t)$ represent the density, the speed and a property of vehicles advected by the flow, respectively, and $V$ is a specific velocity function. The first equation of \eqref{eq:GSOM1} is the conservation of vehicles, the second one is the advection of the attribute of drivers, which defines their driving aptitude by means of different fundamental diagrams. Indeed, the variable $w$ identifies the flux curve $Q(\rho,w)$ and thus the speed of vehicles $V(\rho,w)=Q(\rho,w)/\rho$ which characterizes the behavior of drivers. 
System \eqref{eq:GSOM1} is written in conservative form as
\begin{align}
	\begin{split}
		&\begin{cases}
			\det\rho+\dex(\rho v) = 0\\
			\det y+\dex(y v) = 0\\
		\end{cases}\\
		&\disp \text{with } v = V\Big(\rho,\frac{y}{\rho}\Big),
	\end{split}
	\label{eq:GSOM2}
\end{align}
where $y=\rho w$
denotes the total property of vehicles. 

The flux function $Q(\rho,w)$ and the velocity function $V(\rho,w)=Q(\rho,w)/\rho$ are assumed to satisfy the following properties.
\begin{enumerate}[label=(H\arabic*),ref=\textup{(H\arabic*)}]
	\item\label{q1} $Q(0,w) = 0$ and $Q(\rhom(w),w) = 0$ for each $w\in[\wl,\wr]$, where $\rhom(w)$ is the maximal density of vehicles for $Q(\cdot,w)$ and $[\wl,\wr]$ is the domain of $w$, for suitable $\wl$ and $\wr$.
	\item\label{q2} $Q(\rho,w)$ is strictly concave with respect to $\rho$, i.e. $\frac{\de^{2}Q}{\de\rho^{2}}<0$.
	\item\label{q3} $Q(\rho,w)$ is non-decreasing with respect to $w$, i.e. $Q_{w}\geq0$.
	\item\label{v1} $V(\rho,w)\geq0$ for each $\rho$ and $w$.
	\item\label{v2} $V(\rho,w)$ is strictly decreasing with respect to $\rho$, i.e. $V_{\rho}<0$ for each $w$.
	\item\label{v3} $V(\rho,w)$ is non-decreasing with respect to $w$, i.e. $V_{w}\geq0$.
\end{enumerate}
Note that property \ref{v2} is a consequence of \ref{q2}. Indeed, for $\rho\neq0$ 
\begin{equation}\label{def:f}
	\vr(\rho,w)=\frac{\rho Q_{\rho}(\rho,w)-Q(\rho,w)}{\rho^{2}}=: \frac{f(\rho, w)}{\rho^{2}}<0 \quad\text{for each $w$}
\end{equation}
since $f(0, w)=0$ for all $w$ and $\partial_\rho f(\rho, w)=-\frac{\de^{2} Q}{\de\rho^{2}}>0$ by \ref{q2}. Property \ref{v3} follows by \ref{q3} trivially. Properties \ref{q1} and \ref{q2} imply that the flux curve $Q(\cdot,w)$ has a unique point of maximum for any $w$. We denote by $\sigma(w)$ the critical density, i.e. the density value where the flux attains its maximum $\qmax(w)$. 
Moreover, for any $\rho$ there exists a unique $\rhot(w)$ such that $Q(\rho,w)=Q(\rhot(w),w)$.

Denoting $\cons=(\rho,y)^{T}$ and $F(\cons) = (\rho v, yv)^{T}$, the GSOM model \eqref{eq:GSOM2} can be rewritten as
\begin{equation}\label{eq:GSOMsist}
	\det \cons+\dex F(\cons) = 0.
\end{equation}
We are therefore interested in describing the solution to Cauchy problem:
\begin{equation}\label{eq:cauchySist}
	\begin{cases}
		\det \cons+\dex F(\cons) = 0,\\
		\cons(x,0) = \cons_{0}(x).
	\end{cases}
\end{equation}
From the standard theory of conservation laws \cite{Bressan2000,Dafermos2000}, it is natural to work with weak solutions, defined below.
\begin{defn}\label{def:weak}
Let $\conszero\in \lunoloc(\R,\R^{2})$ and $T>0$. A function $\cons:\R\times[0,T]\to\R^{2}$ is a \emph{weak solution} to \eqref{eq:cauchySist} if $\cons$ is continuous as a function from $[0,T]$ into $\lunoloc$ and if, for every test function $\varphi\in C^{1}$ with compact support in the set $\R\times(-\infty,T)$, it holds
\begin{equation*}
	\int_{0}^{T}\int_{\R}(\cons\det\varphi+F(\cons)\dex\varphi)dxdt+\int_{\R}\conszero(x)\varphi(x,0)dx=0.
\end{equation*} 
\end{defn}
Since there are infinitely many weak solutions to \eqref{eq:cauchySist}, we introduce the classical selection principle, which leads to the entropy solution \cite{Bressan2000}, the physically relevant one.
\begin{defn}
A $C^{1}$ function $\entropy:\R^{2}\to\R$ is an \emph{entropy} associated to \eqref{eq:GSOMsist} if it is convex and there exists a $C^{1}$ function $\entFlux:\R^{2}\to\R$ such that
\begin{equation}\label{eq:entropiaFlusso}
	D\entropy(\cons)DF(\cons) = D\entFlux(\cons)
\end{equation}
for every $\cons\in\R^{2}$. The function $\entFlux$ is called an \emph{entropy flux} for $\entropy$. The pair $(\entropy,\entFlux)$ is called \emph{entropy-entropy flux} pair.
\end{defn}
\begin{defn}\label{def:entropy}
A weak solution $\cons=\cons(x,t)$ to \eqref{eq:cauchySist} is called \emph{entropy admissible} if, for every $C^{1}$ test function $\varphi\geq0$ with compact support in $\R\times[0,T)$ and for every entropy-entropy flux pair $(\entropy,\entFlux)$, it holds
\begin{equation}\label{eq:entropyDis}
	\int_{0}^{T}\int_{\R}(\entropy(\cons)\varphi_{t}+\entFlux(\cons)\varphi_{x})dxdt\geq0.
\end{equation}
\end{defn}


\bigskip

Before introducing the problem on road networks, we discuss the Riemann problem on a single road for GSOM. We then consider system \eqref{eq:GSOMsist} together with piecewise constant initial data which has a single discontinuity in the domain of interest.
The solution to Riemann problems is given by a combination of elementary waves, i.e. shocks, rarefaction waves and contact discontinuities.
The study of the Jacobian of $F(\cons)$ shows that system \eqref{eq:GSOMsist} is strictly hyperbolic with two distinct eigenvalues for $\rho\neq0$
\begin{align}
	\luno(\cons) &= V(\cons)+\rho V_{\rho}(\cons)\label{eq:l1},\\
	\ldue(\cons) &= V(\cons)\label{eq:l2}
\end{align} 
which coincide if and only if $\rho=0$. The eigenvectors associated to the eigenvalues are $\gamma_{1}(\cons) = (\rho,y)^{T} \qquad\text{and}\qquad \gamma_{2}(\cons) = (-V_{y},V_{\rho})^{T}$,  and thus $\luno$ is genuinely nonlinear ($\nabla\luno\cdot\gamma_{1}\neq0$) and $\ldue$ is linearly degenerate ($\nabla\ldue\cdot\gamma_{2}=0$). The waves associated to the first eigenvalue
$\lambda_1$ are then shock or rarefaction waves, while those associated to $\lambda_2$ are contact discontinuities.
The Riemann invariants are 
\begin{equation}\label{eq:riemann-inv}
	z_{1}(\cons)=y/\rho \qquad\text{and}\qquad z_{2}(\cons)=V(\cons).
\end{equation}
From now on we will use the $(\rho,w)$ variables. Thus, we set
$$
 U=(\rho,w), \quad z_1(U)=w, \quad z_2(U)=V(U).
$$
The first eigenvalue in \eqref{eq:l1}, in $(\rho,w)$ variables is $\luno(\rho,w)=V(\rho,w)+\rho\vr(\rho,w)=Q_{\rho}(\rho,w)$ and by properties \ref{q1} and \ref{q2}  it is such that $\luno\geq0$ for $\rho\leq\sigma(w)$ and $\luno<0$ for $\rho>\sigma(w)$. Hence, for each $w$ the 1-shocks and 1-rarefaction waves have non-negative speed for $\rho\leq\sigma(w)$ and negative speed for $\rho>\sigma(w)$. The second eigenvalue in \eqref{eq:l2} verifies $\ldue(\rho,w)=V(\rho,w)\geq0$ by definition of $V$, thus the speed of the 2-contact discontinuities is always non-negative.

Given two generic left and right states $U^-=(\rho^-,w^-)$ and $U^+=(\rho^+,w^+)$, the solution to Riemann problem is composed of three states: left $U^-=(\rho^-,w^-)$, middle $U^*=(\rho^*,w^*)$ and right $U^+=(\rho^+,w^+)$.
\begin{defn}[$\rho$-waves and $w$-waves]
 We will refer to \emph{$\rho$-waves} as the $1$-wave between the left and middle state $(U^-,U^*)$ ($w$ is conserved and $\rho$ changes), and to \emph{$w$-waves} as the $2$-wave between the middle and right state $(U^*,U^+)$ ($\rho$ and $w$ change and the velocity $V$ is conserved).   
\end{defn}
We have the following:
\begin{itemize}
\item between $U^-$ e $U^*$ the $\rho$-wave is such that $ w^-=w^* $
and, if $\rho^-<\rho^*$ the wave is a shock with speed 
$$
s = \frac{Q(\rho^*,w^*)-Q(\rho^-,w^-)}{\rho^*-\rho^-},
$$
if $\rho^->\rho^*$ the wave is a rarefaction such that $\lambda_1(U^-)<\lambda_1(U^*)$; 
\item between $U^*$ and $U^+$ the $w$-wave travels with velocity $V$ such that $V(\rho^*,w^*)=V(\rho^+,w^+)$.

\end{itemize}
\begin{lemma}\label{lem:orderV}
Suppose that $V(\rho^*, w^*)=V(\rho^+, w^+)$. Then, the following holds
\begin{equation}
\rho^*-\rho^+= - \frac{\de_w V (\rho^+, \tilde w)}{\de_\rho V(\tilde \rho, w^*)} (w^*-w^+).
\end{equation}
In particular, 
\begin{itemize}
\item if $w^*< w^+$, then $\rho^*<\rho^+$; 
\item if $w^*\ge  w^+$, then $\rho^*\ge \rho^+$.
\end{itemize}
\end{lemma}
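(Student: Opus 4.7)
The plan is to recognize this as a direct application of the mean value theorem in each variable separately, combined with the sign information on the partial derivatives of $V$ provided by the standing hypotheses \ref{v2} and \ref{v3}.

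First I would rewrite the hypothesis $V(\rho^*,w^*)=V(\rho^+,w^+)$ as the vanishing of a telescoping difference,
\begin{equation*}
0 = V(\rho^*,w^*)-V(\rho^+,w^+) = \bigl[V(\rho^*,w^*)-V(\rho^+,w^*)\bigr] + \bigl[V(\rho^+,w^*)-V(\rho^+,w^+)\bigr],
\end{equation*}
splitting the variation of $V$ between the two states into a pure $\rho$-variation (at fixed $w=w^*$) and a pure $w$-variation (at fixed $\rho=\rho^+$).

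Next I would apply the one-dimensional mean value theorem to each bracket. Since $V$ is $C^1$ in each of its arguments, there exist $\tilde\rho$ between $\rho^+$ and $\rho^*$ and $\tilde w$ between $w^+$ and $w^*$ such that
\begin{equation*}
V(\rho^*,w^*)-V(\rho^+,w^*) = \partial_\rho V(\tilde\rho,w^*)(\rho^*-\rho^+),\qquad V(\rho^+,w^*)-V(\rho^+,w^+) = \partial_w V(\rho^+,\tilde w)(w^*-w^+).
\end{equation*}
Inserting these into the telescoping identity and solving for $\rho^*-\rho^+$ yields the claimed formula, provided the denominator is nonzero; but \ref{v2} ensures $\partial_\rho V(\tilde\rho,w^*)<0$ strictly (here one uses $\rho>0$, which is implicit in the GSOM setting), so the division is legitimate.

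Finally, for the sign statement I would simply read off the coefficient $-\partial_w V(\rho^+,\tilde w)/\partial_\rho V(\tilde\rho,w^*)$: by \ref{v3} the numerator is $\ge 0$ and by \ref{v2} the denominator is $<0$, so the coefficient is nonnegative, whence $\rho^*-\rho^+$ and $w^*-w^+$ share sign. The only minor subtlety to flag is the degenerate case $\partial_w V(\rho^+,\tilde w)=0$, in which the formula gives $\rho^*=\rho^+$ regardless of whether $w^*<w^+$ or $w^*\ge w^+$; the dichotomy in the lemma should be read with weak inequalities on the $\rho$-side (as written), and this case is consistent with both alternatives stated. No step looks like a real obstacle — the lemma is essentially an immediate corollary of the MVT once the telescoping decomposition is written down.
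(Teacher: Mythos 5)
Your proposal is correct and follows essentially the same route as the paper: the identical telescoping decomposition $V(\rho^*,w^*)-V(\rho^+,w^*)+V(\rho^+,w^*)-V(\rho^+,w^+)$, the Mean Value Theorem applied to each piece, and the sign conclusion from \ref{v2} and \ref{v3}. Your remark on the degenerate case $\partial_w V(\rho^+,\tilde w)=0$ is a small extra care the paper does not spell out, but it changes nothing substantive.
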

\begin{proof}
By means of the Mean Value Theorem, one has that
\begin{align*}
0=V(\rho^*, w^*)-V(\rho^+, w^+)&=V(\rho^*, w^*)-V(\rho^+, w^*)+V(\rho^+, w^*) -V(\rho^+, w^+)\\
& = \de_\rho V(\tilde \rho, w^*) (\rho^*-\rho^+) + \de_w V (\rho^+, \tilde w) (w^*-w^*).
\end{align*}
The proof follows since $ \de_\rho V(\tilde \rho, w^*)<0$ by \ref{v2} and $ \de_w V (\rho^+, \tilde w)\ge 0$ by \ref{v3}.
\end{proof}
\begin{lemma}\label{lem:monotone}
Suppose that $V(\rho^-, w^-)=V(\rho^*, w^*)$ and $V(\rho^+, w^+)=V(\hat \rho, \hat w)$. 
It holds that $\rho^-< \rho^+$ if and only if $\rho^*< \hat \rho$. 
\end{lemma}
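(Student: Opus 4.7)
The plan is to reduce the biconditional to two applications of property \ref{v2}, i.e.\ the strict monotonicity of $V(\cdot,w)$ with respect to $\rho$. The two hypothesis equalities $V(\rho^-,w^-)=V(\rho^*,w^*)$ and $V(\rho^+,w^+)=V(\hat\rho,\hat w)$ each express that a pair of states lies on a common level curve of $V$. For these two identities to combine into the desired ordering statement, one relies on the $w$-alignment implicit in the context of application of the lemma, namely $w^-=w^+$ and $w^*=\hat w$; this is the situation in which $U^-\to U^*$ and $U^+\to\hat U$ are two $w$-waves joining the same two $w$-levels, and is what makes each $V$-identity a comparison on a single strictly monotone slice of $V$.

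Under this alignment I would prove the biconditional through a chain of three equivalences. First, by \ref{v2} at the common value $w^-=w^+$, the inequality $\rho^-<\rho^+$ is equivalent to $V(\rho^-,w^-)>V(\rho^+,w^+)$. Second, the two hypothesis identities rewrite this as $V(\rho^*,w^*)>V(\hat\rho,\hat w)$. Third, \ref{v2} at the common value $w^*=\hat w$ converts the latter into $\rho^*<\hat\rho$. Concatenating the three equivalences yields the claimed biconditional.

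The argument uses nothing beyond \ref{v2} and the hypothesis, so there is no real technical obstacle. The main conceptual point — and the step most likely to cause confusion when invoking the lemma later — is to keep track of the $w$-alignment between the two pairs: it is precisely what reduces each $V$-identity to a comparison along a single strictly decreasing profile $V(\cdot,w)$, so that the $\rho$-orderings can be transported cleanly through the two $V$-identities. Once the alignment is fixed, the proof is a direct application of the strict monotonicity \ref{v2} and no further structural assumption on the fundamental diagram is needed.
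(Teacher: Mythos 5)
Your proof is correct and follows essentially the same route as the paper's: two applications of the strict monotonicity \ref{v2} transported through the two $V$-identities. Your explicit remark that the argument requires the alignment $w^-=w^+$ and $w^*=\hat w$ is well taken — the paper's proof suppresses the second argument as ``$\cdot$'' and leaves this hypothesis implicit, even though without it the statement as written would fail; the alignment does hold in every invocation of the lemma (e.g.\ in Lemma \ref{lemma:deltaQ} and Proposition \ref{prop:ret-wave-in}).
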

\begin{proof}
Being $\rho^-< \rho^+$ and $V_\rho (\cdot, \cdot)<0$, then $V(\rho^-, \cdot)> V(\rho^+, \cdot),$ yielding $V(\rho^*, \cdot)> V(\hat \rho, \cdot)$. The proof ends using again the monotonicity of $V(\rho, \cdot)$.
\end{proof}

\begin{lemma}\label{lemma:VelOndeRho}
When both a $\rho$-wave and a $w$-wave travel with positive speed and the $\rho$-wave is behind the $w$-wave, they cannot interact ($\rho$-waves are slower than $w$-waves).
\end{lemma}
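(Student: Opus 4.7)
The plan is to show that every $\rho$-wave with positive speed is strictly slower than $V(\rho^\ast,w^\ast)$, where $U^\ast$ is its right state, and then observe that the adjacent $w$-wave travels exactly at speed $V(\rho^\ast,w^\ast)=V(\rho^+,w^+)$. Since the $\rho$-wave sits behind the $w$-wave, a strict inequality between their speeds rules out interaction.

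The first step is to label the $\rho$-wave by its left and right states $U_L=(\rho_L,w)$ and $U_R=(\rho_R,w)$ (they share $w$) and split into the shock and rarefaction cases. For a rarefaction, the characteristic speeds across the fan range between $\luno(U_L)$ and $\luno(U_R)$, so it is enough to bound the largest speed $\luno(U_R)=V(\rho_R,w)+\rho_R \vr(\rho_R,w)$. Using $\vr<0$ from (H5) and $\rho_R>0$, one immediately gets $\luno(U_R)<V(\rho_R,w)$.

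For a shock one has positive speed only on the increasing branch of $Q(\cdot,w)$, forcing $\rho_L<\rho_R$, and the Rankine--Hugoniot formula gives
\[
s=\frac{Q(\rho_R,w)-Q(\rho_L,w)}{\rho_R-\rho_L}.
\]
Comparing with $V(\rho_R,w)=Q(\rho_R,w)/\rho_R$ and clearing denominators (both positive) reduces the inequality $s<V(\rho_R,w)$ to $V(\rho_R,w)<V(\rho_L,w)$, which holds strictly by (H5) since $\rho_L<\rho_R$. Equivalently, one can view $s$ as the average of $Q_\rho(\cdot,w)$ on $[\rho_L,\rho_R]$ and $V(\rho_R,w)$ as the average of $Q_\rho(\cdot,w)$ on $[0,\rho_R]$; strict concavity of $Q$ makes $Q_\rho$ strictly decreasing, which gives the strict comparison of the two averages.

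Finally, the $w$-wave following the $\rho$-wave connects $U^\ast=U_R$ to $U^+$ with conserved velocity, so its speed is $\ldue=V(\rho^\ast,w^\ast)=V(\rho^+,w^+)$. Combining the two cases, the $\rho$-wave speed is strictly smaller than this common value, hence the trailing $\rho$-wave can never catch up with the leading $w$-wave. The only delicate point is ensuring strict, not weak, inequality in the shock case, which is what (H2)--(H5) deliver.
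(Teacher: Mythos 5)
Your proposal is correct and follows essentially the same route as the paper's proof: in the rarefaction case you bound the fastest edge of the fan by $\lambda_1(U_R)=V(\rho_R,w)+\rho_R V_\rho(\rho_R,w)<V(\rho_R,w)$ using \ref{v2}, and in the shock case you use positivity of the speed to get $\rho_L<\rho_R$ and reduce the Rankine--Hugoniot comparison to the monotonicity of $V$ in $\rho$, exactly as in the paper. The alternative reading of the shock inequality via averages of $Q_\rho$ is a nice extra but does not change the argument.
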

\begin{proof}
Suppose to have a $\rho$-wave and a $w$-wave traveling with positive speed one behind the other, and separating the three states $U_1=(\rho_1,w_1)$, $U_2=(\rho_2,w_2)$, $U_3=(\rho_3,w_3)$. By definition of $\rho$-wave $w_1=w_2$ and by definition of $w$-wave $V(\rho_2,w_2=w_1)=V(\rho_3,w_3)$. We want to prove that the speed of the $\rho$-wave is less that the speed of the $w$-wave $\lambda_2(\rho_3,w_3) = V(\rho_3,w_3)=V(\rho_2,w_2=w_1)$.
\begin{itemize}
\item If the $\rho$-wave is a shock with positive speed, then $\rho_1 <\rho_2$ and by contradiction
\begin{equation*}
\mbox{shock speed} = \frac{\rho_2V(\rho_2,w_1)-\rho_1V(\rho_1,w_1)}{\rho_2-\rho_1} > V(\rho_2,w_1)=V(\rho_3,w_3) 
\end{equation*}
if and only if $V(\rho_1,w_1) < V(\rho_2, w_1)$. Since $V$ is decreasing in $\rho$ by hypothesis \ref{v2}, it would imply $\rho_1 >\rho_2$ which contradicts the hypothesis.
\item If the $\rho$-wave is a rarefaction, then since $V$ is decreasing in $\rho$
$$\lambda_1(\rho_2,w_1) =  V(\rho_2,w_1)+\rho_2\partial_\rho V = V(\rho_3,w_3) +\rho_2\partial_\rho V < V(\rho_3,w_3), $$
whence the thesis.
\end{itemize}
However, note that a $w$-wave traveling behind a $\rho$-wave can interact with it.
\end{proof}

\section{Road network}\label{sec:GSOMnetwork}
We recall now the main definitions concerning traffic models on a road network, and we refer to \cite{dellemonache2018CMS,GaravelloPiccoli2006,garavello2006AIHP,HoldenRisebro1995} for further details. Consider a junction $J$ with $n$ incoming and $m$ outgoing roads $I_{\road}=[a_{\road},b_{\road}]\subset\R$, $\road = 1,\dots,n+m$, possibly with $a_{\road}=-\infty$ and $b_{\road}=+\infty$. We define a network as a couple $(\edge,\vert)$ where $\edge$ is a finite collection of roads $I_{\road}$, and $\vert$ is a finite collection of junctions $J$.

\begin{defn}\label{def:sol-net}
A collection of functions $(\rho_{\road},y_{\road})\in C([0,+\infty);\lunoloc(I_{\road})^{2})$, $\road=1,\dots,n+m$, is a weak solution at $J$ if
\begin{itemize}
	\item For every $\road\in\{1,\dots,n+m\}$ the couple $(\rho_{\road},y_{\road})$ is an entropy admissible solution to \eqref{eq:GSOM2} on the road $I_{\road}$ in the sense of Definition \ref{def:entropy}.
	\item For every $\road\in\{1,\dots,n+m\}$ and for a.e. $t>0$ the function $x\mapsto (\rho_{\road}(x,t),y_{\road}(x,t))$ has a version with bounded total variation.
	\item For a.e. $t>0$, it holds 
	\begin{equation*}
		\sum_{i=1}^{n}Q(\rho_{i}(b_{i}-,t),w_{i}(b_{i}-,t))=\sum_{j=n+1}^{n+m}Q(\rho_{j}(a_{j}+,t),w_{j}(a_{j}+,t))
	\end{equation*}
	where $w_{\road}=y_{\road}/\rho_{\road}$ and $(\rho_{\road},y_{\road})$ is the version with bounded total variation of the previous point.
\end{itemize}
\end{defn}

We now focus on the Riemann problem at the junction: on each road $I_{\road}$, $\road=1,\dots,n+m$, we solve
\begin{equation}\label{eq:GSOMrete}
	\begin{dcases}
		\det\rho_{\road}+\dex(\rho_{\road} v_{\road}) = 0\\
		\det y_{\road}+\dex(y_{\road} v_{\road}) = 0\\
		(\rho_{\road}(x,0),y_{\road}(x,0)) = 
		\begin{cases}
			(\rho^{-},y^{-}) &\quad\text{for $x<x_{0}$}\\
			(\rho^{+},y^{+}) &\quad\text{for $x>x_{0}$,}
		\end{cases}
	\end{dcases}
\end{equation}
with $v_{\road} = V\left(\rho_{\road},y_{\road}/\rho_{\road}\right)$ and where either the left or right state is known. Depending on whether the road is incoming or outgoing, we have the following possibilities:
\begin{itemize}
	\item If $I_{i}$ is an incoming road at the junction then $x_{0}=b_{i}$ and only the left state $(\rho^{-},y^{-})$ is known. In this case we look for weak solutions of \eqref{eq:GSOMrete} such that the waves have non-positive speed.
	\item If $I_{j}$ is an outgoing road at the junction then $x_{0}=a_{j}$ and only the right state $(\rho^{+},y^{+})$ is known. In this case we look for weak solutions of \eqref{eq:GSOMrete} such that the waves have non-negative speed.
\end{itemize}

As mentioned above, we work with the couple of variables $(\rho,w)$. Occasionally, we will adopt the shortened notation
\begin{align}\label{eq:q-small}
q:=Q(\rho, w) \quad \text{for any} \quad U=(\rho, w).
\end{align}
\begin{defn}\label{def:rsolv}
A Riemann solver $\rsolv$ is a function
\begin{align*}
	\rsolv:([0,\rhom]\times[\wl,\wr])^{n+m}&\longrightarrow([0,\rhom]\times[\wl,\wr])^{n+m}\\
	\uV&\longmapsto\uVS
\end{align*}
such that

\begin{enumerate}
	\item $\disp\sum_{i=1}^{n}\qS_{i}=\sum_{j=n+1}^{n+m}\qS_{j}$, with $\qS_{i}=Q(\rhoS_{i},\wS_{i})$ and $\qS_{j}=Q(\rhoS_{j},\wS_{j})$.
	\item For every $i=1,\dots,n$ the Riemann problem \eqref{eq:GSOMrete} has initial datum
	\begin{equation*}
	(\rho_{i}(x,0),y_{i}(x,0)) = \begin{cases}
		(\rho_{i},\rho_{i}w_{i}) &\quad\text{for $x<b_{i}$}\\
		(\rhoS_{i},\rhoS_{i}\wS_{i}) &\quad\text{for $x>b_{i}$,}
		\end{cases}
	\end{equation*}
	and is solved with waves with negative speed. 
	\item For every $j=n+1,\dots,n+m$ the Riemann problem \eqref{eq:GSOMrete} has initial datum
	\begin{equation*}
	(\rho_{j}(x,0),y_{j}(x,0)) = \begin{cases}
		(\rhoS_{j},\rhoS_{j}\wS_{j}) &\quad\text{for $x<a_{j}$}\\
		(\rho_{j},\rho_{j}w_{j}) &\quad\text{for $x>a_{j}$,}
	\end{cases}
	\end{equation*}
	and is solved with waves with positive speed. 
	\item It satisfies the consistency condition 
	\begin{equation*}
		\rsolv(\rsolv\uV)=\rsolv\uV
	\end{equation*}
	for every $\uV\in([0,\rhom]\times[\wl,\wr])^{n+m}$.
\end{enumerate}
\end{defn}
We introduce the supply and demand functions to maximize flow at the junction. The \emph{supply} function $\supp{\rho}{w}$ is defined as
\begin{equation}\label{eq:supply}
\supp{\rho}{w} = \begin{cases}
\qmax(w) &\quad\text{if $\rho\leq\sigma(w)$}\\		
Q(\rho,w) &\quad\text{if $\rho>\sigma(w)$,}
\end{cases}
\end{equation}
while we define the \emph{demand} function $\dem{\rho}{w}$ as
\begin{equation}\label{eq:demand}
\dem{\rho}{w} = \begin{cases}
Q(\rho,w) &\quad\text{if $\rho\leq\sigma(w)$}\\
\qmax(w) &\quad\text{if $\rho>\sigma(w)$.}	
\end{cases}
\end{equation}

\subsection{Incoming roads}
Let us consider an incoming road at a junction. 
Only waves with negative speed are admissible. Since $\ldue\geq0$, we can only have a $\rho$-wave which can be a shock or a rarefaction. 
We fix a left state $\umeno=(\rhomeno,\wmeno)$ and look for the set of all admissible right states $\upiuS=(\rhopiuS,\wpiuS)$ that can be connected to $\umeno$ with waves with negative speed.
Along the $\rho$-waves the variable $w$ is conserved, therefore only the density $\rho$ changes. This case is analogous to the definition of admissible solutions on incoming roads for first order traffic models, see for instance \cite{GaravelloPiccoli2006}. 

\begin{prop}\label{prop:incoming}
Let $V$ be a velocity function that verifies properties  \ref{v1}-\ref{v3} and 
let $\umeno=(\rhomeno,\wmeno)$ be a left state on an incoming road.\\
If $\rhomeno=0$, then the only admissible right state is $\upiuS=\umeno$.\\ 
If $\rhomeno\neq0$, then the set of admissible right states $\upiuS=(\rhopiuS,\wpiuS)$ 
verifies $\wpiuS=\wmeno$ and
\begin{enumerate}
\item If $\rhomeno\leq\sigma(\wmeno)$, then $\rhopiuS\in\neg(\umeno)=\{\rhomeno\}\cup \big(\wrhomeno(\wmeno),\rhom(\wmeno)]$, where $\wrhomeno(\wmeno)$ is the density such that $Q(\wrhomeno(\wmeno),\wmeno)=Q(\rhomeno,\wmeno)$.
\item If $\rhomeno>\sigma(\wmeno)$, then $\rhopiuS\in\neg(\umeno)=[\sigma(\wmeno),\rhom(\wmeno)]$.
\end{enumerate}
Moreover, denoting by $d$ the demand function defined in \eqref{eq:demand},  it holds
\begin{equation}\label{eq:dem}
Q(\rhopiuS,\wpiuS)\leq \dem{\rhomeno}{\wmeno}.
\end{equation}
\end{prop}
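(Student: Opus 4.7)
The plan is to first rule out any $w$-wave between the middle state $\ustar$ and the right state $\upiuS$, then classify the admissible $\rho$-waves case by case using the sign of $\luno$, and finally read the demand bound \eqref{eq:dem} off the shape of $Q(\cdot,\wmeno)$.

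The first step relies only on the network admissibility convention: the $2$-contact discontinuity travels at speed $\ldue=V\geq 0$ by \ref{v1}, while a stationary jump at the junction is explicitly excluded. Hence $\ustar=\upiuS$ necessarily, and since the $\rho$-wave preserves $w$ we conclude $\wpiuS=\wmeno$. The degenerate case $\rhomeno=0$ is then immediate: by \ref{q1} we have $Q(\rhomeno,\wmeno)=0$, so any negative-speed shock to some $\rhopiuS>0$ would need $Q(\rhopiuS,\wmeno)<0$, which is ruled out, and a rarefaction is impossible since $\rho$ cannot drop below $0$.

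For $\rhomeno>0$ I would split into the two cases of the statement. Recalling $\luno(\rho,w)=Q_\rho(\rho,w)$, properties \ref{q1}--\ref{q2} give $\luno(\cdot,w)>0$ on $(0,\sigma(w))$ and $\luno(\cdot,w)<0$ on $(\sigma(w),\rhom(w))$. If $\rhomeno\leq\sigma(\wmeno)$, then a rarefaction ($\rhopiuS<\rhomeno$) has its front moving at $\luno(\upiuS)>0$, incompatible with non-positivity of the speed; a shock ($\rhopiuS>\rhomeno$) has speed $\bigl[Q(\rhopiuS,\wmeno)-Q(\rhomeno,\wmeno)\bigr]/(\rhopiuS-\rhomeno)$, which by concavity of $Q(\cdot,\wmeno)$ is strictly negative exactly when $\rhopiuS>\wrhomeno(\wmeno)$; including the trivial state $\rhopiuS=\rhomeno$ yields the claimed set. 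If $\rhomeno>\sigma(\wmeno)$, rarefactions with $\rhopiuS\in[\sigma(\wmeno),\rhomeno)$ have $\luno\leq 0$ throughout, and shocks with $\rhopiuS\in(\rhomeno,\rhom(\wmeno)]$ are automatically of negative speed because $Q(\cdot,\wmeno)$ is decreasing past $\sigma(\wmeno)$; combined with $\rhopiuS=\rhomeno$ this gives $\rhopiuS\in[\sigma(\wmeno),\rhom(\wmeno)]$.

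The demand inequality is then a direct consequence of \eqref{eq:demand}: in the first case $\dem{\rhomeno}{\wmeno}=Q(\rhomeno,\wmeno)$ and for every admissible $\rhopiuS$ we have $Q(\rhopiuS,\wmeno)\leq Q(\rhomeno,\wmeno)$, either trivially or by comparison with $Q(\wrhomeno(\wmeno),\wmeno)=Q(\rhomeno,\wmeno)$ and monotonicity past the maximum; in the second case $\dem{\rhomeno}{\wmeno}=\qmax(\wmeno)$, which trivially dominates $Q(\rhopiuS,\wmeno)$ on $[\sigma(\wmeno),\rhom(\wmeno)]$. The only mild subtlety I expect is the bookkeeping of open versus closed endpoints: $\rhopiuS=\wrhomeno(\wmeno)$ is a stationary shock and is therefore excluded (open), whereas $\rhopiuS=\sigma(\wmeno)$ corresponds to a rarefaction fan whose front merely touches zero speed, a continuous configuration rather than a standing discontinuity, and is hence kept in the closure (closed).
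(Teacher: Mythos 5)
Your proof is correct and follows essentially the same route as the paper's: rule out the $2$-wave because $\lambda_2=V\geq 0$ (and zero-speed jumps at the junction are excluded), then classify the admissible $\rho$-waves by the sign of $\lambda_1=Q_\rho$ and of the Rankine--Hugoniot speed using the concavity of $Q(\cdot,w^-)$. You merely spell out a few points the paper leaves implicit (the $\rho^-=0$ case and the explicit verification of \eqref{eq:dem}), so there is nothing substantive to add.
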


\begin{proof}
First assume 
$\rhomeno\neq0$.
If $\rhomeno\leq\sigma(\wmeno)$ 
to have $\lambda_1\leq 0$ there are two possibilities: either $\upiuS=\umeno$, or moving above the density value $\wrhomeno(\wmeno)>\sigma(\wmeno)$ by a jump with zero speed. Indeed, since $Q(\wrhomeno(\wmeno),\wmeno)=Q(\rhomeno,\wmeno)$, the Rankine-Hugoniot condition $s =\big(Q(\wrhomeno(\wmeno),\wmeno)-Q(\rhomeno,\wmeno)\big)/(\wrhomeno-\rhomeno)$ implies that the speed of the discontinuity $s$ is zero. In this case, excluding zero speed jumps we can move with a 1-shock with negative speed towards any right state $\upiuS$ with $\wpiuS=\wmeno$ and $\wrhomeno(\wmeno)<\rhopiuS\leq\rhom(\wmeno)$. If $\rhomeno=0$ then $\wrhomeno(\wmeno)=\rhom(\wmeno)$, therefore  the solution is $\upiuS=\umeno$.

If $\rhomeno>\sigma(\wmeno)$, every state $\upiuS$ with $\wpiuS=\wmeno$ and $\rhopiuS\in[\sigma(\wmeno),\rhom(\wmeno)]$ is connected to $\umeno$ with waves with negative speed. In particular, we have a 1-rarefaction wave if  $\rhopiuS\leq\rhomeno$ and a 1-shock if $\rhopiuS>\rhomeno$.
\end{proof}
\begin{remark} 
We allow to remain stationary in $U^-$ (no wave is generated at the junction), while we exclude non-physical vertical shocks with zero velocity, i.e. the solution $\hat\rho =\widetilde\rho^-$.
\end{remark}
\begin{defn}[good and bad datum]\label{def:InBadDatum}
    For every incoming road we say that a datum $(\rho^-,w^-)$ is a good datum if $\rho^-\in [\sigma(w^-),\rho^\text{max} (w^-)]$ and a bad datum otherwise.
\end{defn}
\begin{figure}[h!]
\centering
\normalsize
\includegraphics[]{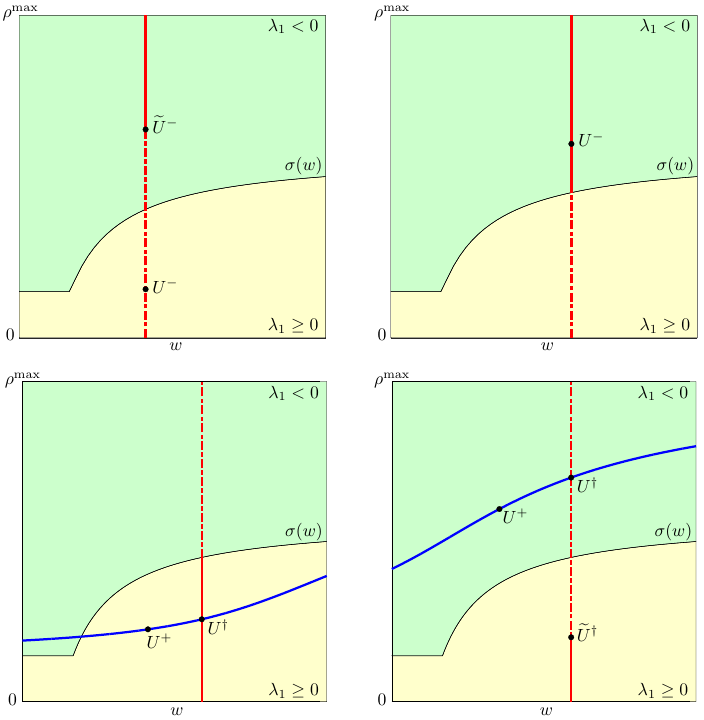}
\caption{The function graphs refer to the CGARZ model with the family of flux functions defined in \cite{FanSunPiccoliSeiboldWork2017}. Top: two possible configurations of incoming road states. The red solid line identifies the set of all possible right states $\upiuS$ reachable from the left state $\umeno$.
Bottom: two possible configurations of the state on an outgoing road. The red solid line identifies the set of possible left states $\umenoS$ reachable from the right state $\upiu$. 
The blue lines are the level lines of the second Riemann invariant $V(\rho,w)=V(\rho^+,w^+)$.}
\label{fig:inEout}
\end{figure}

\subsection{Outgoing roads}
Let us consider an outgoing road at a junction. We are interested in the waves with positive speed, thus we can have a 1-shock or 1-rarefaction wave and a 2-contact discontinuity.

We fix a right state $\upiu=(\rhopiu,\wpiu)$ and look for the set of all admissible left states $\umenoS=(\rhomenoS,\wmenoS)$  that can be connected to $\upiu$ with waves with positive speed.
We emphasize that along the 1-waves the $w$ is conserved and only the density $\rho$ changes. We therefore assume that it is given the value $\overline w$, which depends on the states of the incoming roads. 
On the other hand, along the 2-wave the velocity $V(\rho,w)$ is conserved. Then, the definition of the admissible states $\umenoS$ depends on the existence of an intermediate point 
$\umorto=(\rhomorto,\wmorto)$ such that $\wmorto=\overline w$ and $V(\rhomorto,\wmorto)=V(\rhopiu,\wpiu)$.

\begin{prop}\label{prop:Umorto}
Let $V$ be a velocity function that verifies properties  \ref{v1}-\ref{v3}. For a given value $\overline w$ and a given right state $\upiu=(\rhopiu,\wpiu)$ with associated velocity $\vpiu=V(\rhopiu,\wpiu)$, if $\vpiu\leq\vmax(\overline w)$ then there exists a unique point $\umorto=(\rhomorto,\wmorto)$ such that $\wmorto=\overline w$ and $V(\rhomorto,\wmorto)=\vpiu$.
\end{prop}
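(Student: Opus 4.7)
The plan is to reduce the statement to an elementary application of the Intermediate Value Theorem applied to the single-variable map $\rho \mapsto V(\rho,\overline w)$, with the monotonicity assumption \ref{v2} ensuring uniqueness. Since the target value $w^\dagger$ is prescribed by the condition $w^\dagger=\overline w$, the only unknown is $\rho^\dagger$, and the equation to solve is
\begin{equation*}
V(\rho^\dagger,\overline w)=v^+.
\end{equation*}

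I would proceed as follows. First, define the auxiliary function $\phi(\rho):=V(\rho,\overline w)$ on the closed interval $[0,\rho^{\mathrm{max}}(\overline w)]$. By the regularity of $V$ implicit in the hypotheses \ref{v1}--\ref{v3} (in particular the partial derivative condition $V_\rho<0$), the function $\phi$ is continuous, and by \ref{v2} it is strictly decreasing. Next, I would compute the endpoint values: at $\rho=0$ we have $\phi(0)=V(0,\overline w)=V^{\mathrm{max}}(\overline w)$, and at $\rho=\rho^{\mathrm{max}}(\overline w)$ the property \ref{q1} yields $Q(\rho^{\mathrm{max}}(\overline w),\overline w)=0$, hence $\phi(\rho^{\mathrm{max}}(\overline w))=Q(\rho^{\mathrm{max}}(\overline w),\overline w)/\rho^{\mathrm{max}}(\overline w)=0$.

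Combining the hypothesis $v^+\le V^{\mathrm{max}}(\overline w)$ with the lower bound $v^+\ge 0$ coming from \ref{v1}, the target value $v^+$ lies in the image interval $[0,V^{\mathrm{max}}(\overline w)]=\phi([0,\rho^{\mathrm{max}}(\overline w)])$. The Intermediate Value Theorem then produces a density $\rho^\dagger\in[0,\rho^{\mathrm{max}}(\overline w)]$ such that $\phi(\rho^\dagger)=v^+$, i.e. $V(\rho^\dagger,\overline w)=v^+$. Uniqueness follows immediately from the strict monotonicity of $\phi$: if two distinct values $\rho_1\neq \rho_2$ both satisfied $\phi(\rho_i)=v^+$, it would contradict $V_\rho<0$.

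I do not anticipate any real obstacle here, since the argument is a textbook IVT combined with strict monotonicity. The only point that deserves brief verification is the identification $\phi(0)=V^{\mathrm{max}}(\overline w)$, which is consistent with the convention that $V^{\mathrm{max}}(w)$ denotes the maximal velocity achieved on the fundamental diagram indexed by $w$, necessarily at $\rho=0$ in view of \ref{v2}. Once this identification is fixed, the hypothesis $v^+\le V^{\mathrm{max}}(\overline w)$ is exactly what is needed to place $v^+$ inside the range of $\phi$, and the conclusion of the proposition follows.
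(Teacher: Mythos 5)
Your proof is correct and follows essentially the same route as the paper's: both arguments rest on the strict monotonicity $V_\rho<0$ from \ref{v2} to get uniqueness, and on the hypothesis $v^+\leq V^{\mathrm{max}}(\overline w)$ to get existence. The only difference is presentational: you make the existence step fully explicit via the Intermediate Value Theorem with the endpoint values $V(0,\overline w)=V^{\mathrm{max}}(\overline w)$ and $V(\rho^{\mathrm{max}}(\overline w),\overline w)=0$, whereas the paper asserts solvability directly and then invokes the implicit function theorem (which it also uses to define the map $w\mapsto\rho(w;v^+)$ and its monotonicity, needed later but not for this statement).
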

\begin{proof}
If $\vpiu\leq\vmax(\overline w)$ then the equation $V(\rho,\overline w)=\vpiu$ admits a solution. 
By \ref{v2}, $\partial_\rho V<0$ and, by the implicit function theorem,
there exists $\rho(w;\vpiu)$ such that
$V(\rho(w;\vpiu),w)=\vpiu$. Moreover, \ref{v2}-\ref{v3} imply 
$$
\frac{d\rho}{dw}\,(w;\vpiu)=-\partial_w V/\partial_\rho V \geq 0.
$$
We then have $\wmorto=\overline w$ and $\rhomorto = \rho(\overline w;\vpiu)$.
\end{proof}

\begin{prop}\label{prop:outgoing}
Let $V$ be a velocity function that verifies properties  \ref{v1}-\ref{v3},
$\upiu=(\rhopiu,\wpiu)$ a right state on an outgoing road, and $\vpiu=V(\rhopiu,\wpiu)$ the associated velocity.
A left state $\umenoS=(\rhomenoS,\wmenoS)$, which can be connected to $\upiu$ with positive speed,
satisfies $\wmenoS = \overline w$ and the following
\begin{itemize}
\item[(i)] If $v^+ \leq \vmax(\overline w)$, let 
$\umorto=(\rhomorto,\wmorto)$ be the intersection point between 
the two level curves of the first and second Riemann invariant given by $\{w \, :\, w=\overline w\}$ and $\{(\rho,w)\,:\,V(\rho,w)=V(\rho^+,w^+)\}$ respectively, 
then $\wmorto=\overline w$ and
\begin{enumerate}
\item if $\rhomorto\leq\sigma(\wmorto)$, then $\rhomenoS\in\pos(\upiu)=[0,\sigma(\wmorto)]$;
\item if $\rhomorto>\sigma(\wmorto)$, then $\rhomenoS\in\pos(\upiu)=[0,\wrhomorto(\wmorto)\big)$, where $\wrhomorto(\wmorto)$ is the density such that $Q(\wrhomorto,\wmorto)=Q(\rhomorto,\wmorto)$. Note that we do not allow jumps with zero speed to occur at the junction, i.e. $\rhomenoS < \wrhomorto(\wmorto)$.
\end{enumerate}
\item[(ii)] If $v^+ > \vmax(\overline w)$ then $\rhomenoS\in\pos(\upiu)=[0,\sigma(\overline w)]$.
\end{itemize}
Moreover, denoting by $s$ the supply function defined in \eqref{eq:supply}, it holds 
\begin{equation}\label{eq:sup}
Q(\rhomenoS,\wmenoS)\leq \supp{\rhomorto}{\wmorto}.
\end{equation}
\end{prop}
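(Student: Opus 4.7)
The strategy is to use the fact that any state to the left of $U^+$ on an outgoing road must be connected to $U^+$ by waves of positive speed only, and to exploit the canonical wave structure of the GSOM Riemann problem: a $\rho$-wave (where $w$ is conserved) followed by a $w$-wave (where $V$ is conserved). Reading this structure from left to right as $\hat U^- \to U^\dagger \to U^+$, the constraint $\hat w^- = \bar w$ (imposed at the junction) forces $w^\dagger = \bar w$ through the $\rho$-wave, and the $w$-wave forces $V(U^\dagger) = v^+$. Thus the intermediate state is exactly the $U^\dagger$ produced by Proposition~\ref{prop:Umorto} whenever $v^+ \leq \vmax(\bar w)$, which opens case (i); the complementary case $v^+ > \vmax(\bar w)$ is treated separately since no such $U^\dagger$ exists on the curve $\{w=\bar w\}$.

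For case (i) the analysis reduces to characterizing those $\hat\rho^-$ for which the scalar 1-wave between $(\hat\rho^-,\bar w)$ and $(\rho^\dagger,\bar w)$ has non-negative speed; this is essentially the mirror of Proposition~\ref{prop:incoming} but with the sign of the speed reversed. When $\rho^\dagger \leq \sigma(\bar w)$, a Lax shock with $\hat\rho^- < \rho^\dagger$ has speed $(Q(\rho^\dagger,\bar w)-Q(\hat\rho^-,\bar w))/(\rho^\dagger-\hat\rho^-)>0$ by the strict monotonicity of $Q(\cdot,\bar w)$ on $[0,\sigma(\bar w)]$ granted by \ref{q1}--\ref{q2}, and a rarefaction with $\hat\rho^- > \rho^\dagger$ keeps $\lambda_1 \geq 0$ precisely when $\hat\rho^- \leq \sigma(\bar w)$; this yields $\hat\rho^- \in [0,\sigma(\bar w)]$. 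When $\rho^\dagger > \sigma(\bar w)$, strict concavity produces a unique $\widetilde\rho^\dagger \in (0,\sigma(\bar w))$ with $Q(\widetilde\rho^\dagger,\bar w) = Q(\rho^\dagger,\bar w)$; a Lax shock has positive speed iff $Q(\hat\rho^-,\bar w) < Q(\rho^\dagger,\bar w)$, i.e.\ iff $\hat\rho^- < \widetilde\rho^\dagger$, while the rarefaction option is killed by $\lambda_1(\hat\rho^-,\bar w) < \lambda_1(\rho^\dagger,\bar w) < 0$. The value $\hat\rho^- = \widetilde\rho^\dagger$ is excluded because the Rankine--Hugoniot speed vanishes, consistent with the convention (stated in the preceding remark) that forbids zero-speed jumps at the junction.

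For case (ii), the fact that $V(\cdot,\bar w) \leq \vmax(\bar w) < v^+$ everywhere on $[0,\rho^{\max}(\bar w)]$ means no point on the curve $\{w=\bar w\}$ has velocity $v^+$; the classical two-wave structure must degenerate through the vacuum state, which is the only value of $\rho$ where the $w$-variable becomes immaterial and the velocity constraint can be relaxed. Thus the $\rho$-wave is forced to connect $\hat U^-$ to the vacuum along $w = \bar w$ with non-negative speed, exactly as in the Lax argument above with $\rho^\dagger = 0$. All characteristic speeds remain non-negative iff $\hat\rho^- \in [0,\sigma(\bar w)]$, yielding the stated admissible set. The supply inequality~\eqref{eq:sup} is then immediate in all cases from \eqref{eq:supply}: in case (i.1) and case (ii) we have $s(\rho^\dagger,\bar w) = \qmax(\bar w)$ and $Q(\hat\rho^-,\bar w) \leq \qmax(\bar w)$ on $[0,\sigma(\bar w)]$, while in case (i.2) we have $s(\rho^\dagger,\bar w) = Q(\rho^\dagger,\bar w) = Q(\widetilde\rho^\dagger,\bar w)$ and the monotonicity of $Q(\cdot,\bar w)$ on $[0,\widetilde\rho^\dagger)$ gives the strict inequality.

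The main obstacle I anticipate is case (ii): there is no genuine middle state $U^\dagger$ to appeal to, and one has to argue convincingly that the vacuum is the only admissible ``bridge'' between the $\bar w$-curve and the $w^+$-curve, so that the admissible set collapses back to $[0,\sigma(\bar w)]$. A secondary subtlety is the systematic exclusion of zero-speed discontinuities, which is needed in sub-case (i.2) to give the half-open interval $[0,\widetilde\rho^\dagger)$ rather than a closed one and which is dictated by the non-physical character of stationary shocks at junctions.
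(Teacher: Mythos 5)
Your proposal is correct and follows essentially the same route as the paper: identify the intermediate state $\umorto$ on $\{w=\overline w\}$ via Proposition \ref{prop:Umorto} when $\vpiu\leq\vmax(\overline w)$, analyze the sign of the $1$-wave speed between $\umenoS$ and $\umorto$ by splitting on $\rhomorto\lessgtr\sigma(\overline w)$ (excluding the zero-speed jump at $\wrhomorto$), and reduce the case $\vpiu>\vmax(\overline w)$ to $\rhomorto=0$. Your explicit verification of the supply bound \eqref{eq:sup} is slightly more detailed than the paper's, which leaves that step implicit.
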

\begin{proof}
If $\vpiu\leq\vmax(\overline w)$, by Proposition \ref{prop:Umorto} there exists a unique point $\umorto$ such that $\wmorto = \overline w$ and $V(\rho,\wmorto)=\vpiu$. Thus,
if $\rhomorto\leq\sigma(\wmorto)$, then every state $\umenoS$ with $\wmenoS=\overline w$ and $\rhomenoS\in[0,\sigma(\wmorto)]$ 
can be connected to $\umorto$ by waves with positive speed (Figure \ref{fig:inEout} bottom-left). In particular we have a 1-rarefaction wave if  $\rhomorto\leq\rhomenoS$ and a 1-shock if $\rhomorto>\rhomenoS$. Then, $\umorto$ is connected to $\upiu$ by a 2-contact discontinuity which has positive speed. 

If $\rhomorto>\sigma(\wmorto)$, we move by a jump with positive speed to the density $\rhomenoS<\wrhomorto(\wmorto)$. In this case, a 1-rarefaction connects to an intermediate state $\umenoS$ with $\wmenoS=\wmorto$ and $0\leq\rhomenoS<\wrhomorto(\wmorto)$, then a 2-contact discontinuity connects to $\upiu$.

Otherwise, if $\vpiu>\vmax(\overline w)$ then the equality $V(\rho,\overline w)=\vpiu$ can not hold. It holds $\rhomorto = 0$ and the admissible left state $\rhomenoS$ has to be in $[0,\sigma(\overline w)]$. 
\end{proof}

To summarize, we denote 
\begin{equation}\label{eq:rhomorto}
	\rhomorto(\overline w;\vpiu) = \begin{cases}
		\rho(\overline w;\vpiu) &\text{if $\vpiu\leq\vmax(\overline w)$}\\
		0 &\text{if $\vpiu>\vmax(\overline w)$}
	\end{cases}
\end{equation}
where $\rho(\cdot;\vpiu)$ is the implicit function given by the equation $V(\rho,w)=\vpiu$, which is well defined as stated in Proposition \ref{prop:Umorto}.

We conclude this section showing that the situation where a $w$-wave with zero speed is emanated from the junction J cannot happen on outgoing roads: as the following result points out, in that case the $\rho$-wave emanated from the junction has non-positive speed (not admissible on outgoing roads).
\begin{lemma}\label{lem:casea}
Let $U^-=(\rho^-, w^-)$ and $\umorto=(\rhomorto,\wmorto)$ be respectively the left and the right state of a $\rho$-wave and let $\umorto=(\rhomorto,\wmorto)$ and $U^+=(\rho^+, w^+)$ be the left and the right state of a $w$-wave, both emanated from the junction $J$ at time $\bar t>0$. Suppose that the $w$-wave has zero speed $\lambda_2(\umorto)=\lambda_2(U^+)=0$. Then, the $\rho$-wave is a shock with non-positive speed $\lambda_1\le 0$. 
\end{lemma}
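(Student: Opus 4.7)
The plan is to deduce from $\lambda_2(\umorto)=V(\umorto)=0$ exactly what $\rhomorto$ must be, then use the fact that $w$ is preserved across the $\rho$-wave to force $\rho^- \le \rhomorto$, and finally compute the shock speed explicitly.

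First I would observe that $V(\rhomorto,\wmorto)=0$ together with \ref{v1} (non-negativity) and \ref{v2} (strict decrease in $\rho$) forces $\rhomorto=\rhom(\wmorto)$: indeed, if we had $\rhomorto<\rhom(\wmorto)$, then by \ref{v2} we would get $V(\rho,\wmorto)<V(\rhomorto,\wmorto)=0$ for every $\rho\in(\rhomorto,\rhom(\wmorto)]$, contradicting \ref{v1}. In particular $Q(\rhomorto,\wmorto)=\rhomorto\,V(\rhomorto,\wmorto)=0$.

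Next I would invoke the fact that $w$ is conserved along the $\rho$-wave, so $w^-=\wmorto$, which gives $\rhomorto=\rhom(w^-)$. Since $\rho^-\in[0,\rhom(w^-)]$, necessarily $\rho^-\le\rhomorto$. If $\rho^-=\rhomorto$ there is no actual $\rho$-wave, so we may assume $\rho^-<\rhomorto$; by the classification recalled before Lemma \ref{lem:orderV}, the connection from $U^-$ to $\umorto$ is then a shock.

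Finally, using the Rankine–Hugoniot condition together with $Q(\rhomorto,\wmorto)=0$, the shock speed is
\begin{equation*}
\lambda_1 \;=\; \frac{Q(\rhomorto,\wmorto)-Q(\rho^-,w^-)}{\rhomorto-\rho^-}\;=\;-\,\frac{Q(\rho^-,w^-)}{\rhomorto-\rho^-}\;\le\;0,
\end{equation*}
since the denominator is positive and $Q(\rho^-,w^-)\ge 0$ by \ref{q1}. There is really no ``hard part'' here: the only thing to be careful about is the borderline identification $\rhomorto=\rhom(\wmorto)$, which relies essentially on \ref{v1}–\ref{v2}, and the possibility of the degenerate case $\rho^-=\rhomorto$ (no wave), which is consistent with the conclusion $\lambda_1\le 0$.
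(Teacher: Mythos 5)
Your proof is correct and follows essentially the same route as the paper's: the zero speed of the $w$-wave forces $\rhomorto=\rhom(\wmorto)$ via \ref{v1}--\ref{v2}, conservation of $w$ across the $\rho$-wave identifies $\rhomorto=\rhom(w^-)$ so that $Q(\rhomorto,\wmorto)=0$, and Rankine--Hugoniot then gives the non-positive shock speed (you even handle the degenerate case $\rho^-=\rhomorto$, which the paper dismisses as obvious). The only nitpick is that the non-negativity of $Q(\rho^-,w^-)$ in the final step follows from $Q=\rho V$ together with \ref{v1}, as the paper argues, rather than from \ref{q1}, which only prescribes the boundary values $Q(0,w)=Q(\rhom(w),w)=0$.
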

\begin{proof}
Since $\umorto=(\rhomorto,\wmorto)$ and $U^+=(\rho^+, w^+)$ are the left and the right states of a $w$-wave with vanishing speed $\lambda_2(\rho^+, w^+)= V (\rho^+, w^+)=V(\rhomorto, \wmorto)=0$, recalling by \ref{v2} that $V_\rho < 0$ for every $w$, it follows that $\rhomorto=\rho^\text{max}(\wmorto)$ and $\rho^+= \rho^\text{max}(w^+)$. Moreover, being $U^-=(\rho^-, w^-)$ and $\umorto=(\rhomorto,\wmorto)$ the left and right state of a $\rho$-wave, it follows that $w^-=\wmorto$. This yields that $\rhomorto=\rho^\text{max}(w^-)$, so that the right state of the $\rho$-wave (of left state  $U^-=(\rho^-, w^-)$) is given by $(\rho^\text{max}(w^-), w^-)$. By property \ref{q1}, it follows that $Q(\rho^\text{max}(w^-), w^-)=0$. Moreover, since obviously $\rho^\text{max}(w^-)> \rho^-$, then the $\rho$-wave is a shock and its speed is given by $s=\frac{Q(\rho^\text{max}(w^-), w^-)-Q(\rho^-, w^-)}{\rho^\text{max}(w^-)-\rho^-}=-\frac{Q(\rho^-, w^-)}{\rho^\text{max}(w^-)-\rho^-}=-\frac{\rho^- V (\rho^-, w^-)}{\rho^\text{max}(w^-)-\rho^-} \le 0$ as $V(\cdot, \cdot) \ge 0$ by \ref{v1}.
\end{proof}

\begin{remark} By point (i)-2 in Proposition \ref{prop:outgoing} and Lemma \ref{lem:casea} we have that on an outgoing road a $w$-wave generated at the junction is always followed by a $\rho$-wave. In fact, as for incoming roads, no vertical shocks can occur at the junction.
\end{remark}

\begin{defn}[good and bad datum]\label{def:OutBadDatum}
For every outgoing road we say that a datum $(\rho^+,w^+)$ is a good datum if $\rhomorto(\overline w;v^+)\in[0,\sigma(\overline w)]$ for a given value $\overline w$, and a bad datum otherwise.
\end{defn}
 



\section{The Adapting Priority Riemann Solver for junctions with n incoming and m outgoing roads}\label{sec:NinM}
In this section, we introduce the Riemann Solver to define the solution in the general case of a junction with $n$ incoming roads and $m$ outgoing roads. We have $n$ left states $U_{i}$ and $m$ right states $U_{j}$, and our aim is to find $\uS_{i}$ and $\uS_{j}$ for $i=1,\dots,n$ and $j=n+1,\dots,n+m$. In order to determine which incoming road has the priority of sending vehicles with respect to the others, we introduce a priority vector 
$$(\puno,\dots,p_{n}) \quad\mbox{such that}\quad p_i\geq 0 \quad\mbox{and}\quad \sum_{i=1}^{n}p_{i}=1.$$ 
If we have $p_{i_{1}}=\dots=p_{i_{\kappa}}=0$ for $\kappa$ distinct indexes $i_{1},\dots,{i_{\kappa}}$, then no vehicles from these roads cross the junction, and thus we reduce to the $(n-\kappa)\to m$ case.
Then, we define the matrix of distribution
\begin{equation}\label{eq:Amatrix}
	A=\begin{pmatrix}
	\aij{n+1,}{1} &\dots & \aij{n+m,}{1}\\
	\vdots & \ddots & \vdots\\
	\aij{n+1,}{n} & \dots & \aij{n+m,}{n}
	\end{pmatrix}
\end{equation}
whose elements $\aij{j,}{i}$ define the percentage of distribution of vehicles from road $i$ to road $j$ and are such that $\sum_{j=n+1}^{n+m}\aij{j,}{i}=1$, $i=1,\dots,n$. 
If we have $\kappa$ columns with zero entries, then no vehicle enters the corresponding outgoing roads, reducing the problem to case $n\to(m-\kappa)$. 
Therefore we assume that for each $j$ there exists at least a value $\aij{j,}{i}\neq0$ for $i=1,\dots,n$.

\medskip

We now introduce the \textit{Adapting Priority Riemann Solver for Second-Order Models} (APRSOM), which we propose for computing the unknowns values $\uS_{i}$ and $\uS_{j}$ at a junction with $i=1,\dots,n$ incoming and $j=n+1,\dots,n+m$ outgoing roads.
This approach can be summarized as follows:
\begin{itemize}
    \item[(a)] We define the set $\omegaInc\subset\R^{n}$ as the collection of all admissible solutions determined by the incoming roads and the $m$ hyperplanes where the outgoing flow is maximized.
    \item[(b)] We determine whether the priority rule line first intersects one of the maximizing hyperplanes or a boundary of $\omegaInc\subset\R^{n}$. If it intersects a maximizing hyperplane first, we immediately identify the solution that both maximizes the flow and respects the priority rule. Otherwise, we fix the component $\qS_{i}$ corresponding to the boundary of $\omegaInc$ crossed by the priority rule and proceed iteratively along that boundary. At each step, we reduce the problem's dimensionality and continue searching for the flow maximization solution.
\end{itemize}

Let us start first by assuming the conservation of $\rho$ and $y$ at the junction, i.e. for each $j=n+1,\dots,n+m$ we set
\begin{align}
	\sum_{i=1}^{n} \aij{j,}{i}\qS_{i} &= \qS_{j}\label{eq:NinMq}\\
	\sum_{i=1}^{n} \aij{j,}{i}\qS_{i}\wS_{i} &= \qS_{j}\wS_{j}.\label{eq:NinMw}
\end{align}
By \eqref{eq:NinMq} in \eqref{eq:NinMw}, for each outgoing road we have
\begin{equation}\label{eq:wGeneric}
	\wS_{j} = \frac{\sum_{i=1}^{n} \aij{j,}{i}\qS_{i}\wS_{i} }{\sum_{i=1}^{n} \aij{j}{i}\qS_{i}},
\end{equation}
where by Proposition \ref{prop:incoming} for incoming roads we have $\wS_{i}=w_{i}$, $i=1,\ldots,n$. 

\medskip

We now move to the $(\quno,\dots,\qn)$-hyperplane and we follow the idea given in points (a) and (b) above, looking for the maximization of the flow. 
\paragraph{Step 1}
For each incoming road we consider the demand function $\di=\dem{\rho_{i}}{w_{i}}$ defined in \eqref{eq:demand}, in order to define the set of all admissible solutions on incoming roads
\begin{align}
	\Omega_{\mathrm{inc}}&=[0,d_{1}]\times\dots\times[0,d_{n}].\label{eq:omegaN}
\end{align}
We assume that $d_{i}\neq0$ for each $i=1,\dots,n$. Indeed, the trivial case of $d_{i}=0$ for all $i$ means that no vehicles cross the intersection, while the case of $d_{i_{1}}=\dots=d_{i_{\kappa}}=0$ for $\kappa$ distinct indexes $i_{1},\dots,{i_{\kappa}}$ reduces the junction to the $(n-\kappa)\to m$ case.
We then introduce the $(n-1)$-dimensional manifold (hyperplane) of priority rule in parametric form introducing the \textit{flux variable} $h$ such that
\begin{equation}\label{eq:rettarN}
	\rettar:\begin{cases}
		\quno=\puno h\\
		\vdots\\
		\qn=p_{n} h,
	\end{cases}
\end{equation}
flux quantities
\begin{equation}\label{eq:hi}
		h_{i} = \max\{h\,:\,p_{i}h\leq\di\}=\disp\frac{\di}{p_{i}}.
\end{equation}
Note that $(\puno h_{i},\dots,p_{n} h_{i})$ is the intersection point between the line $\rettar$ and the hyperplane $d_{i}$. 
Next, we set $(\qS_{1},\dots, \qS_{n})=(\puno h,\dots, p_{n} h)$ in \eqref{eq:wGeneric} and we obtain
\begin{equation}\label{eq:wGeneric2}
	\wS_{j} =  \frac{\sum_{i=1}^{n} \aij{j}{i}p_{i}w_{i} }{\sum_{i=1}^{n} \aij{j}{i}p_{i}}.
\end{equation}
By \eqref{eq:wGeneric2} for $j=n+1,\dots,n+m$ we define for the supply function given in \eqref{eq:sup},
\begin{align}
	\nonumber s_{j}(\wS_{j})&=\supp{\rhomorto(\wS_{j};\vpiu_{j})}{\wS_{j}},\\
	\Omega_{\mathrm{out}}&=[0,s_{n+1}]\times\dots\times[0,s_{n+m}],\label{eq:omegaM}
\end{align}
where $\rhomorto(w;v)$ is specify in \eqref{eq:rhomorto}.
We then introduce 
\begin{equation}\label{eq:psi1}
	\psi_{j}(h)=h\sum_{i=1}^{n}\aij{j}{i}p_{i}
\end{equation}
and we define 
\begin{equation}\label{eq:min1}
	h_{j} = \min\{h>0\,:\, \psi_{j}(h)= s_{j}(\wS_{j}), \text{for $\wS_{j}$ in \eqref{eq:wGeneric2}}\}=\frac{s_{j}(\wS_{j})}{\sum_{i=1}^{n}\aij{j}{i}p_{i}}
\end{equation}
which identifies the intersection points $(\puno h_{j},\dots,p_{n} h_{j})$ between $\rettar$ in \eqref{eq:rettarN} and the hyperplanes 
\begin{equation*}
	\rettas_{j}: \sum_{i=1}^{n}\aij{j}{i}q_{i}=s_{j}(\wS_{j})
\end{equation*}
where the outgoing flux is maximized.
We define 
\begin{equation}\label{eq:hbv1}
	\hbv{1}=\min_{i,j}\{h_{i},h_{j}\}.
\end{equation}
We have the following possibilities:
\begin{enumerate}[label=(\arabic*),ref=\textup{(\arabic*)}]
	\item\label{alg1:casoA} If there exists an index $j\geq n+1$ such that $\hbv{1}=h_{j}$ then the line $\rettar$ first intersects a hyperplane $\rettas_{j}$ which maximizes the outgoing flux of road $j$ and satisfies the priority rule, thus we define the fluxes $(\qS_{1},\dots,\qS_{n})=(\puno\hbv{1},\dots, p_{n}\hbv{1})$ and the procedure stops. 
	\item\label{alg1:casoB} There is no index $j\geq n+1$ such that $\hbv{1}=h_{j}$. In this case we proceed as follows.
	\begin{enumerate}[label=(\alph*)]
		\item If we need to respect the priority rule then we define the fluxes $(\qS_{1},\dots,\qS_{n})=(\hbv{1}\puno,\dots,\hbv{1} p_{n})$, with $\hbv{1}=h_{i}$ for some $i\leq n$, and we stop.
		\item If we can adapt the priority rule, let $\ind{1}\leq n$ be the index of the incoming road such that $h_{\ind{1}}=\hbv{1}$. We set $\qS_{\ind{1}}=d_{\ind{1}}$, we introduce $\insInd=\{\ind{1}\}$ and we proceed by iteration.
	\end{enumerate}
\end{enumerate}

\paragraph{Step $\indice+1$}
Assume to have already defined $\indice$ components of the vector $(\qS_{1},\dots,\qS_{n})$, i.e. for each $\ind{k}\in \insInd=\{\ind{1},\dots,\ind{\indice}\}$ we have $\qS_{\ind{k}}=d_{\ind{k}}$ and we have to determine the remaining $\qS_{i}=hp_{i}$ for $i\leq n$ and $i\notin\insInd$. We now introduce the function
\begin{align*}
	\varphi_{j}(h) &= h\sum_{i\notin\insInd} \aij{j}{i}p_{i}w_{i} +\sum_{k\in\insInd}\aij{j}{\ind{k}}d_{\ind{k}}w_{\ind{k}}
\end{align*}
and modify $\psi_{j}(h)$ in \eqref{eq:psi1} as
\begin{align*}	
	\psi_{j}(h) &= h\sum_{i\notin\insInd} \aij{j}{i}p_{i} +\sum_{k\in\insInd}\aij{j}{\ind{k}}d_{\ind{k}}.
\end{align*}
We rewrite \eqref{eq:wGeneric} as
\begin{equation}\label{eq:wGenericK}
	\wS_{j}(h) =  \frac{\varphi_{j}(h)}{\psi_{j}(h)},
\end{equation}
and we exploit it to define $s_{j}(\wS_{j}(h)):=\supp{\rhomorto(\wS_{j}(h);\vpiu_{j})}{\wS_{j}(h)}$, $j=n+1,\dots,n+m$, and
\begin{equation}\label{eq:nmMax}
	h_{j} = \min\{h\in[\hbv{\indice},+\infty)\,:\, \psi_{j}(h)= s_{j}(\wS_{j}(h)), \text{with $\wS_{j}(h)$ in \eqref{eq:wGenericK}}\}.
\end{equation}
To conclude the iterative step we define 
\begin{equation}\label{eq:hbvk}
	\hbv{\indice+1}=\min_{i\notin\insInd,j}\{h_{i},h_{j}\}.
\end{equation}
with $h_{i}$ in \eqref{eq:hi}. Again we have two possibilities:
\begin{enumerate}[label=(\arabic*), ref=\textup{(\arabic*)}]
	\item\label{algS:casoA} If there exists an index $j\geq n+1$ such that $h_{j}=\hbv{\indice+1}$ then the straight line $\rettar$ first intersects a  hyperplane $\rettas_{j}$ which maximizes the outgoing flux of road $j$ and thus we define the remaining fluxes $\qS_{i}=\hbv{\indice+1} p_{i}$ for $i\notin\insInd$.
	\item\label{algS:casoB} Otherwise $\hbv{\indice+1}=h_{\ind{\indice+1}}$ for some $\ind{\indice+1}\leq n$, $\ind{\indice+1}\notin\insInd$. We add the new index in $\insInd$, i.e. $\insInd=\{\ind{1},\dots,\ind{\indice+1}\}$, and we continue iteratively until we have defined all the elements of the vector $(\qS_{1},\dots,\qS_{n})$. 
\end{enumerate}

\begin{remark}
We observe that the set $\mathcal{H}=\{h\in[\hbv{\indice},+\infty)\,:\, \psi_{j}(h)= s_{j}(\wS_{j}(h)), \text{with $\wS_{j}(h)$ in \eqref{eq:wGenericK}}\}$ is not empty for each step $\indice$ of the algorithm. Indeed, we have
\begin{align*}
	\lim_{h\to\infty}\psi_{j}(h)&=+\infty\\
	\lim_{h\to\infty} s_{j}(\wS_{j}(h))&= s_{j}(\widetilde w_{j})<+\infty,
\end{align*}
where $\widetilde w_{j} = \lim_{h\to\infty} \wS_{j}(h)<+\infty$, and $\psi_{j}(\hbv{\indice})\leq s_{j}(\wS_{j}(\hbv{\indice}))$ by construction. Therefore, by continuity, for each $j$ there exists a certain $h>\hbv{\indice}$ such that the equality $\psi_{j}(h)= s_{j}(\wS_{j}(h))$ holds.
\end{remark}

We can now define the APRSOM solver for GSOM on road networks.
\begin{defn}\label{def:RSgsom}
Let $\qv=(\qS_{1},\dots,\qS_{n})$ be the vector of incoming fluxes at the junction defined by the previous procedure applied to the initial state $\uV$, and $A\cdot \qv^{T}=(\qS_{n+1},\dots,\qS_{n+m})$ the vector of outgoing fluxes, where $A$ is the matrix of distribution \eqref{eq:Amatrix}. For every $i=1,\dots,n$ set 
\begin{itemize}
	\item $\wS_{i}=w_{i}$,
	\item $\rhoS_{i}\in\neg(U_{i})$ such that $Q(\rhoS_{i},\wS_{i})=\qS_{i}$, where $\neg(U_{i})$ is the set of possible right states for incoming roads defined in Proposition \ref{prop:incoming},
\end{itemize}
and $\uS_{i}=(\rhoS_{i},\wS_{i})$.
For every $j=n+1,\dots,n+m$ set 
\begin{itemize}
	\item $\wS_{j}$ as in \eqref{eq:wGeneric} if $\qS_{i}\neq0$ for at least an index $i$, or equal to $w_{j}$ otherwise,
	\item $\rhoS_{j}\in\pos(U_{j})$ such that $Q(\rhoS_{j},\wS_{j})=\qS_{j}$, where $\pos(U_{j})$ is the set of possible left states for outgoing roads defined in Proposition \ref{prop:outgoing},
\end{itemize}
and $\uS_{j} = (\rhoS_{j},\wS_{j})$.
The Adapting Priority Riemann Solver for Second Order Models ($\rsolvG$) on road networks is such that
\begin{equation*}
	\rsolvG\uV=\uVS.
\end{equation*}
\end{defn}

\subsection{The case of a merge}\label{sec:2in1}
For the sake of clarity, here is an illustration of the APRSOM algorithm for the case where there are two incoming roads and one outgoing road at an intersection (a merge). Let then be given two left states $\umeno_{1}$ and $\umeno_{2}$ for the incoming roads and a right state $\upiu_{3}$ for the outgoing road, then our aim is to determine $\upiuS_{1}$, $\upiuS_{2}$ and $\umenoS_{3}$. The approach is based on the priority rule defined by a vector $(\puno,\pdue)$, with $\puno+\pdue=1$ with  $\puno \neq 0$ and $\pdue \neq 0$.
In this case we have no distribution parameters and the conservation of $\rho$ and $y$ at the junction as in \eqref{eq:NinMq} and \eqref{eq:NinMw} respectively reads
\begin{align}
	\qpiuS_{1}+\qpiuS_{2}&=\qmenoS_{3} \label{eq:q2in1}\\
	\qpiuS_{1}\wpiuS_{1}+\qpiuS_{2}\wpiuS_{2}&=\qmenoS_{3}\wmenoS_{3}.\label{eq:w2in1}
\end{align}
By Proposition \ref{prop:incoming}, we have $\wpiuS_{1}=\wmeno_{1}$ and $\wpiuS_{2}=\wmeno_{2}$. Substituting \eqref{eq:q2in1} in \eqref{eq:w2in1} implies
\begin{align}\label{eq:w3}
	\wmenoS_{3} = \frac{\qpiuS_{1}\wmeno_{1}+\qpiuS_{2}\wmeno_{2}}{\qpiuS_{1}+\qpiuS_{2}}.
\end{align}
We now move to the $(\quno,\qdue)$-plane and we look for the maximization of the flow.
\paragraph{Step 1}
We introduce $\duno=\dem{\rhomeno_{1}}{\wmeno_{1}}$, $\ddue=\dem{\rhomeno_{2}}{\wmeno_{2}}$ and the rectangle of possible solutions $\Omega=[0,d_{1}]\times[0,d_{2}]$. As previously explained, we assume that both $\duno$ and $\ddue$ are positive to exclude both the trivial cases where no vehicle crosses the junction and the $1\to1$ case.
We then introduce in the $(\quno,\qdue)$-plane the straight line $\rettar$ of priority rule by means of the flux variable $h$, such that 
\begin{equation*}\label{eq:parametrica}
	\rettar : \begin{cases}
		\quno=\puno h\\ 
		\qdue = \pdue h,
	\end{cases}
\end{equation*}
and compute the flux quantities $\huno$ and $\hdue$ such that
\begin{align}
	\huno&=\max\{h\,:\,\puno h\leq\duno\}=
	\disp\frac{\duno}{\puno}
	\label{eq:h1}\\
	\hdue&=\max\{h\,:\,\pdue h\leq\ddue\}=
	\disp\frac{\ddue}{\pdue}
	\label{eq:h2}.
\end{align} 
Then, $(\duno,\pdue \huno)$ is the intersection point between the straight line $\rettar$ and the vertical line $\duno$ while $(\puno\hdue,\ddue)$ is the intersection point between the straight line $\rettar$ and the horizontal line $\ddue$.
By setting $\qpiuS_{1}=h\puno$ and $\qpiuS_{2}=h\pdue$ in \eqref{eq:w3} we have 
\begin{align}
	\wmenoS_{3}&=\puno\wmeno_{1}+\pdue\wmeno_{2}\label{eq:2in1w3step1}\\
	\nonumber\stre&=\supp{\rhomorto(\wmenoS_{3};\vpiu_{3})}{\wmenoS_{3}},\label{eq:2in1s3}
\end{align} 
where $\vpiu_{3}=V(\rhopiu_{3},\wpiu_{3})$ and $\rhomorto_{3}(\wmenoS_{3};\vpiu_{3})$ is given in \eqref{eq:rhomorto}. In order to maximize the flux on the outgoing road, we set
\begin{equation}\label{eq:2in1h3}
	\htre = \min\{h\,:\,h\puno+h\pdue=\stre\}=\stre,
\end{equation}
which identifies the intersection point $(\htre\puno,\htre\pdue)$ between the straight line $\rettar$ and the straight line
\begin{equation}\label{eq:rettas}
	\rettas: \quno+\qdue=\stre,
\end{equation}
where the outgoing flux is maximized.
We then define 
\begin{equation}
	\hbv{1}=\min\{\huno,\hdue,\htre\}.
\end{equation}
Figure \ref{fig:linea} shows an example of the three points identified by $\huno$, $\hdue$ and $\htre$, i.e. $P_{1} = (\duno,\huno\pdue)$, $P_{2}=(\hdue\puno,\ddue)$ and $P_{3}=(\htre\puno,\htre\pdue)$, where $\hbv{1}=\huno$. 
\begin{figure}[h!]
\centering
\includegraphics[]{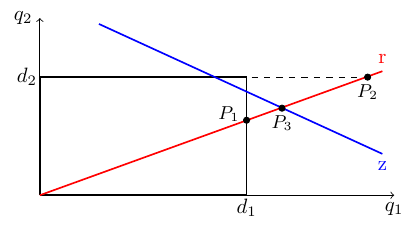}
\caption{Example of the three points $P_{1}=(\huno\puno,\huno\pdue)$, $P_{2}=(\hdue\puno,\hdue\pdue)$ and $P_{3}=(\htre\puno,\htre\pdue)$.}
\label{fig:linea}
\end{figure}

We now have two possibilities: $\hbv{1}=\htre$ or $\hbv{1}\neq\htre$. In the first case we immediately find a couple of incoming fluxes $(\qpiuS_{1},\qpiuS_{2})$ which satisfy the priority rule and that maximise the flux. In the second case the priority first intersects the boundary of the set of possible solutions $\Omega$ and does not maximize the outgoing flux. 
The approach we propose is divided in two cases:
\begin{enumerate}[label=(\alph*)]
	\item We need to strictly satisfy the priority rule. This case is necessary to simulate traffic scenarios such as traffic lights, where the priority rule must be satisfied.
	\item We are free to adapt, i.e. to change, the priority rule. This case is useful to maximise the flux when the intersection between the straight lines $\rettar$ and $\rettas$ is outside the set of possible solutions $\Omega$. The idea is to change the priority $\rettar$, and consequently the parameter $\wmenoS_{3}$ and the maximization straight line $\rettas$, looking for the intersection between the modified $\rettar$ and $\rettas$ which maximizes the flux at the junction.
\end{enumerate}

\begin{description}
\item[Case $\hbv{1}=\htre$.]
In this case, the priority rule first intersects the straight line $\rettas$, see Figure \ref{fig:2in1dentro}. This means that the intersection point identifies two incoming fluxes satisfying the priority and which maximise the outgoing flux. Therefore, we have $\wmenoS_{3}$ in \eqref{eq:2in1w3step1} and
\begin{equation*}
	\qpiuS_{1} = \puno\htre, \qquad \qpiuS_{2} = \pdue\htre,\qquad \qmenoS_{3}=\htre.
\end{equation*}  
\begin{figure}[h!]
\centering
\includegraphics[]{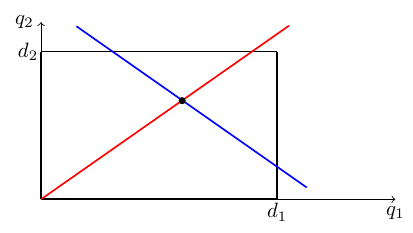}
\caption{Merge junction with $\hbv{1}=\htre$.}
\label{fig:2in1dentro}
\end{figure}
\item[Case $\hbv{1} = \huno$.]
In this case the straight line $\rettar$ first intersects the vertical line $\quno=\duno$. 
\begin{enumerate}[label=(\alph*)]
	\item If we need to respect the priority rule, then the solution is given by 
\begin{equation*}
	\qpiuS_{1} = \puno\huno, \qquad \qpiuS_{2} = \pdue\huno,\qquad \qmenoS_{3}=\huno,
\end{equation*}  
	with $\wmenoS_{3}$ defined in \eqref{eq:2in1w3step1}. This case is represented in Figure \subref*{fig:2in1A}.
	\item If we can adapt the priority rule, we fix $\qpiuS_{1}=\hbv{1}\puno=\duno$ and we move along the vertical side of $\Omega$, looking for $\qpiuS_{2}=\hbv{2}\pdue$ for a proper $\hbv{2}$. The idea of our approach is to modify both $\rettar$ and $\rettas$ in order to find the intersection between the two straight lines along the vertical line $\quno=\duno$.  
By equation \eqref{eq:w3} with $\qpiuS_{1}=\duno$ and $\qpiuS_{2}=h\pdue$ we have
\begin{equation}\label{eq:w3h1}
	\wmenoS_{3}(h)= \disp\frac{\duno\wmeno_{1}+ h\pdue\wmeno_{2}}{\duno+h\pdue}
\end{equation}
which is such that 
\begin{equation*}
	\lim_{h\to0}\wmenoS_{3}(h) = \wmeno_{1},\qquad \lim_{h\to\infty}\wmenoS_{3}(h) = \wmeno_{2}.
\end{equation*}
Let $\stre(\wmenoS_{3}):=\supp{\rhomorto(\wmenoS_{3};\vpiu_{3})}{\wmenoS_{3}}$, we define $\htre$ as
\begin{equation}\label{eq:2in1maxh3}
	\htre=\min\{h\in[\huno,+\infty) \,:\, \duno+h\pdue=\stre(\wmenoS_{3}(h)), \text{ for $\wmenoS_{3}(h)$ in \eqref{eq:w3h1}}\}.
\end{equation}
Note that there exists at least a value of $h$ satisfying $\duno+h\pdue=\stre(\wmenoS_{3}(h))$. Indeed, $\duno+\hbv{1}\pdue<\stre(\wmenoS_{3}(\hbv{1}))$ by hypothesis of $\hbv{1}\neq\htre$, and
\begin{equation*}
	\lim_{h\to+\infty}\duno+h\pdue=+\infty, \qquad \lim_{h\to+\infty} \stre(\wmenoS_{3}(h))=\stre(\wmeno_{2})<+\infty,
\end{equation*}
hence, by continuity, $\duno+h\pdue$ must intersect $\stre(\wmeno_{3}(h))$ for some $h$.

Once computed the new $\htre$, we define $\hbv{2}=\min\{\hdue,\htre\}$, with $\hdue$ in \eqref{eq:h2}, and the fluxes crossing the junction as 
\begin{equation*}
	\qpiuS_{1} = \duno,\quad \qpiuS_{2}= \hbv{2}\pdue,\quad \qmenoS_{3}=\duno+\hbv{2}\pdue.
\end{equation*}
In Figures \subref*{fig:2in1B} and \subref*{fig:2in1C} we show the solution with $\hbv{2}=\htre$ and $\hbv{2}=\hdue$, respectively. Moreover, the new vector of priority rule is $(\pb_{1},\pb_{2})=(\qpiuS_{1}/\qmenoS_{3},\qpiuS_{2}/\qmenoS_{3})$.
%

\end{enumerate}

\begin{figure}[h!]
\centering
\includegraphics[]{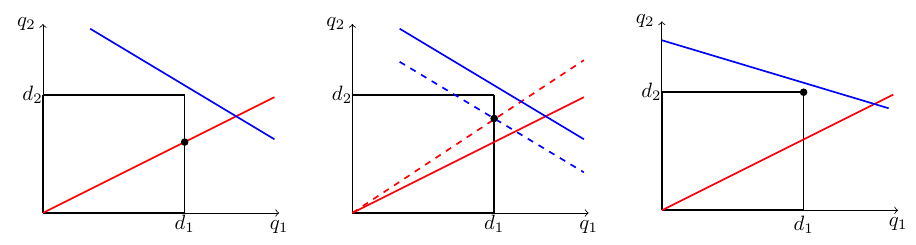}\caption{Merge junction with $\hbv{1}=\huno$.}
\label{fig:2in1destra}
\end{figure}

\item[Case $\hbv{1}=\hdue$.] This case is completely analogous to the previous one, but the straight line $\rettar$ first intersects the horizontal line $\qdue=\ddue$.
\begin{enumerate}[label=(\alph*)]
	\item If we need to respect the priority rule then the solution is given by
\begin{equation*}
	\qpiuS_{1} = \puno\hdue, \qquad \qpiuS_{2} = \pdue\hdue,\qquad \qmenoS_{3}=\hdue,
\end{equation*}  
with $\wmenoS_{3}$ in \eqref{eq:2in1w3step1}. This case is shown in Figure \subref*{fig:2in1D}.
	\item If we can adapt the priority rule then we define 
\begin{align*}
	\wmenoS_{3}(h) &= \disp\frac{h\puno\wmeno_{1}+\ddue\wmeno_{2}}{h\puno+\ddue}\\
	\htre&=\min\{h\in[\hdue,+\infty) \,:\, h\puno+\ddue=\stre(\wmenoS_{3}(h))\}\\
	\hbv{2}&=\min\{\huno,\htre\},
\end{align*}
with $\huno$ in \eqref{eq:h1}, from which we recover
\begin{equation*}
	\qpiuS_{1} = \hbv{2}\puno,\quad \qpiuS_{2}= \ddue,\quad \qmenoS_{3}=\hbv{2}\puno+\ddue.
\end{equation*}
In Figures \subref*{fig:2in1E} and \subref*{fig:2in1F} we show the solution with $\hbv{2}=\htre$ and $\hbv{2}=\huno$, respectively.
\end{enumerate}
\begin{figure}[h!]
\centering
\includegraphics[]{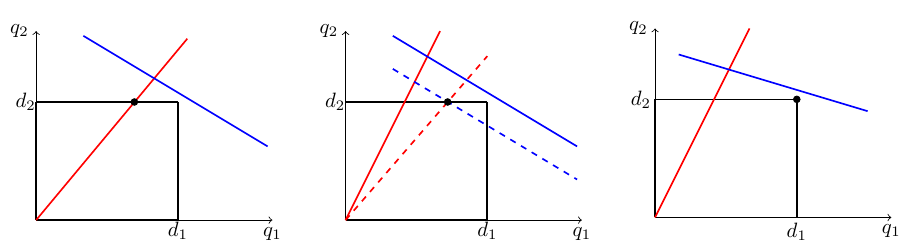}\caption{Merge junction with $\hbv{1}=\hdue$.}
\label{fig:2in1su}
\end{figure}
\end{description}


\section{Bounds on the total variation of the flux for wave-front tracking solutions}\label{sec:TVbounds}

The aim of this section is to give a bound to the total variation of the flux for the approximate solution on the networks obtained via wave-front tracking and the algorithm $\rsolvG$.
Such solutions are constructed solving recursively Riemann problems inside the roads and at the junctions. We refer the reader to \cite{Bressan2000} for a general introduction to wave-front tracking, and to \cite{garavello2016models} for the network case. As shown in \cite{garavello2006AIHP}, due to the finite speed of propagation of waves, it is sufficient to consider the case of a networks composed of a single junction. Therefore we consider the Cauchy problem 
\begin{equation}\label{eq:cauchyWFT}
	\begin{dcases}
		\det\rho_{\road}+\dex(\rho_{\road} v_{\road}) = 0\\
		\det y_{\road}+\dex(y_{\road} v_{\road}) = 0 &\quad\text{for $\road=1,\dots,n+m$}\\
		(\rho_{\road}(x,0),y_{\road}(x,0)) = (\rho_{\road,0}(x),y_{\road,0}(x))
	\end{dcases}
\end{equation}
with initial data $(\rho_{\road,0}(x),y_{\road,0}(x))$ of bounded variation.
The network is formed by a single junction with $n$ incoming and $m$ outgoing roads and at the junction the traffic dynamic is described by \eqref{eq:GSOM2}. We recall that $I_{\road}=(a_{\road},b_{\road})\subset\R$, $\road=1,\dots,n+m$, are the roads of the network.
For a collection of functions $(\rho_{\road},y_{\road})\in C([0,+\infty);\lunoloc(I_{\road})^{2})$ such that, for every $\road\in\{1,...,n+m\}$ and a.e. $t>0$, the map $x\mapsto\rho_{\road}(x,t)$ has a version with bounded total variation, we define the functionals
\begin{equation}\label{eq:funzionali}
	\begin{split}
	\Gamma(t) &= \sum_{i=1}^{n}Q(\rho_{i}(b_{i}-,t),w_{i}(b_{i}-,t))\\
	\tv_{Q}(t) &= \sum_{\road=1}^{n+m}\tv(Q(\rho_{\road}(\cdot,t),w_{\road}(\cdot,t)))\\
	\tv_{w}(t) &=\sum_{\road=1}^{n+m}\tv(w_{\road}(\cdot,t))
	\end{split}
\end{equation}
where $\tv$ is the total variation and $w_{\road}=y_{\road}/\rho_{\road}$. Note that $\Gamma(t)$ represents the flux crossing the junction at time $t$ and involves only the incoming roads.
\begin{defn}
We say that the state $\uV$ is an equilibrium for a Riemann Solver $\rsolv$ if
\begin{equation*}
	\rsolv\uV=\uV.
\end{equation*}
\end{defn}

We now focus on the algorithm $\rsolvG$ introduced in Section \ref{sec:NinM}. We recall that $A$ is the matrix of distribution defined in \eqref{eq:Amatrix} and that $(\puno,\dots,p_{n})$ is the vector defining the priority rule $\rettar$.
Let $\qv=(\qS_{1},\dots,\qS_{n})$ be the set of incoming fluxes obtained with $\rsolvG$ and $A\cdot \qv^{T}=(\qS_{n+1},\dots,\qS_{n+m})$ the resulting set of outgoing fluxes. We introduce 
\begin{equation}\label{eq:Theta}
	\Theta = \{\qv\in\omegaInc\,:\, A\cdot \qv^{T}\in\omegaOut, \text{ with $\omegaInc$ in \eqref{eq:omegaN} and $\omegaOut$ in \eqref{eq:omegaM}}\} 
\end{equation}
so that we define
\begin{equation}\label{eq:hh}
	\hh=\sup_{h \in \R} \{ (h\puno,\dots,h p_{n})\in\Theta\}.
\end{equation}
This value $\hh$ identifies the intersection point $\hh\,(\puno,\dots,p_{n})$ between the line $\rettar$ in \eqref{eq:rettarN} and the set $\Theta$ in \eqref{eq:Theta}.

We now introduce four properties of the Riemann solver to estimate the total variation of $Q$ and $w$ for waves interacting with the junction. The first property says that equilibria depends only on bad data (see Definitions \ref{def:InBadDatum} and \ref{def:OutBadDatum}).
\begin{enumerate}[label=(P\arabic*),ref=(P\arabic*)]
	\item\label{rsp1}
We say that a Riemann solver $\rsolv$ has the property $\mathrm{(P1)}$ if, given $\uV$ and $\uVh$ such that $w_{i}=\hw_{i}$ for $i=1,\dots,n$, $w_{j}=\hw_{j}$ for  $j=n+1,\dots,n+m$ and $\rho_{i}=\hrho_{i}$ $(\rho_{j}=\hrho_{j})$ whenever either $U_{i}$ or $\hu_{i}$ $(U_{j}$ or $\hu_{j})$ is a bad datum, then
\begin{align*}
	\rsolv\uV=\rsolv\uVh.
\end{align*}
\end{enumerate}

The second property refers to interacting waves which involve only the density $\rho$. This means that, starting from an equilibrium of $\rsolv$, we perturb the density of one of the roads keeping its $w$ value unchanged. The following property tells us that the increase in the variation of the flux and of $w$ at the junction is bounded by the strength of the interacting wave as well as by the sum of the variations in the incoming fluxes and in $\hh$ defined in \eqref{eq:hh}. Note that, even when the wave does not directly perturb the property $w$, the latter varies by interacting with the junction.

\begin{enumerate}[resume*]
	\item\label{rsp2}
We say that a Riemann solver $\rsolv$ has the property $(\mathrm{P2})$ if there exists a constant $C\geq1$ such that for every equilibrium $\uV$ of $\rsolv$ and for every wave $\ondarho_{i}$ perturbing $\rho_{i}$ for $i=1,\dots,n$ ($\ondarho_{j}$ perturbing $\rho_{j}$ for $j=n+1,\dots,n+m$, respectively$)$ interacting with $J$ at time $\tb>0$ and producing waves in the arcs according to $\rsolv$, for $\road=i$ ($r=j$) we have
\begin{align*}
	\tv_{Q}(\tb+)-\tv_{Q}(\tb-)&\leq C \min\{|\ondaq_{\road}-q_{\road}|,|\Gamma(\tb+)-\Gamma(\tb-)|+|\hh(\tb+)-\hh(\tb-)|\}\\
	\hh(\tb+)-\hh(\tb-)&\leq C|\ondaq_{\road}-q_{\road}|\\
	\tv_{w}(\tb+)-\tv_{w}(\tb-)&\leq C \min\{|\ondaq_{\road}-q_{\road}|,|\Gamma(\tb+)-\Gamma(\tb-)|+|\hh(\tb+)-\hh(\tb-)|\},
\end{align*}
with $\ondaq_{\road}=Q(\ondarho_{i},w_{i})$ and $q_{\road}=Q(\rho_{i},w_{i})$ $(\ondaq_{\road}=Q(\ondarho_{j},w_{j})$  and $q_{\road}=Q(\rho_{j},w_{j})$, respectively$)$.
\end{enumerate}

The third property also refers to interacting waves which involve only the density $\rho$. It tells us that, when the interacting wave with the junction determines a decrease in the flux, then also $\hh$ decreases and the variation of $\Gamma$ is bounded by the variation of $\hh$.

\begin{enumerate}[resume*]
\item\label{rsp3}
We say that a Riemann solver $\rsolv$ has the property $\mathrm{(P3)}$  if there exists a constant $C\geq1$ such that for every equilibrium $\uV$ of $\rsolv$ and for every wave $\ondarho_{i}$ perturbing $\rho_{i}$ with $\ondaq_{i}=Q(\ondarho_{i},w_{i})< q_{i}$ for $i=1,\dots,n$ ($\ondarho_{j}$ perturbing $\rho_{j}$ with $\ondaq_{j}=Q(\ondarho_{j},w_{j})<q_{j}$ for $j=n+1,\dots,n+m$, respectively$)$ interacting with $J$ at time $\tb>0$ and producing waves in the arcs according to $\rsolv$, we have
\begin{align*}
	\Gamma(\tb+)-\Gamma(\tb-)&\leq C|\hh(\tb+)-\hh(\tb-)|\\
	\hh(\tb+)&\leq\hh(\tb-).
\end{align*}
\end{enumerate}

Finally, we consider an interacting wave with the junction which perturbs both $\rho$ and $w$ on one of the incoming roads. The fourth property says that the increase in the variation of $w$ is bounded by the variation of the interacting wave in $w$ and the strength of the interacting wave as well as by the sum of the variations in the incoming fluxes and in $\hh$.

\begin{enumerate}[resume*]
\item\label{rsp4}
We say that a Riemann solver $\rsolv$ has the property $\mathrm{(P4)}$  if there exist two constants $C_{1}\geq1$ and $C_{2}\geq1$ such that for every equilibrium $\uV$ of $\rsolv$ and for every wave $(\ondarho_{i},\ondaw_{i})$ perturbing $(\rho_{i},w_{i})$, $i=1,\dots,n$, interacting with $J$ at time $\tb>0$ and producing waves in the arcs according to $\rsolv$, 
the estimates on $\tv_{Q}$, $\hh$ and $\Gamma$ hold and we have
\begin{equation*}
	\tv_{w}(\tb+)-\tv_{w}(\tb-)\leq C_{1}|\ondaw_{i}-w_{i}|+C_{2} \min\{|\ondaq_{i}-q_{i}|,|\Gamma(\tb+)-\Gamma(\tb-)|+|\hh(\tb+)-\hh(\tb-)|\},
\end{equation*}
with $\ondaq_{i}=Q(\ondarho_{i},\ondaw_{i})$ and $q_{i}=Q(\rho_{i},w_{i})$.
\end{enumerate}

\begin{remark}
Property \ref{rsp4} only refers to the incoming roads. Indeed, for any outgoing road, if we perturb the equilibrium $(\rho_{j},w_{j})$ with a wave $(\ondarho_{j},\ondaw_{j})$, the solution which arrives at the junction is only characterize by $\rho$-waves with constant $w$, thus on outgoing roads the junction is never affected by the variation in $w$. 
\end{remark}

\begin{theorem}\label{thm:propP}
The Riemann Solver $\rsolvG$ defined in Section \ref{sec:NinM} satisfies properties \ref{rsp1} -- \ref{rsp4} for junctions with $n=2$ incoming and $m=2$ outgoing roads.
\end{theorem}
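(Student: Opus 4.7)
The plan is to fix an equilibrium $\uV=\rsolvG\uV$ and a single perturbing wave arriving at $J$ at time $\tb$, and to exploit the explicit parametric form of $\rsolvG$. Since $n=m=2$ the algorithm terminates in at most two iterations, so the equilibrium falls in finitely many configurations: either the priority line $\rettar$ first meets a supply hyperplane $\rettas_{n+1}$ or $\rettas_{n+2}$ (case \ref{alg1:casoA}, no adaptation), or it first meets a demand boundary $q_i=d_i$ for some $i\in\{1,2\}$ (case \ref{alg1:casoB}). In the latter, the procedure either respects the priority and stops, or adapts and the second iteration terminates on a supply hyperplane or on the remaining demand boundary. For each configuration, and for each choice of perturbed road $r$, one tracks how the quantities $(\qS_1,\qS_2,\qS_{n+1},\qS_{n+2})$, $\hh$, and the outgoing $\wS_j$ respond to the perturbation, using the parametric formulas \eqref{eq:rettarN}--\eqref{eq:hbv1} and their second-step analogues.

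Property \ref{rsp1} is a direct consequence of how data enters APRSOM. By Proposition \ref{prop:incoming}, if $U_i$ is a good incoming datum, then $d_i=\qmax(w_i)$, which is insensitive to $\rho_i$. By Proposition \ref{prop:outgoing} together with \eqref{eq:rhomorto}, the supply $s_j$ depends on $\rho_j^{+}$ only when $U_j$ is a bad outgoing datum (otherwise $s_j=\qmax(\wmorto)$). Since $\rsolvG$ only sees $\uV$ through the tuple $(d_i,s_j,p_i)$ together with the $w$-values, the output is identical for $\uV$ and $\uVh$ whenever the two states coincide in the sense of \ref{rsp1}.

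For properties \ref{rsp2} and \ref{rsp3}, a $\rho$-perturbation on road $r$ changes the demand $d_r$ (or supply $s_r$) by at most $|\ondaq_r-q_r|$ thanks to the Lipschitz dependence of $Q$ on $\rho$. The flux threshold $\hh$ is obtained via \eqref{eq:hbv1} (and \eqref{eq:hbvk} at step two) as a minimum of functions affine in $d_i$ and Lipschitz in $s_j$ through $\wS_j(h)$ in \eqref{eq:wGenericK}; this yields $|\hh(\tb+)-\hh(\tb-)|\leq C|\ondaq_r-q_r|$ with a uniform $C$. The bounds on $\mathrm{TV}_Q$ follow by summing the flux jumps on the four arcs: the incoming flux variation is controlled by $|\ondaq_r-q_r|$, while the two outgoing flux variations are controlled by $|\Gamma(\tb+)-\Gamma(\tb-)|+|\hh(\tb+)-\hh(\tb-)|$ through the conservation identity \eqref{eq:NinMq}, which yields the minimum. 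The $\mathrm{TV}_w$ estimate comes from \eqref{eq:wGenericK}. For \ref{rsp3}, a decrease in $\ondaq_r$ strictly shrinks one of the constraints defining $\hh$, so $\hh(\tb+)\leq\hh(\tb-)$; combining with the explicit algorithmic expressions $\qS_i=p_i\hh$ or $\qS_i=d_i$ shows that $\Gamma=\sum_i\qS_i$ depends linearly on $\hh$ up to frozen boundary terms, whence $|\Gamma(\tb+)-\Gamma(\tb-)|\leq C|\hh(\tb+)-\hh(\tb-)|$.

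Property \ref{rsp4} extends \ref{rsp2} by admitting a $w$-perturbation on an incoming road. The estimates on $\mathrm{TV}_Q$, $\hh$ and $\Gamma$ carry over with $\ondaq_i=Q(\ondarho_i,\ondaw_i)$, since only the flux $\ondaq_i$ enters the constraint system through $d_i$. The new $w$-contribution splits into two parts: the direct jump of the interacting wave of size $|\ondaw_i-w_i|$, and its propagation to the outgoing roads via \eqref{eq:wGenericK}, which is bounded by $|\ondaw_i-w_i|$ with coefficient $a_{ji}\qS_i/\sum_k a_{jk}\qS_k\leq 1$, giving the $C_1|\ondaw_i-w_i|$ term. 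The remainder is flux-driven and is bounded exactly as in \ref{rsp2}, producing the $C_2\min\{\cdot\}$ term. The main obstacle is the bookkeeping across configuration transitions: a perturbation may push APRSOM from case \ref{alg1:casoA} to case \ref{alg1:casoB} (or swap the identity of the active constraint inside \ref{alg1:casoB}), and one has to verify that $\hh$ and the output fluxes depend continuously on the input state across such transitions and that a single constant $C$ (respectively $C_1,C_2$) controls all estimates. In the $2\times 2$ setting this is tractable because the configuration graph is finite, the transition loci are hyperplanes in $(d_1,d_2,s_{n+1},s_{n+2})$-space, and the Lipschitz constants on each piece can be merged into a global one.
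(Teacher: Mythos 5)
Your overall strategy coincides with the paper's: both proofs fix an equilibrium, classify it by which constraint of the algorithm is active (the paper's Cases A, B, C are exactly your corner / demand-boundary / supply-hyperplane configurations), perturb one road at a time, and track the output of APRSOM explicitly; your argument for \ref{rsp1} is essentially the paper's. The gap is in the $\tv_{w}$ estimates, which are the genuinely new content here relative to the first-order case (the paper delegates most of the $\Gamma$, $\hh$ and $\tv_{Q}$ bounds to the LWR reference and concentrates on $w$). Saying that ``the $\tv_{w}$ estimate comes from \eqref{eq:wGenericK}'' does not produce the specific form required by \ref{rsp2} and \ref{rsp4}: one must show that $|\wS_{j}-w_{j}|$ is bounded either by $C|\ondaq_{r}-q_{r}|$ or by $C(|\Delta\Gamma|+|\Delta\hh|)$, and which of the two is available depends on whether only one incoming flux moves ($\qS_{i}=q_{i}$ for some $i$) or both move. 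In the latter situation the paper needs the algebraic identity $|\wS_{3}-w_{3}|=\frac{\at\bt|w_{1}-w_{2}|}{\qS_{3}q_{3}}\,|\qS_{2}(\qS_{1}-q_{1})-\qS_{1}(\qS_{2}-q_{2})|$, together with the observation that in each sub-case $\Delta\hh$ is proportional to $\qS_{1}-q_{1}$ or to $\qS_{2}-q_{2}$, so that the bracket can be rewritten as $(\qS_{1}+\qS_{2})(\qS_{i}-q_{i})-\qS_{i}\,\Delta\Gamma$ and bounded by $(|\Delta\Gamma|+|\Delta\hh|)$ times an explicit factor. Nothing in your sketch produces this cancellation; the naive bound from \eqref{eq:wGenericK} only gives $|\wS_{j}-w_{j}|\lesssim |\qS_{1}-q_{1}|+|\qS_{2}-q_{2}|$, which is not one of the admissible right-hand sides in \ref{rsp2}.

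A second, related issue is the uniformity of the constants. The bounds you assert as ``Lipschitz with a uniform $C$'' come out, when actually computed, with factors such as $1/q_{3}$, $1/q_{4}$, $1/\ondaq_{1}$ in front (this is visible throughout the paper's appendix), so the claim that ``the Lipschitz constants on each piece can be merged into a global one'' requires either an argument that the relevant fluxes stay bounded away from zero or a separate treatment of the degenerate cases. Moreover, the transition loci between configurations are not hyperplanes in $(\duno,\ddue,\stre,\sq)$-space, because $s_{j}$ depends on the candidate solution through $\wS_{j}(h)$ in \eqref{eq:wGenericK}: the outgoing constraint set moves with the solution itself. This feedback is precisely what distinguishes the GSOM junction from the LWR one, and it is the part of the argument that cannot be dispatched by continuity plus finiteness of the configuration graph; the paper resolves it by carrying out the explicit computation in every sub-case.
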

\noindent The proof of this theorem is given in Appendix \ref{appendice}.

\subsection{Flux variation due to returning waves}\label{sec:fluxRet}
We fix a road $I_s$ (incoming or outgoing) and introduce the following definitions.

\begin{defn}[Backward wave tree]
For a fixed road $I_s$ and a wave located at a point $(x,t)$ of the domain ($I_s\times [0,+\infty[$), the \textbf{backward wave tree} is obtained by tracing the wave fronts of the solution - constructed via the WFT (Wave Front-Tracking) Algorithm - backward in time from the chosen point $(x,t)$ to the boundary of the domain. Wave fronts of both families are considered, thus, repeating this process recursively at each interaction point, it generates a tree-like structure that represents the backward propagation of information from the point $(x,t)$.
\end{defn}
\begin{defn}[Backward wave branch]
A \textbf{backward wave branch} of a backward wave tree consists of a piece-wise linear branch (or branches, in the case of interactions between waves of the same family) that includes only fronts of the same family. 
\end{defn}

Hereafter, a wave with right state $U^+=(\rho^+, w^+)$ and left state $U^-=(\rho^-, w^-)$ will be denoted by
\begin{equation}\label{def:Sigma}
\Sigma:=(U^-, U^+),
\end{equation}
and the \emph{forward} (resp. backward) \emph{flux variation} across the wave $\Sigma$ by
\begin{equation}\label{def:FVsigma}
\delta_+ Q_\Sigma = Q(\rho^{+}, w^{+})-Q(\rho^{-}, w^{-}) \quad\mbox{and}\quad \delta_- Q_\Sigma = -\delta_+ Q_\Sigma.
\end{equation}
%

\begin{defn}[Returning wave] \label{def:returnwave} 
A \textbf{returning wave} $\Sret = (U^{R, -}, U^{R, +})$, where $U^{R, \pm} = (\rho^{R, \pm}, w^{R, \pm})$, is a wave front generated at the junction J and interacting with the junction J at a later time  $t_a$. Thus the \emph{backward wave branch} includes at least one wave (of the same family) originating  from the junction J at a previous time.  We indicate by $t_o < t_a$ the greatest time at which a wave of the backward branch of $\Sigma^{R; t_o, t_a}$ originated from the junction J.
We shall refer to $t_o$ and $t_a$ as the \emph{original time} and the \emph{absorption time} of the returning wave.
Moreover, using the notation of \eqref{def:FVsigma}, we define the \emph{flux variation} at the absorption time $t_a$ as follows :
\begin{itemize}
\item if $\Sret$ is traveling on an incoming road
\begin{align}\label{def:fluxJin}
\delta_- Q_{\Sret}=Q(\rho^{R, -}, w^{R, -})-Q(\rho^{R, +}, w^{R, +}),
\end{align}
\item if $\Sret$ is traveling on an outgoing road
\begin{align}\label{def:fluxJout}
\delta_+ Q_{\Sret} =Q(\rho^{R, +}, w^{R, +})-Q(\rho^{R, -}, w^{R, -}).
\end{align}
\end{itemize}
\end{defn}
In Figure \ref{fig:inRet} and Figure \ref{fig:outRet} we show some graphical examples of backward wave branch of returning waves on incoming and outgoing roads respectively. $\rho$-waves are represented by solid blue lines and $w$-waves by dashed green lines.

\begin{figure}[h!]
\centering
\includegraphics[]{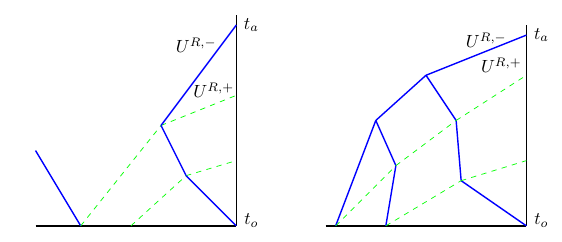}
\caption{Examples of \textit{backward wave branch} (the solid blue lines) of a returning wave $\Sret_\rho$ on an incoming road. Green dashed lines indicate $w$-waves.}
\label{fig:inRet}
\end{figure}

\begin{figure}[h!]
\centering
\includegraphics[]{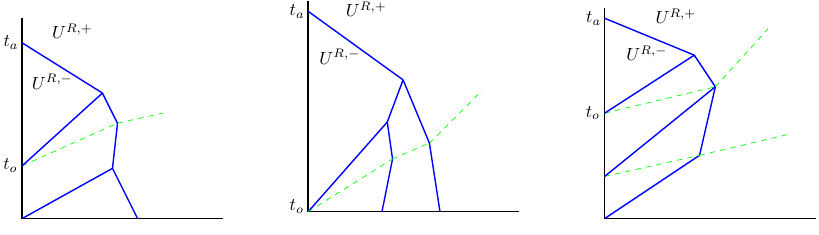}\caption{Examples of \textit{backward wave branch} (the solid blue lines) of a returning wave $\Sret_\rho$ on an outgoing road. Green dashed lines indicate $w$-waves.}
\label{fig:outRet}
\end{figure}

\begin{remark}\label{remark:ret-onde-rho} A returning wave $\Sret$ is always a $\rho$-wave (therefore we occasionally use the notation  $\Sret_\rho$).
In fact, only $\rho$-waves traveling on outgoing roads have the potential to change their speed sign and return from the right to the left (towards the junction). Conversely, $w$-waves can interact with the junction on incoming roads, but these waves cannot be considered ``returning waves'' because they consistently move with a positive speed and are never emitted from the junction.
\end{remark}

\begin{figure}[h!]
\centering
\includegraphics[]{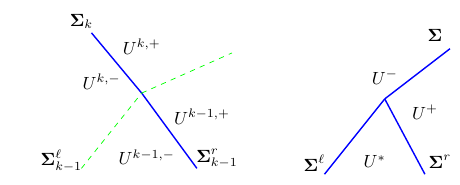}
\caption{Left: A possible configuration of an interaction between a $w$-wave (left, dashed green line) and a $\rho$-wave (right, solid blue line), $U^{k, \pm}=(\rho^{k,\pm},w^{k})$, $U^{k-1, \pm}=(\rho^{k-1,\pm},w^{k-1})$.
Right: A possible configuration of an interaction between two $\rho$-waves (blue lines), $U^\pm=(\rho^\pm,\cdot)$.}
\label{fig:conf1-conf2}
\end{figure}
%
Let us introduce the concept of level of waves.
\begin{defn}[Level of waves in the backward wave tree of a returning wave $\Sigma^{R; t_0, t_a}$]\label{def:order-waves}
\label{def:order}
Given a returning wave $\Sigma^{R; t_o, t_a}$ with original time $t_o>0$ and absorption time $t_a>0$, let us introduce a notation for the waves belonging to its backward wave tree. More precisely, we enumerate such waves starting from the bottom of the tree, namely from the waves at time $t_o$.
\begin{itemize}
\item All the waves traveling at time $t_o$ are called waves of level $1$ and they are denoted by $\Sigma_1$.
If a wave $\Sigma_1$ of level 1 interacts with a $w$-wave, then the interaction generates waves of level 2, which are denoted by $\Sigma_2$. 
\item If a wave $\Sigma_k$ of level $k \ge 1$ interacts with a $w$-wave, then the interaction generates waves $\Sigma_{k+1}$ of order $k+1$. 
\item In turn, if a wave of order $k \ge 1 $ interacts with a $\rho$-wave, then the level of the daughters does not change, namely the interaction generates waves $\Sigma_k$ of order $k$.
\end{itemize}
\noindent In summary, the level of the new waves changes only if one of the interacting waves is a $w$-wave. 
\begin{itemize}
\item Finally, let $K\ge 1$ be the maximum among the $k$'s of the waves in the backward wave tree of $\Sret$. We say that $\Sret$ is of order $K$.
\end{itemize}
\end{defn}
\noindent \textbf{Notation}: regardless of the precise level, the mothers of a wave $\Sigma$ are denoted by $\Sigma^\ell$ (left) and $\Sigma^r$ (right). 
\begin{lemma}\label{lemma:deltaQ} Consider a $\rho$-wave $\Sigma=(U^-,U^+)$, with left and right states $U^{\pm}=(\rho^\pm, w^\pm)$ such that $w^-=w^+$ (recall that $w$ is conserved through a $\rho$-wave). 
The flux variations $\delta_+Q_{\Sigma}$ (and $\delta_-Q_{\Sigma}=-\delta_+Q_{\Sigma}$) can be expressed recursively in terms of:
\begin{itemize}
\item[(a)] only the flux variations $\delta_+Q_{\Sigma^\ell}, \delta_+Q_{\Sigma^r}$ (and $\delta_-Q_{\Sigma^\ell}, \delta_-Q_{\Sigma^r}$) of the mothers if there has been no change of level (no interaction with $w$-waves); 
\item[(b)] the sum of the flux variations $\delta_+Q_{\Sigma^\ell}, \delta_+Q_{\Sigma^r}$ (and $\delta_-Q_{\Sigma^\ell}, \delta_-Q_{\Sigma^r}$) of the mothers and a term which is proportional to the jump of $w$, namely $(w^{k}-w^{k-1})$, if there has been an interaction with a $w$-wave (which is responsible of the change of level from $\Sigma=\Sigma_k$ for some $k\ge 1$ to $\Sigma_{k-1}$).
 \end{itemize}
\end{lemma}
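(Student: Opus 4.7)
The plan is to argue by case analysis on the type of collision in the backward wave tree that gives birth to the daughter $\Sigma$. By Definition \ref{def:order-waves} and Figure \ref{fig:conf1-conf2}, only two kinds of interactions produce a $\rho$-wave as a daughter: (a) the interaction of two $\rho$-waves, in which case no level change occurs; and (b) the interaction of a $\rho$-wave with a $w$-wave, which is responsible for the level change from $\Sigma_k$ to $\Sigma_{k-1}$ when the tree is traversed backwards in time. Since $\delta_- Q_\Sigma = -\delta_+ Q_\Sigma$ by \eqref{def:FVsigma}, it suffices to verify the identities for $\delta_+$ and then change sign.

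For case (a), all three involved states share the common $w$-value $w^- = w^+$. Letting $\Sigma^\ell = (U^-, U^*)$ and $\Sigma^r = (U^*, U^+)$ with $U^* = (\rho^*, w^-)$, a one-line telescoping identity gives
\begin{equation*}
\delta_+ Q_\Sigma = Q(\rho^+, w^-) - Q(\rho^-, w^-) = \delta_+ Q_{\Sigma^\ell} + \delta_+ Q_{\Sigma^r},
\end{equation*}
which is exactly the claim of (a).

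For case (b), Lemma \ref{lemma:VelOndeRho} and the subsequent remark rule out all admissible pre-interaction configurations except the one in Figure \ref{fig:conf1-conf2} (left), where the $w$-wave lies behind the $\rho$-wave. Denote the pre-interaction states by $U^{k-1,-}$ (with $w = w^{k-1}$) and $U^{k,\pm}$ (with $w = w^k$). The interaction produces the daughter $\Sigma = (U^{k-1,-}, U^{k-1,+})$ followed by a new $w$-wave $\Sigma^{w,n} = (U^{k-1,+}, U^{k,+})$, where the intermediate state $U^{k-1,+}$ is uniquely characterized by $w = w^{k-1}$ and $V(U^{k-1,+}) = V(U^{k,+})$, its existence being guaranteed by Proposition \ref{prop:Umorto}. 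Equating the total flux jump from $U^{k-1,-}$ to $U^{k,+}$ computed along the pre- and post-interaction fronts yields the exact identity
\begin{equation*}
\delta_+ Q_\Sigma = \delta_+ Q_{\Sigma^\ell} + \delta_+ Q_{\Sigma^r} - \delta_+ Q_{\Sigma^{w,n}},
\end{equation*}
where $\Sigma^\ell$ denotes the $w$-wave mother and $\Sigma^r$ the $\rho$-wave mother. Since $V$ is preserved across any $w$-wave, $\delta_+ Q_{\Sigma^{w,n}} = (\rho^{k,+} - \rho^{k-1,+})\, V(U^{k,+})$, and Lemma \ref{lem:orderV} expresses $\rho^{k,+} - \rho^{k-1,+}$ as $-(\partial_w V/\partial_\rho V)(w^{k-1} - w^k)$ evaluated at intermediate points. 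Hence $-\delta_+ Q_{\Sigma^{w,n}}$ takes the form $\kappa\,(w^k - w^{k-1})$ with $|\kappa|$ bounded by the hypotheses \ref{q2}--\ref{q3} and \ref{v2}--\ref{v3}, proving (b).

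The main obstacle is the bookkeeping of the post-interaction configuration in case (b), and in particular exhibiting the leftover $w$-wave term as a bounded multiple of $w^k - w^{k-1}$ uniformly over the admissible range of $(\rho, w)$; both points are taken care of by Proposition \ref{prop:Umorto} (existence and uniqueness of $U^{k-1,+}$) and by the mean-value argument already used in the proof of Lemma \ref{lem:orderV}. Applying the one-step identities recursively along each branch of the backward wave tree then reconstructs $\delta_\pm Q_\Sigma$ as a combination of flux variations of the level-$1$ waves plus one term proportional to the $w$-jump at each level change, which is precisely the recursive structure claimed by the lemma.
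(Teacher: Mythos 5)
Your proof is correct and follows essentially the same route as the paper's: a telescoping flux identity across the pre- and post-interaction configurations (trivial in the pure $\rho$-wave case), followed by conversion of the $w$-wave flux jumps into terms proportional to $w^{k}-w^{k-1}$ via the mean-value argument of Lemma \ref{lem:orderV}. The only differences are cosmetic — your level indexing is reversed relative to Definition \ref{def:order-waves}, and you explicitly convert only the outgoing $w$-wave term $\delta_+Q_{\Sigma^{w,n}}$, whereas the paper also rewrites the mother $w$-wave's contribution $\delta_+Q_{\Sigma^\ell}=(\rho^{k,-}-\rho^{k-1,-})\,V(\rho^{k,-},w^{k})$ as a $w$-jump term by the identical argument, so that the recursion propagates only through $\rho$-waves down to the level-one waves, as needed in Proposition \ref{lemma:stima}.
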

\begin{proof}
To trace the \textit{backward wave tree}, we distinguish the two following situations: 
\begin{itemize}
\item[(i)] (Figure \ref{fig:conf1-conf2}, Left): $\Sigma_{k}=(U^{k, -},U^{k, +})$ has been generated by the interaction of a $w$-wave  $\Sl_{k-1}=(U^{k, -},U^{k-1, -})$ and a $\rho$-wave $\Sr_{k-1}=(U^{k-1, -},U^{k-1, +})$, traveling one behind the other (as showed in Lemma \ref{lemma:VelOndeRho}, $\rho$-waves are slower than $w$-waves). It then holds $V(\rho^{k,-},w^{k,-})=V(\rho^{k-1,-},w^{k-1, -})$ and $w^{k-1, -}=w^{k-1, +}=w^{k-1}$. The interaction of these two waves generates not only $\Sigma_{k}$ but also a $w$-wave $(U^{k, +},U^{k-1, +})$ with $w^{k,-}=w^{k,+}=w^{k}$ and $V(\rho^{k,+},w^{k})=V(\rho^{k-1,+},w^{k-1})$.
Then
\begin{align*}
Q(\rho^{k,+},w^{k}) - Q(\rho^{k,-},w^{k}) &= (\rho^{k,+}-\rho^{k-1,+})V(\rho^{k,+},w^{k}) + Q(\rho^{k-1,+},w^{k-1})-Q(\rho^{k-1,-},w^{k-1})
\\
& \quad + (\rho^{k-1,-}-\rho^{k,-})V(\rho^{k,-},w^{k}),
\end{align*}
where, by Lemma \ref{lem:monotone}, it holds that $\rho^{k,-} < \rho^{k,+}$ if and only if $\rho^{k-1,-}<\rho^{k-1,+}$. That is: the daughter $\rho$-wave 
$(U^{k, -},U^{k, +})$ is a shock (rarefaction) if and only if the mother $\rho$-wave $(U^{k-1, -},U^{k-1, +})$ is a shock (rarefaction) as well. 
Moreover, by Lemma \ref{lem:orderV}, we can write, for some $\tilde w_1, \tilde w_2$ between $w^{k-1}$ and $w^{k}$, some $\tilde \rho_1$ between $\rho^{k-1, +}$ and $\rho^{k, +}$ and some $\tilde \rho_2$ between $\rho^{k-1, -}$ and $\rho^{k, -}$ that
\begin{align*}
\delta_+Q_{\Sigma_{k}} =& 
- \frac{\de_w V (\rho^{k-1,+}, \tilde w_1)}{\de_\rho V(\tilde \rho_1, w^{k})}V(\rho^{k,+},w^{k}) (w^{k}-w^{k-1})
+ Q(\rho^{k-1,+},w^{k-1})-Q(\rho^{k-1,-},w^{k-1})
\\
& - \frac{\de_w V (\rho^-_{k}, \tilde w_2)}{\de_\rho V(\tilde \rho_1, w^{k-1})}V(\rho^{k,-},w^{k-1}) (w^{k-1}-w^{k})
\\
=&  (w^{k} - w^{k-1})\left(- \frac{\de_w V (\rho^{k-1,+}, \tilde w_1)}{\de_\rho V(\tilde \rho_1, w^{k})}V(\rho^{k,+},w^{k}) + \frac{\de_w V (\rho^-_{k}, \tilde w_2)}{\de_\rho V(\tilde \rho_1, w^{k-1})}V(\rho^{k,-},w^{k-1})\right)
\\
& + \delta_+Q_{\Sigma_{{k-1}}}.
\end{align*}

\item[(ii)] (Figure \ref{fig:conf1-conf2}, Right): $\Sigma$ (no need of subscript $k$ in this case) has been generated by the interaction of two consecutive $\rho$-waves $\Sl:=(U^-,U^*)$ and $\Sr:=(U^*,U^+)$.
In this case, we can directly write  
\begin{align*}
 \delta_+Q_{\Sigma} &= Q(\rho^+,\cdot)-Q(\rho^*,\cdot)
+Q(\rho^*,\cdot)-Q(\rho^-,\cdot) = \delta_+Q_{\Sl} + \delta_+Q_{\Sr}. 
\end{align*}
\end{itemize}
The process works recursively by applying the same reasoning: in Case (i), to the right wave $\Sr_{k-1}$; in Case (ii), applying it to both waves $\Sl$ and $\Sr$.
\end{proof}


The following proposition provides the most general estimate of the flux variation due to returning waves, without distinguishing between incoming and outgoing roads. Refined estimates specific to incoming and outgoing roads will be presented later.

\begin{prop}\label{lemma:stima} Let $\Sret$ be a returning wave of   original and absorption times $t_o$ and $t_a$ respectively and of order $K$, according to Definition \ref{def:order-waves}.
The flux variation $\delta_+ Q_{\Sret}$ ($\delta_- Q_{\Sret} $) at the absorption time $t_a$ of $\Sret$ on an incoming road (outgoing road) is estimated as follows:
\begin{align}\label{eq:deltaQret<}
\delta_+ Q_{\Sret}   \leq &  
\,C_* TV_{tree}^K + \sum_{r=1}^{N_\rho^{t_o}} [\delta_+Q_{\Sigma_{r}}]_+,
\end{align}
where
\begin{align}\label{def:tvtree}
TV_{tree}^K:=\sum_{k=2}^K  \left|w^{k} - w^{k-1}\right|, \qquad N_\rho^{t_o}=\mathrm{card}(\{\Sigma : \Sigma \; \text{is of level 1 at the original time} \; t_o\}),
\end{align}
and
\begin{equation}\label{eq:Cstar}
C_*:= 2\,\sup_{\rho,\rho', \rho'' \in [0, \rho_\text{max}]}\sup_{w, w' \in [w_L, w_R]}\left(- \frac{\de_w V(\rho, w)  }{\de_\rho V(\rho', w') }\right) \cdot V(\rho'', w')>0
\end{equation}
and $[\cdot]_+$ denotes the positive part.
\end{prop}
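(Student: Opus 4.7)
The plan is to establish \eqref{eq:deltaQret<} by strong induction on the order $K$ of the returning wave $\Sret$, recursively applying the decomposition of $\delta_+ Q_\Sigma$ provided by Lemma \ref{lemma:deltaQ} and tracing the contributions back through the wave tree until the terminal level-$1$ waves at time $t_o$ are reached. I will only handle the incoming-road estimate; the outgoing case follows by the symmetric argument with $\delta_-$ replacing $\delta_+$.

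For the base case $K=1$ the backward tree of $\Sret$ contains no $w$-interaction, so $TV_{tree}^{1}=0$ and iterating case (a) of Lemma \ref{lemma:deltaQ} produces the telescoping identity $\delta_+ Q_{\Sret}=\sum_{r=1}^{N_\rho^{t_o}}\delta_+ Q_{\Sigma_r}$, which is majorized by $\sum_{r=1}^{N_\rho^{t_o}}[\delta_+ Q_{\Sigma_r}]_+$. For the inductive step I look at the top node of the backward tree. If the top interaction is of $\rho$-$\rho$ type (case (a) of Lemma \ref{lemma:deltaQ}), then $\delta_+ Q_{\Sret}=\delta_+ Q_{\Sl}+\delta_+ Q_{\Sr}$, both mothers are returning $\rho$-waves whose backward sub-trees partition the leaves of the original tree and inherit its level structure; I apply the inductive hypothesis to each and collect the bounds. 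If the top interaction is of $w$-$\rho$ type (case (b) of Lemma \ref{lemma:deltaQ}), then $\delta_+ Q_{\Sret}=\delta_+ Q_{\Sigma^{r}_{K-1}}+(w^K-w^{K-1})F$, where the prefactor $F$ is bounded by $C_*$ thanks to the explicit expression derived in the proof of Lemma \ref{lemma:deltaQ} together with the definition \eqref{eq:Cstar}; since $\Sigma^{r}_{K-1}$ is a returning wave of order $K-1$, the inductive hypothesis yields $\delta_+ Q_{\Sigma^{r}_{K-1}}\leq C_* TV_{tree}^{K-1}+\sum_{r}[\delta_+ Q_{\Sigma_r}]_+$, and telescoping closes the step via $TV_{tree}^K=TV_{tree}^{K-1}+|w^K-w^{K-1}|$.

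The main obstacle is the structural bookkeeping required to make $TV_{tree}^K$ well-defined and to avoid overcounting the $w$-interaction contributions across possibly branching sub-trees. The first point rests on verifying that every level-$k$ ancestor of $\Sret$ carries the same $w$-value $w^k$, which follows because $w$ is preserved along each $\rho$-wave and across $\rho$-$\rho$ interactions, and changes by exactly $w^k-w^{k-1}$ across a $w$-interaction raising the level from $k-1$ to $k$. The second point is more delicate: since multiple $\rho$-ancestors may independently cross ``the same'' $w$-wave of level transition $k-1\to k$ in different branches, I need to organize the induction so that only the first $w$-interaction encountered while descending the tree contributes the term $C_*|w^k-w^{k-1}|$, while the $\rho$-$\rho$ branchings above it are resolved by case (a) and absorbed into the additive splitting of $\delta_+ Q$; the residual leaf contributions from the additional branches are then controlled by the positive-part sum $\sum_{r}[\delta_+ Q_{\Sigma_r}]_+$ on the right-hand side of \eqref{eq:deltaQret<}.
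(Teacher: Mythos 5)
Your proposal is correct and follows essentially the same route as the paper: both arguments unfold the backward wave tree by recursively applying Lemma \ref{lemma:deltaQ}, collecting a term bounded by $C_*|w^{k}-w^{k-1}|$ at each change of level and estimating the residual level-$1$ contributions at time $t_o$ by their positive parts. The paper simply states the resulting telescoped identity \eqref{eq:deltaQret=} directly rather than organizing the recursion as an induction on $K$, and it leaves implicit the branching bookkeeping that you rightly flag as the delicate point.
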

\begin{remark}
    Notice that $TV_{tree}^K$ takes into account the $w$-variation of only a fixed number $K$ of waves, where $K$ is the number of levels (or, equivalently, the number of $w$-waves) in the backward wave tree of $\Sret$ (see Definition \ref{def:order-waves}). Moreover, $N_\rho^{t_o}$ is the number of waves at the root of the backward wave tree of $\Sret$.
\end{remark}
\begin{proof}
Applying Lemma \ref{lemma:deltaQ} to the returning wave $\Sret$ yields 
\begin{align}\label{eq:deltaQret=}
\delta_+ Q_{\Sret}  = &  
\sum_{k=2}^K  C_{(k-1,k)}\, \left(w^{k} - w^{k-1}\right)
+ \sum_{r=1}^{N_\rho^{t_o}} \delta_+Q_{\Sigma_{r}}.
\end{align}
In fact, when tracing the backward wave tree of $\Sret$ from $t_a$ to $t_o$, both types of iterations described in Lemma \ref{lemma:deltaQ} occur multiple times, leading to the equality \eqref{eq:deltaQret=}. Notably, one of the $\rho$-waves emanates from the junction. The constants $C_{(k-1, k)}$ are explicitly specified in Lemma \ref{lemma:deltaQ}, and straightforward calculations allow us to estimate them by $C_*$. Consequently, we obtain \eqref{eq:deltaQret<}.
\end{proof}

\subsubsection{Refined estimates of flux variation due to returning waves on incoming roads}
Below, we present estimates of the flux variation at the junction due to a returning wave from an incoming road. As mentioned in Remark \ref{remark:ret-onde-rho}, these returning waves are exclusively $\rho$-waves. For possible configurations, see Figure \ref{fig:inRet}.


\begin{prop}\label{prop:ret-wave-in}
Let $\Sret_{i}=(U^{R,-}_i, U^{R, +}_i)$, with $U^{R, \pm}_i=(\rho^{R, \pm}_i, w^{R, \pm}_i)$ and $w^{R,-}_i=w^{R,+}_i$, be a returning wave traveling on an incoming road $i \in \{1, \cdots, n\}$ and interacting with the junction $J$ at time $t_a$. 
\begin{itemize}
\item If one of the following configurations occurs: 
\begin{itemize}
\item[(a)] $\Sret_{i}$ is a shock wave;
\item[(b)] $\Sret_{i}$ is a rarefaction wave with $\widetilde\rho^{R,-}_i < \rho^{R, +}_i < \sigma (w^{R, +}_i)=\sigma (w^{R, -}_i)<\rho^{R, -}_i$, where $\widetilde\rho^{R,-}_i$ is determined by the identity $Q(\widetilde\rho^{R,-}_i,w^{R,-}_i) = Q(\rho^{R,-}_i,w^{R,-}_i)$;
\item[(c)] the backward characteristic tree of the returning wave $\Sret_{i}$  includes only $\rho$-waves,
\end{itemize}
 then it holds
\begin{align}\label{eq:ret-in-neg}
\delta_- Q_{\Sret_{i}}=Q(\rho^{R, -}_i, w^{R, -}_i)- Q(\rho^{R, +}_i, w^{R, +}_i)<0.
\end{align}
\item If $\Sret_{i}$ is a rarefaction wave with $\rho^{R, +}_i < \widetilde\rho^{R,-}_i < \sigma (w^{R,-}_i)<\rho^{R, -}_i$, where $\widetilde\rho^{R,-}_i$ is determined by the identity $Q(\widetilde\rho^{R,-}_i,w^{R,-}_i) = Q(\rho^{R,-}_i,w^{R,-}_i)$,and 
it is generated by the interaction of a $w$-wave $\Sl=(U^{0,-}_i, U^{0,+}_i)$ (traveling along the arc) with $U^{0,-}_i=U^{R,-}_i$ ($w^{0,-}_i = w^{R, -}_i$) and the $\rho$-wave $\Sr=(U^{0,+}_i, \hat U_i)$ originated from the junction at time $t_o$, then 
   \begin{equation}
       0<\delta_- Q_{\Sret_{i}}=Q(\rho^{R, -}_i, w^{R, -}_i)- Q(\rho^{R, +}_i, w^{R, +}_i)< C_\star |w^{0, +}_i-w^{0,-}_i|. \label{eq:ret-inc-case2}
   \end{equation}
\item In the most general case where $\Sret_{i}$ is a rarefaction wave such that $\delta_- Q_{\Sret_{i}}>0$,
then \eqref{eq:deltaQret<} holds, i.e.
\begin{align}\label{eq:prop_in_stima}
\delta_- Q_{\Sret_i}   \leq &  
\,C_* TV_{tree}^K + \sum_{r=1}^{N_\rho^{t_o}} [\delta_-Q_{\Sigma_{r}}]_+,
\end{align}
 where $TV_{tree}^K$, $N_\rho^{t_o}$ are given in \eqref{def:tvtree} and $C_*$ is \eqref{eq:Cstar}.
\end{itemize}
\end{prop}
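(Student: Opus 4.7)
The plan is to treat each of the three bullets separately, exploiting Proposition \ref{prop:incoming} on the structure of admissible states on incoming roads together with the recursive flux decomposition of Lemma \ref{lemma:deltaQ}.

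For the first bullet, the goal is to show $\delta_- Q_{\Sret_i}<0$. In case (a) the returning wave is a shock with positive speed, so the Lax entropy condition $\rho^{R,-}_i<\rho^{R,+}_i$ together with the Rankine-Hugoniot formula $s=(Q^+-Q^-)/(\rho^+-\rho^-)>0$ forces $Q(U^{R,+}_i)>Q(U^{R,-}_i)$. Case (b) follows from the strict concavity of $Q(\cdot,w^{R,+}_i)$ (property \ref{q2}): the hypothesis places $\rho^{R,+}_i$ strictly between $\widetilde\rho^{R,-}_i$ and $\sigma(w^{R,+}_i)$ on the left (increasing) branch, where $Q(\cdot,w^{R,+}_i)>Q(\widetilde\rho^{R,-}_i,w^{R,-}_i)=Q(\rho^{R,-}_i,w^{R,-}_i)$. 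Case (c) reduces to the scalar LWR setting: a backward tree containing only $\rho$-waves preserves $w$ along every branch, so the standard LWR returning-wave argument (cf.\ the analogous analysis in \cite{dellemonache2018CMS}) yields $\delta_- Q_{\Sret_i}\le 0$.

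For the second bullet (Case 2), the returning rarefaction arises from a single interaction between the $w$-wave $\Sl=(U^{R,-}_i,U^{0,+}_i)$ and the junction-emitted $\rho$-wave $\Sr=(U^{0,+}_i,\hat U_i)$. Applying Lemma \ref{lemma:deltaQ}, case (i), with $k=2$, and using $w^{R,+}_i=w^{R,-}_i=w^{0,-}_i$, one obtains
\[
\delta_- Q_{\Sret_i} \;=\; C_{(0,R)}\,(w^{0,+}_i-w^{0,-}_i)\;+\;\delta_- Q_{\Sr},\qquad |C_{(0,R)}|\le C_*,
\]
with $C_*$ the constant in \eqref{eq:Cstar}. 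The geometric configuration $\rho^{R,+}_i<\widetilde\rho^{R,-}_i<\sigma(w^{R,-}_i)<\rho^{R,-}_i$, combined with the $V$-invariance across $\Sl$, forces the junction wave $\Sr$ to fall in the rarefaction branch of Proposition \ref{prop:incoming} (Case~2, with $\hat\rho_i<\rho^{0,+}_i$, both on the right branch). Monotonicity of $Q$ on the right branch then yields $\delta_- Q_{\Sr}=Q(U^{0,+}_i)-Q(\hat U_i)\le 0$, and \eqref{eq:ret-inc-case2} follows with $C_\star:=C_*$.

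For the third bullet, the most general rarefaction case with $\delta_- Q_{\Sret_i}>0$, one invokes Proposition \ref{lemma:stima} directly: the estimate $\delta_+ Q_{\Sret}\le C_*\,TV_{tree}^K+\sum_r [\delta_+ Q_{\Sigma_r}]_+$ is proved there without distinction between incoming and outgoing roads, and switching sign via $\delta_-=-\delta_+$ on incoming roads yields \eqref{eq:prop_in_stima}. The main obstacle I anticipate lies in the geometric step of the second bullet: rigorously verifying that the strict inequalities on $\rho^{R,\pm}_i$ and $\widetilde\rho^{R,-}_i$ propagate through the interaction and pin $\Sr$ into the rarefaction branch of Proposition \ref{prop:incoming}. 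This requires a careful bookkeeping of the $V$-level set jump across $\Sl$, the Riemann structure of the interaction itself, and the admissibility constraints for junction-emitted $\rho$-waves on incoming roads.
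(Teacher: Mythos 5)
Your proposal is correct and follows essentially the same route as the paper: case (a)/(b) by the shock/concavity argument, case (c) by reduction to the LWR setting of \cite{dellemonache2018CMS} (where the paper additionally notes that $\rho^{R,+}_i>\sigma(w^{R,+}_i)$ and positive speed force $\Sret_i$ to be a shock, so it falls back into case (a) with strict inequality), the second bullet via Lemma \ref{lemma:deltaQ}(i) with $k=2$ plus the sign of the mother wave, and the third bullet by direct appeal to Proposition \ref{lemma:stima}. The ``geometric step'' you flag as the main obstacle is resolved in the paper exactly as you anticipate, by Lemma \ref{lem:monotone} (giving $\hat\rho_i<\rho^{0,+}_i$ strictly from $\rho^{R,+}_i<\rho^{R,-}_i$) together with $\hat\rho_i>\sigma(w^{0,+}_i)$ for junction-emitted waves, which also upgrades your $\delta_-Q_{\Sr}\le 0$ to the strict inequality needed for \eqref{eq:ret-inc-case2}.
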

\begin{proof}
We will prove the statements of the propositions point by point.
\begin{itemize}
\item First, if the returning wave $\Sret$ defined in Definition \ref{def:returnwave}, is a shock with positive speed then $\rho^{R, -} < \rho^{R, +}$ and $Q(\rho^{R, -}, w^{R, -}) < Q(\rho^{R, +},w^{R, +})$, from which \eqref{eq:ret-in-neg} follows. Likewise if $\Sret$ is a rarefaction with the ordering  $\widetilde\rho^{R,-} < \rho^{R, +} < \sigma (w^{R, +})=\sigma (w^{R, -})<\rho^{R, -}$.
\noindent Next, if the backward characteristic tree of $\Sret_i$ consists solely of $\rho$-waves coming from the left (interacting with the waves forming the backward characteristic branch of $\Sret_i$), then we are in the case of a standard LWR (Lighthill-Whitham-Richards) model, and we can rely on \cite[Proposition 4.1]{dellemonache2018CMS}. In fact, 
by Definition \ref{def:returnwave}, in the backward characteristic branch of the returning wave $\Sret_i$, there exists a wave which was emanated from the junction at the original time $t_o$. By the definition of original time $t_o$ (maximum time at which a wave from the backward branch of $\Sret_i$ originated from $J$), the value $U^{R, +}$ coincides with the right value of 
such a $\rho$-wave generated at $J$ at $t_o$, and then $\rho^{R, +} > \sigma (w^{R, +})$. Since $\Sret_i$ has a positive speed, we deduce that $\rho^{R, -} < \sigma (w^{R, +})$, from which it follows that $\rho^{R, -}-\rho^{R, +}<0.$ This implies that $\Sret_i$ is a shock, and therefore \eqref{eq:ret-in-neg} holds. 
\item This is a special case of Lemma \ref{lemma:deltaQ}-(a), where the only $\rho$-waves involved in the backward wave tree of the returning wave $\Sret_{i}$ are the ones originated from the junction. Then, it holds
    \begin{align*}
Q(\rho^{R,-},w^{R,-}) - Q(\rho^{R,+},w^{R,+}) &= (\rho^{R,-}-\rho^{0,+})V(\rho^{R,-},w^{0,-}) + Q(\rho^{0,+},w^{0,+})-Q(\hat\rho_i,w^{0,+})
\\
& \quad + (\hat\rho_i-\rho^{R,+})V(\rho^{R,+},w^{0,+}).
\end{align*}
    Since $\Sr$ originated from the junction, then $\hat\rho_i>\sigma(w^{R,+}_i)$. Moreover,
    the condition $\rho^{R,+}_i < \rho^{R, -}_i$ implies $\hat\rho_i<\rho^{0,+}_i$ by Lemma \ref{lem:monotone} and $Q(\rho^{0,+},w^{0,+})-Q(\hat\rho_i,w^{0,+}) < 0$. Hence \eqref{eq:ret-inc-case2}.
\item Finally, in the most general case, if the returning wave is a rarefaction with $Q(\rho^{R, -}, w^{R, -}) - Q(\rho^{R, +},w^{R, +})>0$, by applying Proposition \ref{lemma:stima} we get \eqref{eq:prop_in_stima}.
\end{itemize}
\end{proof}
%
%

\subsubsection{Refined estimates of flux variation due to returning waves on outgoing roads}
Below, we discuss the flux variation at the junction caused by a returning wave on an outgoing road. For possible configurations, see Figure \ref{fig:outRet}.
\begin{prop}\label{prop:ret-wave-out}
Let $\Sret_j=(U^{R,-}_j, U^{R, +}_j)$, with $U^{R, \pm}_j=(\rho^{R, \pm}_j, w^{R, \pm}_j)$ and $w^{R,-}_j=w^{R,+}_j=w^{R}_j$, be a returning wave traveling on an outgoing road $j \in \{n+1, \cdots, n+m\}$ and interacting with the junction $J$ at time $t_a$. Then it is a shock wave such that 
\begin{align}\label{eq:ret-out-neg}
\delta_+ Q_{\Sret_j}=Q(\rho^{R, +}_j, w^{R, +}_j)- Q(\rho^{R, -}_j, w^{R, -}_j)<0.
\end{align}
\end{prop}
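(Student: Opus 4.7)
The plan is to determine the left state $U^{R,-}_j$ that the returning wave carries into the junction at time $t_a$, argue it must satisfy $\rho^{R,-}_j \le \sigma(w^R_j)$, and then extract both the shock character of $\Sret_j$ and the sign of the flux jump from the sign of the wave speed.

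First I will identify $U^{R,-}_j$ with the state $\umenoS_j$ produced by the most recent application of the Riemann solver at $J$ on the outgoing road $j$. Between two consecutive $J$-updates no new wave is emitted from $J$ on road $j$, and no wave crosses $J$ from the right -- waves arriving from the right are precisely absorbed at the next $J$-update -- so the state in a small neighborhood to the right of $J$ remains constant and equal to $\umenoS_j$ up to the instant when $\Sret_j$ reaches $J$. Proposition \ref{prop:outgoing} then guarantees, in all admissible configurations (i)-1, (i)-2 and (ii), that $\rhomenoS_j \le \sigma(\wmenoS_j)$, using also that in case (i)-2 the upper bound $\wrhomorto(\wmorto)$ is strictly less than $\sigma(\wmorto)$ by the strict concavity of $Q(\cdot,w)$. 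Hence $\rho^{R,-}_j \le \sigma(w^R_j)$.

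Next, by Remark \ref{remark:ret-onde-rho}, $\Sret_j$ is a $\rho$-wave, so $w^{R,-}_j = w^{R,+}_j = w^R_j$ and the wave is either a shock or a rarefaction. I will rule out the rarefaction case by contradiction: if $\Sret_j$ were a rarefaction, then $\rho^{R,+}_j < \rho^{R,-}_j \le \sigma(w^R_j)$, so every intermediate density in the rarefaction fan belongs to $[0, \sigma(w^R_j)]$. By hypotheses \ref{q1} and \ref{q2}, on this interval $\lambda_{1}(\rho, w^R_j) = Q_\rho(\rho, w^R_j) \ge 0$; hence every front of the rarefaction fan travels with non-negative speed and cannot reach $J$ from the right, contradicting the absorption of $\Sret_j$ at $J$ at time $t_a$.

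Therefore $\Sret_j$ must be a shock with $\rho^{R,-}_j < \rho^{R,+}_j$, and its Rankine-Hugoniot speed $s = (Q^{R,+}_j - Q^{R,-}_j)/(\rho^{R,+}_j - \rho^{R,-}_j)$ is strictly negative, since $\Sret_j$ must travel toward $J$. Combined with $\rho^{R,+}_j - \rho^{R,-}_j > 0$, this forces $Q^{R,+}_j < Q^{R,-}_j$, which is exactly $\delta_+ Q_{\Sret_j} < 0$ as required by \eqref{eq:ret-out-neg}. The main obstacle I foresee is the first step: making rigorous that $U^{R,-}_j$ coincides with the last $\umenoS_j$ generated at $J$ requires careful bookkeeping of $J$-updates in the WFT scheme, in particular to exclude that another wave modifies the state adjacent to $J$ between the last $J$-update and $t_a$. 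Once this is in place, the remainder is immediate from the strict concavity of $Q(\cdot, w)$ and the Rankine-Hugoniot condition.
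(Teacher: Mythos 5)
Your proof is correct and follows essentially the same route as the paper's: identify the left state of the returning wave with the (subcritical) trace produced at the junction on the outgoing road via Proposition \ref{prop:outgoing}, then use the negative speed to force the wave to be a shock with $\rho^{R,-}_j<\rho^{R,+}_j$ and hence a negative flux jump by Rankine--Hugoniot. The only cosmetic difference is that you exclude the rarefaction case explicitly via the sign of $\lambda_1$ on $[0,\sigma(w^R_j)]$, whereas the paper deduces $\rho^{R,+}_j>\sigma(w^R_j)$ directly from the negative speed; the substance is identical.
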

\begin{proof}
First recall that by Proposition \ref{prop:outgoing}, on outgoing roads we do not allow vertical shocks to occur at the junction. Then, the waves always come out of the junction in pairs, first the $w$-wave and then the $\rho$-wave, or only a $\rho$-wave is generated. 
Therefore, by the definition of original time $t_o$ (maximum time at which a wave from the backward branch of $\Sret_i$ originated from J), the value $U^{R, -}_j$ coincides with the left value of 
the $\rho$-wave generated at $J$ at $t_o$ (traveling alone or preceded by a $w$-wave), and then $\rho^{R, -}_j < \sigma (w^{R}_j)$. Since $\Sret_j$ has a negative speed, we deduce that $\rho^{R, +}_j > \sigma (w^{R}_j)$, from which it follows that $\rho^{R, -}_j<\rho^{R, +}_j$. This implies that $\Sret_j$ is a shock, and therefore \eqref{eq:ret-out-neg} holds. 
\end{proof}

\bigskip

\section{Existence of solution to the Cauchy problem}\label{sec:theoEx}
Given initial data of bounded variation, one can solve Cauchy problems by constructing approximate solutions via Wave Front Tracking (WFT). To prove the convergence of WFT approximations, it is necessary to estimate the number of waves, the number of wave interactions, and to provide estimates on the total variation of the approximate solutions. 
We provide the following existence result.

\begin{theorem}\label{thm:esistenza}
Let us consider a junction $J$ with $n$ incoming and $m$ outgoing roads $I_{\road}=[a_{\road},b_{\road}]\subset\R$, $\road = 1,\dots,n+m$, possibly with $a_{\road}=-\infty$ and $b_{\road}=+\infty$. Consider the network identified by the couple $(\edge,\vert)$ where $\edge$ is a finite collection of roads $I_{\road}$, $s=1, \cdots, n+m$ - specifically, $n$ incoming and $m$ outgoing roads - and $\vert$ is a finite collection of junctions $J$.
If a Riemann Solver $\rsolv$ in Definition \ref{def:rsolv} satisfies properties \ref{rsp1} -- \ref{rsp4}, then the collection of $n+m$ systems of equations for each road indexed by $s$ \eqref{eq:cauchyWFT}, endowed with initial data $(\rho_s(x, 0), y_s(x,0))$ belonging to the space of functions with bounded variation of each road $BV(I_\road)$ - where $y_s=\rho_s w_s$ - admits an entropy weak solution on the network $(\edge,\vert)$ in the sense of Definition \ref{def:sol-net}.
\end{theorem}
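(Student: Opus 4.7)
The plan is to follow the classical wave-front-tracking (WFT) strategy of \cite{garavello2006AIHP,dellemonache2018CMS}, adapted to the GSOM system. First, I would approximate the BV initial data $(\rho_{s,0},y_{s,0})$ by a sequence of piecewise constant functions $(\rho_{s,0}^\nu,y_{s,0}^\nu)$ with $\tv(\rho_{s,0}^\nu)\le \tv(\rho_{s,0})$ and analogously for $w_{s,0}^\nu$, converging in $\lunoloc$. At each discontinuity inside a road I would solve the classical Riemann problem for \eqref{eq:GSOMsist}, replacing rarefactions by rarefaction fans of step $1/\nu$, while at the junction the Riemann problem is solved using $\rsolvG$ per Definition \ref{def:RSgsom}. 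This produces, for each $\nu$, a piecewise constant approximate solution $U^\nu=(\rho^\nu_s,w^\nu_s)_{s=1}^{n+m}$ defined up to the first interaction time; solutions are then prolonged recursively through interactions along the roads and at $J$.

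The core of the argument is to show that the procedure is well defined on $[0,+\infty)$ and produces a sequence with uniformly bounded total variation. For interactions \emph{along} the roads one uses the standard Glimm-type estimates for the strictly hyperbolic $2\times 2$ system \eqref{eq:GSOMsist}, taking into account that $\luno$ is genuinely nonlinear and $\ldue$ linearly degenerate. For interactions \emph{at} the junction, properties \ref{rsp1}--\ref{rsp4} (proved for the $2\to 2$ case in Theorem \ref{thm:propP} and assumed here) yield, for every wave hitting $J$ at time $\tb$,
\[
\tv_Q(\tb+)-\tv_Q(\tb-) \le C\min\{|\tilde q_s-q_s|,\,|\Gamma(\tb+)-\Gamma(\tb-)|+|\hh(\tb+)-\hh(\tb-)|\},
\]
together with the analogous bound on $\tv_w$ and the monotonicity $\hh(\tb+)\le\hh(\tb-)$ when $\tilde q<q$. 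Following \cite{dellemonache2018CMS}, one defines a Glimm-type functional of the form
\[
\mathcal{F}(t)=\tv_Q(t)+K_1\tv_w(t)+K_2\big(\Gamma(t)+\hh(t)\big)+K_3\,Q(t),
\]
where $Q(t)$ is a quadratic interaction potential, and the constants $K_1,K_2,K_3$ are chosen so that $\mathcal{F}$ is non-increasing across both internal interactions and interactions with $J$, thanks to \ref{rsp1}--\ref{rsp4}. This provides the uniform bound $\tv(\rho^\nu_s(\cdot,t))+\tv(w^\nu_s(\cdot,t))\le M$ for all $\nu$ and $t\ge 0$.

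The delicate point is the treatment of \emph{returning waves}, which in the second-order setting can actually increase the flux at $J$, an effect absent from LWR. Here I would invoke Propositions \ref{prop:ret-wave-in} and \ref{prop:ret-wave-out}: on outgoing roads the flux variation $\delta_+Q_{\Sret_j}$ is non-positive, so these contribute no increase; on incoming roads only the rarefaction case $\rho^{R,+}_i<\wrhomeno^{R,-}_i<\sigma(w^{R,-}_i)<\rho^{R,-}_i$ can yield $\delta_-Q_{\Sret_i}>0$, and then \eqref{eq:ret-inc-case2}--\eqref{eq:prop_in_stima} bound the increase by $C_*TV^K_{tree}+\sum_r[\delta_-Q_{\Sigma_r}]_+$, a quantity already controlled by the $w$-variation of the backward wave tree and by past flux variations at $J$. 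Incorporating these contributions into $\mathcal{F}$ (e.g.\ adding a term proportional to the cumulative positive flux variation produced by past junction waves) closes the estimate and rules out blow-up of the number of fronts in finite time.

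Once uniform BV bounds and finite speed of propagation are established, Helly's compactness theorem yields, along a subsequence, a limit $U=(\rho_s,w_s)$ with $U^\nu\to U$ in $\lunoloc$ and $U(\cdot,t)\in BV(I_s)$ for a.e.\ $t>0$. Standard arguments show that the limit is an entropy admissible weak solution along each road in the sense of Definition \ref{def:entropy}. For the coupling condition at $J$, the conservation $\sum_{i=1}^n Q(\rho_i(b_i-,t),w_i(b_i-,t))=\sum_{j=n+1}^{n+m}Q(\rho_j(a_j+,t),w_j(a_j+,t))$ is inherited from item (1) in Definition \ref{def:rsolv}, which holds pointwise for every approximate solution and passes to the limit using the BV traces. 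The main obstacle throughout is the returning-wave analysis: verifying that the cumulative increase in $\tv_Q$ produced at $J$ by returning waves stays uniformly bounded, which is precisely what the refined estimates of Section \ref{sec:fluxRet} provide.
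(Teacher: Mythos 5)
Your overall architecture (piecewise-constant approximation, Riemann problems solved by $\rsolv$ at $J$, properties \ref{rsp1}--\ref{rsp4} plus the returning-wave propositions, Helly compactness and passage to the limit) matches the paper's, but the step where you actually obtain the uniform BV bound is where your argument has a gap. You propose a Glimm-type functional $\mathcal{F}=\tv_Q+K_1\tv_w+K_2(\Gamma+\hh)+K_3\,\mathcal{Q}$ with a quadratic interaction potential and claim the constants can be chosen so that $\mathcal{F}$ is non-increasing across all interactions. Two problems. First, a quadratic interaction potential controls interactions inside the roads only in the small-total-variation regime of classical Glimm theory, while the theorem carries no smallness assumption; the paper instead exploits the Temple structure of \eqref{eq:GSOM1} (interactions inside a road do not increase the variation of the Riemann invariants and create no new $w$-waves), which is precisely what makes $\tv_\rho(t)$ and $\tv_w(t)$ controllable by their values at $t=0$. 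You never invoke this structure. Second, monotonicity of $\mathcal{F}$ at the junction is incompatible with the phenomenon you yourself flag: returning waves on incoming roads can produce $\delta_- Q_{\Sret_i}>0$, i.e.\ a genuine increase of the flux and of $\hh$ at $J$, and your proposed fix (adding ``a term proportional to the cumulative positive flux variation produced by past junction waves'') is exactly the quantity whose finiteness needs to be established, so as written the argument is circular.

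The paper closes this loop differently: it splits $\mathrm{TV}(\hh)=\mathrm{PV}(\hh)+\mathrm{NV}(\hh)$ and $\mathrm{PV}(\hh)=\mathrm{PV}^{O}(\hh)+\mathrm{PV}^{R}(\hh)$, observes via Proposition \ref{prop:ret-wave-out} that returning waves on outgoing roads contribute only negative variation, and bounds the remaining term $\mathrm{PV}^{R}_{\mathrm{in}}(\hh)$ through Proposition \ref{prop:ret-wave-in} by constants times $\tv_\rho(0)$ and $\tv_w(0)$ --- the Temple structure guaranteeing that the right-hand side of \eqref{eq:prop_in_stima}, summed over all returning waves, involves only the variation of the original waves and hence needs no bootstrapping. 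Once $\mathrm{TV}(\hh)$ is bounded, properties \ref{rsp2} and \ref{rsp3} yield the bound on $\tv_Q$. If you replace your monotone-functional claim by this decomposition (or by any explicit a priori bound on the cumulative positive variation generated at $J$ by returning waves), the remainder of your proof goes through essentially as the paper's does.
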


The following is a direct consequence of the above result and Theorem \ref{thm:propP}.

\begin{corollario}
Consider a network $(\edge,\vert)$ as in Theorem \ref{thm:esistenza}, composed of $n=2$ incoming and $m=2$ outgoing roads. Then the associated Cauchy problem admits an entropy weak solution that can be constructed by a Wave Front-Tracking (WFT) approximation based on the Riemann Solver $\rsolvG$ defined in Section \ref{sec:NinM}. 
\end{corollario}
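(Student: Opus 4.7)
The plan is to follow the now-standard wave-front tracking (WFT) strategy as in \cite{garavello2006AIHP, dellemonache2018CMS}, adapted to the GSOM case where the novel feature is the presence of returning waves analyzed in Section \ref{sec:fluxRet}. First, I would approximate each initial datum $(\rho_{s,0},y_{s,0})\in BV(I_s)$ by a sequence of piecewise-constant functions $(\rho_{s,0}^\nu,y_{s,0}^\nu)$ converging in $\lunoloc$, with $\tv(\rho_{s,0}^\nu)+\tv(w_{s,0}^\nu)\le \tv(\rho_{s,0})+\tv(w_{s,0})$. Along each road I construct a WFT approximation solving interior Riemann problems by the classical algorithm for strictly hyperbolic $2\times 2$ systems (replacing rarefactions by fans of rarefaction shocks of size $1/\nu$), while at the junction $J$ I solve each Riemann problem by means of $\rsolv$ (in particular $\rsolvG$ when $n=m=2$). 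This yields a piecewise-constant approximate solution $(\rho_s^\nu,y_s^\nu)$ as long as finitely many wave fronts and interactions occur up to any finite time $T$.

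The heart of the proof is the derivation of uniform (in $\nu$) bounds on $\tv_Q$ and $\tv_w$ defined in \eqref{eq:funzionali}. For interactions along the arcs, the interaction estimates for the single-road GSOM model (standard Glimm-type estimates for a strictly hyperbolic system with one genuinely nonlinear and one linearly degenerate field) give decay or controlled growth. For interactions at $J$, one distinguishes between ordinary waves hitting the junction for the first time (controlled directly by properties \ref{rsp1}--\ref{rsp4}) and returning waves, for which the refined estimates of Propositions \ref{prop:ret-wave-in} and \ref{prop:ret-wave-out} are used: on outgoing roads they strictly decrease the flux at $J$, while on incoming roads any positive contribution $\delta_-Q_{\Sret_i}$ is controlled either by a jump in $w$ along the backward wave tree or by the flux already emitted at the original time $t_o$. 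The idea is to design a Glimm-type functional of the form
\begin{equation*}
\mathfrak{L}(t) = \tv_Q(t) + \tv_w(t) + K_1\,\hh(t) + K_2\,\mathcal{R}(t),
\end{equation*}
where $\mathcal{R}(t)$ is a returning-wave potential accounting for the pending contributions $C_\star\,TV_{tree}^K$ in \eqref{eq:deltaQret<} for all $\rho$-waves currently traveling on the roads, and $K_1,K_2$ are chosen large enough so that, thanks to \ref{rsp1}--\ref{rsp4} and the returning-wave propositions, $\mathfrak{L}$ is non-increasing at every interaction.

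The hard part will be twofold. First, showing that $\mathcal{R}(t)$ is well defined and bounded: this uses the crucial fact that $w$-waves never cross the junction backward, so the $w$-variation supplied to $TV_{tree}^K$ from all returning waves is controlled by the total $w$-variation injected into the roads initially plus the (already bounded) $w$-variation generated at $J$, which property \ref{rsp4} controls by previous flux variations. Second, one must ensure the approximation remains well-defined on $[0,T]$, namely that interactions are finite and no triple interactions occur; this is handled in the standard way (see \cite{Bressan2000}) by perturbing the strengths of rarefaction fronts at approximation time to avoid simultaneous interactions, and by noting that each interaction either strictly decreases the number of fronts or strictly decreases $\mathfrak{L}$.

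Once the uniform bounds $\tv_Q(t)+\tv_w(t)\le M$ for $t\in[0,T]$ are established, Helly's compactness theorem together with the Lipschitz-in-time estimate of $(\rho_s^\nu,y_s^\nu)$ in $\lunoloc$ (consequence of the uniform TV bounds and the equations) yields, along a subsequence, convergence to a limit $(\rho_s,y_s)\in C([0,T];\lunoloc(I_s)^2)$ of bounded total variation in space. Passing to the limit in the weak formulation inside each arc provides entropy admissibility of $(\rho_s,y_s)$ road by road, while passing to the limit in the flux balance $\sum_{i=1}^n Q(\rho_i^\nu(b_i-,t),w_i^\nu(b_i-,t))=\sum_{j=n+1}^{n+m} Q(\rho_j^\nu(a_j+,t),w_j^\nu(a_j+,t))$ (which holds for all $\nu$ by construction of $\rsolv$ and the trace theorem for BV functions) gives the third condition in Definition \ref{def:sol-net}, concluding the proof.
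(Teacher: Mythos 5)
Your argument is sound in substance, but it takes a structurally different route from the paper. The paper proves this corollary in one line: it is the conjunction of Theorem \ref{thm:esistenza} (existence for any Riemann solver satisfying \ref{rsp1}--\ref{rsp4}) and Theorem \ref{thm:propP} (the statement that $\rsolvG$ satisfies \ref{rsp1}--\ref{rsp4} when $n=m=2$). You instead re-derive the existence result inline: your construction of the WFT approximation, the decomposition of the variation of $\hh$ into original and returning contributions, the use of Propositions \ref{prop:ret-wave-in} and \ref{prop:ret-wave-out}, the appeal to the Temple-class structure to control the $w$-variation feeding $TV_{tree}^K$, and the Helly compactness step all reproduce, in the special case $n=m=2$, the content of the paper's proof of Theorem \ref{thm:esistenza}. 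What the paper's factorization buys is that the general theorem is proved once and the corollary is free; what your route buys is a self-contained account, at the cost of redundancy. Two caveats. First, the one ingredient your proposal genuinely omits is the verification that $\rsolvG$ actually satisfies \ref{rsp1}--\ref{rsp4}: you invoke these properties as if they were given, but for the specific solver $\rsolvG$ this is the content of Theorem \ref{thm:propP}, whose proof occupies the entire appendix (a case-by-case analysis of equilibria and perturbing waves); without citing or reproving it, the corollary does not follow. Second, the pieces you flag as ``the hard part'' --- the returning-wave potential $\mathcal{R}(t)$, the choice of $K_1,K_2$ making $\mathfrak{L}$ non-increasing, and the finiteness of interactions --- are announced rather than executed, so your proposal is a proof plan at roughly the same level of detail as the paper's own (admittedly terse) proof of Theorem \ref{thm:esistenza}, not a completion of it.
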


\begin{proof}[Proof of Theorem \ref{thm:esistenza}]
We adapt the proof of \cite[Theorem 4.1]{dellemonache2018CMS} for scalar equations to the case of systems of equations. It is based on first estimating the total variation in time of $\bar h$ and then that of $\Gamma$. Let $\text{PV}$ and ${NV}$ denote the positive and negative variations of a function, respectively. We have the following relations:
\begin{align}
\text{TV}(\bar h) &= \text{PV}(\bar h) + \text{NV}(\bar h), \\
\text{PV}(\bar h) &= \text{PV}^O(\bar h) + \text{PV}^R(\bar h),
\end{align}
where $\text{PV}^O$ is the variation due to interactions of the original waves with the junction $J$, and $\text{PV}^R$ is the variation due to returning waves as in Definition \ref{def:returnwave}. Observe from Proposition \ref{prop:ret-wave-out} that returning waves on outgoing roads always generate a negative variation of the flux $Q$ at the junction. By property \ref{rsp3}, the function $\bar h$ on outgoing roads due to returning waves decreases, and therefore its variation is only negative, i.e., 
\[
\text{TV}_\text{out}^R(\bar h) = \text{NV}_\text{out}^R(\bar h).
\]
As a consequence, we estimate the positive variation due to returning waves only for incoming roads, $\text{PV}_\text{in}^R(\bar h)$. We can thus rely on Proposition \ref{prop:ret-wave-in}, yielding, for some constant $C_1$:
\begin{align*}
\text{PV}_\text{in}^R(\bar h) &\le \left(V^\text{max} + \sup_{(\hat  \rho, \hat w) \in (0, \rho_\text{max}] \times [w_L, w_R]}  \frac{|f(\hat \rho, \hat w)|}{\hat \rho^2}\right) \text{TV}_{\rho}(t) + C_* \text{TV}_w(t) \\
&\le C_1 \left(V^\text{max} + \sup_{(\hat  \rho, \hat w) \in (0, \rho_\text{max}] \times [w_L, w_R]}  \frac{|f(\hat \rho, \hat w)|}{\hat \rho^2}\right) \text{TV}_{\rho}(0) + C_* \text{TV}_w(0),
\end{align*}
where the second inequality holds since $\text{TV}_\rho(t)$ and $\text{TV}_w(t)$ only include original waves, in accordance with the Temple structure of the system \eqref{eq:GSOM1}. Moreover, using again the Temple structure, we obtain
\begin{align*}
\text{PV}^O(\bar h) &\le C_2 \left( \text{TV}(\rho(0)) + \text{TV}_w(0) \right)
\end{align*}
for some constant $C_2$. Therefore, $\text{PV}_\text{in}(\bar h)$ is bounded, as is $\text{TV}(\bar h)$. Altogether, it follows from property \ref{rsp2} that, denoting by $\tau \in \text{Int}$ an interaction time, 
\begin{align*}
\text{TV}_Q(t) &\le \text{TV}_Q(0) + \sum_{\tau \in \text{Int}, \, \tau \le t} \Delta \text{TV}_Q(\tau) \\
&\le \text{TV}_Q(0) + \max_{(\rho, w) \in [0, \rho_\text{max}] \times [w_L, w_R]} |Q_\rho (\rho, w )| \text{TV}_\rho(0)  + \max_{(\rho, w) \in [0, \rho_\text{max}] \times [w_L, w_R]} |Q_w (\rho, w)| \text{TV}_w(0) \\
&\quad + C (\text{TV}(\Gamma) + C \text{TV}(\bar h)) \\
&\le \text{TV}_Q(0) + \max_{(\rho, w) \in [0, \rho_\text{max}] \times [w_L, w_R]} |Q_\rho (\rho, w )| \text{TV}_\rho(0) + \max_{(\rho, w) \in [0, \rho_\text{max}] \times [w_L, w_R]} |Q_w (\rho, w)| \text{TV}_w(0) \\
&\quad + C T\text{V}(\bar h),
\end{align*}
where the final inequality follows from \ref{rsp3}. Using the previous estimate for $\text{TV}(\bar h)$ completes the proof by relying on a WFT approximation.

\end{proof}

\paragraph{Acknowledgment} R.B. and M.B. acknowledge financial support by the Italian Ministry of University and Research, PRIN PNRR P2022XJ9SX ``Heterogeneity on the Road - Modeling, Analysis, Control'', PNRR Italia Domani, funded
by the European Union under NextGenerationEU, CUP B53D23027920001.
The endowment of the Lopez Chair supported B.P.'s research and he thanks the Institute for Advanced Study of Princeton
for the hospitality.



\appendix
\section{Appendix: proof of Theorem \ref{thm:propP}}\label{appendice}
The aim of this appendix is to prove Theorem \ref{thm:propP}, therefore we show that $\rsolvG$ satisfies properties \ref{rsp1} -- \ref{rsp4} in the case of two incoming and two outgoing roads at the junction. Let us begin fixing the notation. The priority rule $\rettar$ is defined by the vector $(\puno,\pdue)$ with $\puno+\pdue=1$, while the matrix of distribution is
\begin{equation*}
	A=\begin{pmatrix}
	\at & \aq\\
	\bt & \bq
	\end{pmatrix}
\end{equation*}
with $\at+\aq=1$ and $\bt+\bq=1$.
The conservation of $\rho$ in \eqref{eq:NinMq} implies
\begin{equation}\label{eq:q3q4}
\begin{split}
	\at\qS_{1}+\bt\qS_{2}&=\qS_{3}\\
	\aq\qS_{1}+\bq\qS_{2}&=\qS_{4}.	
\end{split}
\end{equation}
We denote by $\duno=\dem{\rho_{1}}{w_{1}}$, $\ddue=\dem{\rho_{2}}{w_{2}}$, $\stre=\supp{\rhomorto_{3}(\vpiu_{3},\wS_{3})}{\wS_{3}}$ and $\sq=\supp{\rhomorto_{4}(\vpiu_{4},\wS_{4})}{\wS_{4}}$, where $\wS_{3}$ and $\wS_{4}$ are determined by \eqref{eq:wGeneric}, and $\rhomorto_{j}(\vpiu_{j},\wS_{j})$, $j=3,4$, is given in Definition \ref{eq:rhomorto}. The quantities $\duno$, $\ddue$, $\stre$ and $\sq$ define the sets $\omegaInc$ and $\omegaOut$, see \eqref{eq:omegaN} and \eqref{eq:omegaM}.
Finally, we denote by
\begin{equation}\label{eq:rettar22}
	\rettar: \begin{cases}
		\quno=h\puno\\ 
		\qdue = h\pdue.
	\end{cases}
\end{equation}
the priority rule straight line and by
\begin{equation}\label{eq:z3z4}
\begin{split}
	&\rettas_{3}: \at\quno+\bt\qdue=\stre\\
	&\rettas_{4}: \aq\quno+\bq\qdue=\sq,
\end{split}
\end{equation}
the straight lines that maximize the outgoing flux. 

\begin{prop}
$\rsolvG$ satisfies property \ref{rsp1}.
\end{prop}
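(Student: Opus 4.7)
The strategy is to trace the APRSOM algorithm step by step, verifying that every intermediate quantity depends only on data that coincide in $\uV$ and $\uVh$. A preliminary observation is that since $w_i=\hw_i$ for every incoming road, the condition defining a bad incoming datum, $\rho<\sigma(w)$, involves only $w$ via $\sigma$; thus $U_i$ is bad if and only if $\hu_i$ is bad, and in that case the hypothesis forces $\rho_i=\hrho_i$, giving $U_i=\hu_i$. The same dichotomy holds at every outgoing end. I would then check that the demands $d_1,d_2$ agree in both configurations: for good incoming data both equal $\qmax(w_i)$, depending only on $w_i$; for bad incoming data the equality $U_i=\hu_i$ is trivial.

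Next, the supplies $s_j=\supp{\rhomorto(\wS_j;v_j^+)}{\wS_j}$ depend on $\wS_j$, which by \eqref{eq:2in1w3step1} (and, in the iterative case, by \eqref{eq:w3h1}) is a function only of the matrix $A$, the priority vector, the incoming $w_i$'s, and previously fixed demands, all invariant; they also depend on $v_j^+=V(\rho_j,w_j)$, which enters $s_j$ only in the bad outgoing case, where the hypothesis gives the same $v_j^+$ and hence the same $\rhomorto$ and the same $s_j$. In the good outgoing case $s_j=\qmax(\wS_j)$ depends only on the invariant $\wS_j$.

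With demands and supplies matched, every quantity in Step~1 of APRSOM — the thresholds $\huno,\hdue,\htre$ in \eqref{eq:h1}--\eqref{eq:h2} and \eqref{eq:2in1h3}, the minimum $\hbv{1}$ in \eqref{eq:hbv1}, and the branch selected among cases \ref{alg1:casoA}, \ref{alg1:casoB}(a), \ref{alg1:casoB}(b) — is identical for $\uV$ and $\uVh$. If Step $\indice+1$ is reached, one incoming flux is fixed at an invariant $d_{\ind{k}}$, the updated map $\wmenoS_3(h)$ in \eqref{eq:w3h1} and the new $\htre$ in \eqref{eq:2in1maxh3} involve only invariant data, and the same branch is again taken. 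An induction on the iteration index shows that APRSOM returns the same flux vector $(\qS_1,\qS_2,\qS_3,\qS_4)$ in both configurations.

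For the final reconstruction, Definition \ref{def:RSgsom} gives $\wS_i=w_i=\hw_i$ and prescribes $\wS_j$ by \eqref{eq:wGeneric} from invariant data. The densities $\rhoS_i$ and $\rhoS_j$ are the unique solutions of $Q(\cdot,\wS_i)=\qS_i$ in $\neg(U_i)$ (respectively $Q(\cdot,\wS_j)=\qS_j$ in $\pos(U_j)$), uniqueness within each connected piece of the admissible set following from the strict concavity \ref{q2} of $Q(\cdot,w)$. The admissible sets of Propositions \ref{prop:incoming}--\ref{prop:outgoing} coincide for $U_i,\hu_i$ (and $U_j,\hu_j$) by the good/bad dichotomy recalled above. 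The only delicate point, which I expect to be the main piece of bookkeeping, is the disconnected set $\neg(U_i)=\{\rho_i\}\cup(\widetilde\rho_i(w_i),\rho^{\max}(w_i)]$ arising at a bad incoming datum: if $\qS_i=d_i$ the solver selects the stationary branch $\rhoS_i=\rho_i=\hrho_i$, while otherwise $\rhoS_i$ lies on the right-connected piece and is uniquely determined by $\qS_i$ and $w_i$ alone, so both subcases yield $\rhoS_i=\hrho^*_i$. The outgoing case is analogous. Beyond this small branching subtlety, no substantial obstacle is expected; the argument is essentially a careful unwinding of the algorithm's dependence on its inputs.
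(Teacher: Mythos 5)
Your proof is correct and follows essentially the same route as the paper: both arguments reduce to checking that the demands $d_{i}$ and supplies $s_{j}$ (hence the sets $\omegaInc$ and $\omegaOut$) coincide for the two data, after which the algorithm, which depends only on these sets together with the priority vector and the matrix $A$, produces the same fluxes. Your extra verification that the reconstructed states themselves agree (uniqueness of $\rhoS_{i}$ in $\neg(U_{i})$ and of $\rhoS_{j}$ in $\pos(U_{j})$, including the isolated-point branch of $\neg(U_{i})$ for a bad incoming datum) is left implicit in the paper's proof but is a sound and welcome addition.
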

\begin{proof}
Let us consider two states $\uVd$ and $\uVhd$ such that $w_{i}=\hw_{i}$ for $i=1,2$, $w_{j}=\hw_{j}$ for $j=3,4$ and $\rho_{i}=\hrho_{i}$ $(\rho_{j}=\hrho_{j})$ whenever either $U_{i}$ or $\hu_{i}$ $(U_{j}$ or $\hu_{j})$ is a bad datum. This implies that for every bad datum we have 
\begin{align*}
d_{i}&=\mathring d_{i}=Q(\rho_{i},w_{i})\\
s_{j}&=\mathring s_{j}=Q(\rho_{j},w_{j}).
\end{align*}
On the other hand, for any good datum, since $w_{i}=\hw_{i}$ for $i=1,2$ and $w_{j}=\hw_{j}$ for $j=3,4$, we have 
\begin{align*}
d_{i}&=\mathring d_{i}=\qmax(w_{i})\\
s_{j}&=\mathring s_{j}=\qmax(w_{j}).
\end{align*}
Therefore, $\omegaInc=[0,\duno]\times[0,\ddue]=[0,\hduno]\times[0,\hddue]$ and $\omegaOut=[0,\stre]\times[0,\sq]=[0,\hstre]\times[0,\hsq]$. Since the Riemann Solver $\rsolvG$ only depends on the priority rule, the matrix $A$ and the sets $\omegaInc$ and $\omegaOut$, then it holds
\begin{align*}
	\rsolvG\uV=\rsolvG\uVh.
\end{align*}
%
\end{proof}

We now consider properties \ref{rsp2}, \ref{rsp3} and \ref{rsp4}.
For convenience, we work in the $(\quno,\qdue)$-plane. Starting from an equilibrium for $\rsolvG$, we estimate the variation of the flux and of $w$ sending a wave on each one of the roads. Our aim is to show that we can control the variation of $Q$ and $w$. Let us begin with \ref{rsp2} and \ref{rsp3}; starting from a certain equilibrium $(U_{1}, U_{2},U_{3},U_{4})$, we send a wave $\ondarho_{i}$ (or $\ondarho_{j}$), with corresponding flux $\ondaq_{i}$ (or $\ondaq_{j}$), and we compute the solution of $\rsolvG$, $(\uS_{1}, \uS_{2},\uS_{3},\uS_{4})$, with corresponding fluxes $(\qS_{1},\qS_{2},\qS_{3},\qS_{4})$. We are interested in computing (see \eqref{eq:funzionali})
\begin{align}
	\nonumber\Delta\Gamma(\tb)&= (\qS_{1}- q_{1})+(\qS_{2}- q_{2})\\
	\label{eq:tvw}\Delta\tv_{w} & = |\wS_{3}-w_{3}|+|\wS_{4}-w_{4}|.
\end{align}
The variation of the flux is
\begin{equation*}
	\Delta\tv_{Q}(\tb)  = |\qS_{i}-\ondaq_{i}|+|\qS_{\ell}- q_{\ell}|+|\qS_{3}-q_{3}|+|\qS_{4}-q_{4}|-|\ondaq_{i}- q_{i}|
\end{equation*}
if the interacting wave is in the incoming road $I_{i}$, with $\ell=3-i$, and 
\begin{equation*}
	\Delta\tv_{Q}(\tb)  = |\qS_{1}- q_{1}|+|\qS_{2}- q_{2}|+|\qS_{j}-\ondaq_{j}|+|\qS_{k}-q_{k}|-|\ondaq_{j}-q_{j}|
\end{equation*}
if the interacting wave is in the outgoing road $I_{j}$, with $k=7-j$. Note that in \eqref{eq:tvw} we only have variations of $w$ in the outgoing roads since $w$ is a Riemann invariant and thus $\wS_{i}=w_{i}$, $i=1,2$.
For property \ref{rsp4}, in the case of a wave $(\ondarho_{i},\ondaw_{i})$ along an incoming road $I_{i}$, $i\in\{1,2\}$, the variation in $w$ becomes
\begin{equation*}
	\Delta\tv_{w}  = |\wS_{3}-w_{3}|+|\wS_{4}-w_{4}|-|\ondaw_{i}-w_{i}|.
\end{equation*}

The computations related to $\tv_{w}$ showed two possible configurations to obtain the desired estimates. First of all, we observe that by \eqref{eq:wGeneric} we have
\begin{align*}
	\wS_{3} & =w_{2}+\frac{\qS_{1}}{\qS_{3}}\at(w_{1}-w_{2})=w_{1}+\frac{\qS_{2}}{\qS_{3}}\bt(w_{2}-w_{1})\\
	w_{3} & =w_{2}+\frac{ q_{1}}{q_{3}}\at(w_{1}-w_{2})=w_{1}+\frac{ q_{2}}{q_{3}}\at(w_{2}-w_{1}),
\end{align*}
where we choose one of the two formulations for $\wS_{3}$ and $w_{3}$, depending on which one is the more convenient from data. 
Analogously for $\wS_{4}$ and $w_{4}$.
More generally we have 
\begin{equation}\label{eq:wTecn}
	\begin{split}
		\wS_{3} & =w_{\ell}+\frac{\qS_{i}}{\qS_{3}}\aij{3}{i}(w_{i}-w_{\ell}), \qquad 
		w_{3}  =w_{\ell}+\frac{ q_{i}}{q_{3}}\aij{3}{i}(w_{i}-w_{\ell})\\
		\wS_{4} & =w_{\ell}+\frac{\qS_{i}}{\qS_{4}}\aij{4}{i}(w_{i}-w_{\ell}), \qquad 
		w_{4}  =w_{\ell}+\frac{ q_{i}}{q_{4}}\aij{4}{i}(w_{i}-w_{\ell})
	\end{split}
\end{equation}
with $i\in\{1,2\}$ and $\ell=3-i$. In particular, when we send a wave $(\ondarho_{i},\ondaw_{i})$ on road $i$, we have
\begin{align}\label{eq:wTecnT}
	\wS_{3} & =w_{\ell}+\frac{\qS_{i}}{\qS_{3}}\aij{3}{i}(\ondaw_{i}-w_{\ell}), \qquad \wS_{4}  =w_{\ell}+\frac{\qS_{i}}{\qS_{4}}\aij{4}{i}(\ondaw_{i}-w_{\ell}).
\end{align}

\paragraph{Configuration 1.}
The following configuration is obtained when $\qS_{1}= q_{1}$ or $\qS_{2}= q_{2}$. Let $\qS_{i}= q_{i}$, $i=1,2$, and set $\ell=3-i$, then $|\qS_{3}-q_{3}|=\aij{3}{\ell}|\qS_{\ell}- q_{\ell}|$. Sending a wave $\ondarho_{r}$, $r=1,\dots,4$, by \eqref{eq:wTecn} we have
\begin{align}
	|\wS_{3}-w_{3}|&=\frac{\aij{3}{i} q_{i}|w_{1}-w_{2}||q_{3}-\qS_{3}|}{\qS_{3}q_{3}}=\frac{\at\bt q_{i}|w_{1}-w_{2}|}{\qS_{3}q_{3}}|\qS_{\ell}- q_{\ell}|\label{eq:w3T1}\\
	|\wS_{4}-w_{4}|&=\frac{\aij{4}{\ell} q_{i}|w_{1}-w_{2}||q_{4}-\qS_{4}|}{\qS_{4}q_{4}}=\frac{\aq\bq q_{i}|w_{1}-w_{2}|}{\qS_{4}q_{4}}|\qS_{\ell}- q_{\ell}|.\label{eq:w4T1}
\end{align}
Sending a wave $(\ondarho_{i},\ondaw_{i})$, $i\in\{1,2\}$ and $\ell=3-i$, by \eqref{eq:wTecn} and \eqref{eq:wTecnT} we have
\begin{align}
	\nonumber|\wS_{3}-w_{3}|&=\frac{\aij{3}{i}|\qS_{i}q_{3}(\ondaw_{i}-w_{\ell})- q_{i}\qS_{3}(w_{i}-w_{\ell})|}{\qS_{3}q_{3}}\\
	\nonumber&=\frac{\aij{3}{i}|\qS_{i}q_{3}(\ondaw_{i}-w_{i}+w_{i}-w_{\ell})- q_{i}\qS_{3}(w_{i}-w_{\ell})|}{\qS_{3}q_{3}}\\
	\nonumber&\leq \frac{\aij{3}{i}\qS_{i}|\ondaw_{i}-w_{i}|}{\qS_{3}}+\frac{\aij{3}{i}|w_{1}-w_{2}||\qS_{i}q_{3}- q_{i}\qS_{3}|}{\qS_{3}q_{3}}\\
	&=\frac{\aij{3}{i}\qS_{i}|\ondaw_{i}-w_{i}|}{\qS_{3}}+\frac{\at\bt q_{\ell}|w_{1}-w_{2}|}{\qS_{3}q_{3}}|\qS_{i}- q_{i}|\label{eq:wt3T1}\\
	|\wS_{4}-w_{4}|&\leq\frac{\aij{4}{i}\qS_{i}|\ondaw_{i}-w_{i}|}{\qS_{4}}+\frac{\aq\bq q_{\ell}|w_{1}-w_{2}|}{\qS_{4}q_{4}}|\qS_{i}- q_{i}|.\label{eq:wt4T1}
\end{align}

\paragraph{Configuration 2.}
The following configuration is obtained when $\qS_{1}\neq q_{1}$ and $\qS_{2}\neq q_{2}$. Sending a wave $\ondarho_{r}$, $r=1,\dots,4$, by \eqref{eq:wTecn} we have
\begin{align}
	\nonumber|\wS_{3}-w_{3}|&=\frac{\aij{3}{i}|w_{1}-w_{2}||\qS_{i}q_{3}- q_{i}\qS_{3}|}{\qS_{3}q_{3}}\\
	\nonumber&=\frac{\aij{3}{i}|w_{1}-w_{2}||\qS_{i}(\aij{3}{i} q_{i}+\aij{3}{\ell} q_{\ell})- q_{i}(\aij{3}{i}\qS_{i}+\aij{3}{\ell}\qS_{\ell})|}{\qS_{3}q_{3}}\\
	\nonumber&=\frac{\at\bt|w_{1}-w_{2}|}{\qS_{3}q_{3}}|\qS_{1} q_{2}- q_{1}\qS_{2}|\\
	\nonumber&=\frac{\at\bt|w_{1}-w_{2}|}{\qS_{3}q_{3}}|\qS_{1} q_{2}-\qS_{1}\qS_{2}+\qS_{1}\qS_{2}- q_{1}\qS_{2}|\\
	&=\frac{\at\bt|w_{1}-w_{2}|}{\qS_{3}q_{3}}|\qS_{2}(\qS_{1}- q_{1})-\qS_{1}(\qS_{2}- q_{2})|\label{eq:w3T2}\\
	|\wS_{4}-w_{4}|&=\frac{\aq\bq|w_{1}-w_{2}|}{\qS_{4}q_{4}}|\qS_{2}(\qS_{1}- q_{1})-\qS_{1}(\qS_{2}- q_{2})|.\label{eq:w4T2}
\end{align}
Note that, in the case of $\Delta\hh(\tb)\propto(\qS_{1}- q_{1})$, since $\Delta\Gamma(\tb)=(\qS_{1}- q_{1})+(\qS_{2}- q_{2})$, we rewrite \eqref{eq:w3T2} and \eqref{eq:w4T2} as
\begin{align}
	\nonumber|\wS_{3}-w_{3}|&=\frac{\at\bt|w_{1}-w_{2}|}{\qS_{3}q_{3}}|(\qS_{1}+\qS_{2})(\qS_{1}- q_{1})-\qS_{1}\Delta\Gamma(\tb)|\\
	&\leq\frac{\at\bt|w_{1}-w_{2}|}{\qS_{3}q_{3}}|\qS_{1}+\qS_{2}|(|\Delta\Gamma|+|\Delta\hh(\tb)|)\label{eq:w3T3}\\
	|\wS_{4}-w_{4}|&\leq\frac{\aq\bq|w_{1}-w_{2}|}{\qS_{4}q_{4}}|\qS_{1}+\qS_{2}|(|\Delta\Gamma|+|\Delta\hh(\tb)|).\label{eq:w4T3}
\end{align}
In the case of $\Delta\hh(\tb)\propto(\qS_{2}- q_{2})$ we follow similar computations and obtain the same result.

Finally, sending a wave $(\ondarho_{i},\ondaw_{i})$, $i\in\{1,2\}$ and $\ell=3-i$, we have
\begin{align}
	\nonumber|\wS_{3}-w_{3}|&=\frac{\aij{3}{i}|\qS_{i}q_{3}(\ondaw_{i}-w_{\ell})- q_{i}\qS_{3}(w_{i}-w_{\ell})|}{\qS_{3}q_{3}}\\
	\nonumber&=\frac{\aij{3}{i}|\qS_{i}q_{3}(\ondaw_{i}-w_{i}+w_{i}-w_{\ell})- q_{i}\qS_{3}(w_{i}-w_{\ell})|}{\qS_{3}q_{3}}\\
	\nonumber&\leq \frac{\aij{3}{i}\qS_{i}|\ondaw_{i}-w_{i}|}{\qS_{3}}+\frac{\aij{3}{i}|w_{1}-w_{2}||\qS_{i}q_{3}- q_{i}\qS_{3}|}{\qS_{3}q_{3}}\\
	&=\frac{\aij{3}{i}\qS_{i}|\ondaw_{i}-w_{i}|}{\qS_{3}}+\frac{\at\bt|w_{1}-w_{2}|}{\qS_{3}q_{3}}|\qS_{2}(\qS_{1}- q_{1})-\qS_{1}(\qS_{2}- q_{2})|\label{eq:wt3T2}\\
	|\wS_{4}-w_{4}|&\leq\frac{\aij{4}{i}\qS_{i}|\ondaw_{i}-w_{i}|}{\qS_{4}}+\frac{\aq\bq|w_{1}-w_{2}|}{\qS_{4}q_{4}}|\qS_{2}(\qS_{1}- q_{1})-\qS_{1}(\qS_{2}- q_{2})|.\label{eq:wt4T2}
\end{align}
Note that, in the case of $\Delta\hh(\tb)\propto(\qS_{1}- q_{1})$ or $\Delta\hh(\tb)\propto(\qS_{2}- q_{2})$ we rewrite \eqref{eq:wt3T2} and \eqref{eq:wt4T2} as
\begin{align}
	|\wS_{3}-w_{3}|&=\frac{\aij{3}{i}\qS_{i}|\ondaw_{i}-w_{i}|}{\qS_{3}}+\frac{\at\bt|w_{1}-w_{2}|}{\qS_{3}q_{3}}|\qS_{1}+\qS_{2}|(|\Delta\Gamma|+|\Delta\hh(\tb)|)\label{eq:wt3T3}\\
	|\wS_{4}-w_{4}|&\leq\frac{\aij{4}{i}\qS_{i}|\ondaw_{i}-w_{i}|}{\qS_{4}}+\frac{\aq\bq|w_{1}-w_{2}|}{\qS_{4}q_{4}}|\qS_{1}+\qS_{2}|(|\Delta\Gamma|+|\Delta\hh(\tb)|).\label{eq:wt4T3}
\end{align}

\medskip
We divide the proof of \ref{rsp2} -- \ref{rsp4} in three cases, depending on the initial position of the equilibrium.
Since we work in the $(\quno,\qdue)$-plane, we identify the equilibrium  $(U_{1},U_{2},U_{3},U_{4})$ with the corresponding fluxes $( q_{1}, q_{2},q_{3},q_{4})$. 
Therefore, with a slight abuse of notation we will write the equilibrium condition as $\rsolvG( q_{1}, q_{2},q_{3},q_{4})= ( q_{1}, q_{2},q_{3},q_{4})$. Note that this implies that $q_{3}$ and $q_{4}$ satisfy \eqref{eq:q3q4}.
\begin{description}
	\item[Case A:] We start from the equilibrium $( q_{1}, q_{2},q_{3},q_{4})=(\duno,\ddue,\at\duno+\bt\ddue,\aq\duno+\bq\ddue)$.
	\item[Case B:] We start from the equilibrium along one of the straight lines $\quno=\duno$ or $\qdue=\ddue$.
	\item[Case C:] We start from the equilibrium defined by the intersection between the priority rule $\rettar$ in \eqref{eq:rettar22} and one of the straight lines $\rettas_{3}$ or $\rettas_{4}$ in \eqref{eq:z3z4}.
\end{description}

\subsection{Case A} 
This case is verified when the equilibrium is $( q_{1}, q_{2},q_{3},q_{4})=(\duno,\ddue,\at\duno+\bt\ddue,\aq\duno+\bq\ddue)$. Without loss of generality, we assume that the priority rule $\rettar$ first intersects the straight line $\qdue=\ddue$. We study the effects produced by a single wave sent on each road.
\subsubsection{Case A1: Wave on road 1}\label{sec:A1}
Let us start with a wave on road 1.
\begin{enumerate}[label=\roman*)]
\item We assume $\ondaq_{1}> q_{1}$. 
First of all we analyze the effects of a wave related only to the density $\rho$, i.e. we send a certain $\ondarho_{1}$ on road 1 keeping $w_{1}$ fixed. In Figure \subref*{fig:A1magg} we show a possible solution given by the algorithm. Specifically we have
\begin{equation*}
	 q_{1}\leq\qS_{1}\leq\ondaq_{1},\qquad  q_{2}=\qS_{2}, \qquad \qS_{3}=\at\qS_{1}+\bt q_{2}\geqq_{3}, \qquad \qS_{4}=\aq\qS_{1}+\bq q_{2}\geqq_{4}.
\end{equation*}
We refer to the Appendix of \cite{dellemonache2018CMS} for the estimates of $\Delta\Gamma$, $\Delta\hh$ and $\Delta\tv_{Q}$ of \ref{rsp2}. By \eqref{eq:w3T1} and \eqref{eq:w4T1} we have
	\begin{flalign*}
	|\wS_{3}-w_{3}|&\leq\frac{\at\bt q_{2}|w_{2}-w_{1}|}{(q_{3})^{2}}|\ondaq_{1}- q_{1}|,\qquad
	|\wS_{4}-w_{4}|\leq\frac{\aq\bq q_{2}|w_{2}-w_{1}|}{(q_{4})^{2}}|\ondaq_{1}- q_{1}|&&\\
	\Rightarrow\Delta\tv_{w}(\tb)&
	\leq\left(\frac{\at\bt}{(q_{3})^{2}}+\frac{\aq\bq}{(q_{4})^{2}}\right) q_{2}|w_{2}-w_{1}||\ondaq_{1}- q_{1}|.&&
	\end{flalign*}

Next, we analyze the effects of a wave in $\rho$ and $w$, i.e. we send a couple $(\ondarho_{1},\ondaw_{1})$ on road 1 such that we still have $\ondaq_{1}> q_{1}$. The estimates on $\Gamma$, $\hh$ and $\tv_{Q}$ do not change, while for $\tv_{w}$ by \eqref{eq:wt3T1} and \eqref{eq:wt4T1} we have
\begin{flalign*}	
	|\wS_{3}-w_{3}|&\leq \frac{\at\ondaq_{1}}{q_{3}}|\ondaw_{1}-w_{1}|+\frac{\at\bt q_{2}|w_{2}-w_{1}|}{(q_{3})^{2}}|\ondaq_{1}- q_{1}|&&\\
	|\wS_{4}-w_{4}|&\leq \frac{\at\ondaq_{1}}{q_{4}}|\ondaw_{1}-w_{1}|+\frac{\aq\bq q_{2}|w_{2}-w_{1}|}{(q_{4})^{2}}|\ondaq_{1}- q_{1}|&&\\
	\tv_{w}(\tb+)&\leq \left(\frac{\at}{q_{3}}+\frac{\aq}{q_{4}}\right)\ondaq_{1}|\ondaw_{1}-w_{1}|+\left(\frac{\at\bt}{(q_{3})^{2}}+\frac{\aq\bq}{(q_{4})^{2}}\right) q_{2}|w_{2}-w_{1}||\ondaq_{1}- q_{1}|&&\\
	\tv_{w}(\tb-)&=|\ondaw_{1}-w_{1}|&&\\
	\Rightarrow \Delta\tv_{w}(\tb)&\leq\left[\left(\frac{\at}{q_{3}}+\frac{\aq}{q_{4}}\right)\ondaq_{1}-1\right]|\ondaw_{1}-w_{1}|&&\\
	&+\left(\frac{\at\bt}{(q_{3})^{2}}+\frac{\aq\bq}{(q_{4})^{2}}\right) q_{2}|w_{2}-w_{1}||\ondaq_{1}- q_{1}|.&&
\end{flalign*}
Therefore \ref{rsp2} and \ref{rsp4} hold.

\item We assume $\ondaq_{1}< q_{1}$. 
First of all we analyze the effects of a wave related only to the density $\rho$, i.e. we send a certain $\ondarho_{1}$ on road 1 keeping $w_{1}$ fixed. In Figure \subref*{fig:A1min} we show a possible solution given by the algorithm. Specifically we have
\begin{equation*}
	 q_{1}>\qS_{1}=\ondaq_{1},\qquad  q_{2}=\qS_{2}, \qquad \qS_{3}=\at\ondaq_{1}+\bt q_{2}\leqq_{3}, \qquad \qS_{4}=\aq\ondaq_{1}+\bq q_{2}\leqq_{4}.
\end{equation*}
We refer to the Appendix of \cite{dellemonache2018CMS} for the estimates of $\Delta\Gamma$, $\Delta\hh$ and $\Delta\tv_{Q}$ of \ref{rsp2} and \ref{rsp3}. Note that $\qS_{3}\geq\bt q_{2}$ and $\qS_{4}\geq\bq q_{2}$. By \eqref{eq:w3T1} and \eqref{eq:w4T1} we have
	\begin{flalign*}
	|\wS_{3}-w_{3}|&\leq\frac{\at|w_{2}-w_{1}|}{q_{3}}|\ondaq_{1}- q_{1}|,\qquad
	|\wS_{4}-w_{4}|\leq\frac{\aq|w_{2}-w_{1}|}{q_{4}}|\ondaq_{1}- q_{1}|&&\\
	\Rightarrow\Delta\tv_{w}(\tb)&
	\leq\left(\frac{\at}{q_{3}}+\frac{\aq}{q_{4}}\right)|w_{2}-w_{1}||\ondaq_{1}- q_{1}|.&&
	\end{flalign*}

Next, we analyze the effects of a wave in $\rho$ and $w$, i.e. we send a couple $(\ondarho_{1},\ondaw_{1})$ on road 1 such that we still have $\ondaq_{1}< q_{1}$. The estimates on $\Gamma$, $\hh$ and $\tv_{Q}$ do not change, while for $\tv_{w}$ by \eqref{eq:wt3T1} and \eqref{eq:wt4T1} we have
\begin{flalign*}	
	|\wS_{3}-w_{3}|&\leq \frac{\at\ondaq_{1}}{q_{3}}|\ondaw_{1}-w_{1}|+\frac{\at|w_{2}-w_{1}|}{q_{3}}|\ondaq_{1}- q_{1}|&&\\
	|\wS_{4}-w_{4}|&\leq \frac{\at\ondaq_{1}}{q_{4}}|\ondaw_{1}-w_{1}|+\frac{\aq|w_{2}-w_{1}|}{q_{4}}|\ondaq_{1}- q_{1}|&&\\
	\tv_{w}(\tb+)&\leq \left(\frac{\at}{q_{3}}+\frac{\aq}{q_{4}}\right)\ondaq_{1}|\ondaw_{1}-w_{1}|+\left(\frac{\at}{q_{3}}+\frac{\aq}{q_{4}}\right)|w_{2}-w_{1}||\ondaq_{1}- q_{1}|&&\\
	\tv_{w}(\tb-)&=|\ondaw_{1}-w_{1}|&&\\
	\Rightarrow \Delta\tv_{w}(\tb)&\leq\left[\left(\frac{\at}{\bt}+\frac{\aq}{\bq}\right)\frac{\ondaq_{1}}{ q_{2}}-1\right]|\ondaw_{1}-w_{1}|+\left(\frac{\at}{q_{3}}+\frac{\aq}{q_{4}}\right)|w_{2}-w_{1}||\ondaq_{1}- q_{1}|.&&
\end{flalign*}
Therefore \ref{rsp2}, \ref{rsp3} and \ref{rsp4} hold.

\end{enumerate}

\begin{figure}[h!]
\centering
\includegraphics[]{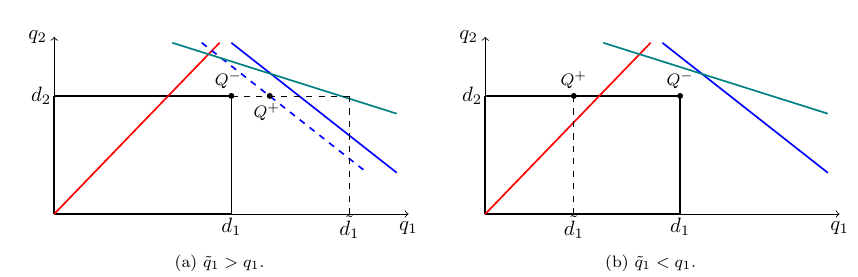}
\caption{Case A1: Wave on road 1.}
\label{fig:A1}
\end{figure}

\subsubsection{Case A2: Wave on road 2}\label{sec:A2}
We now consider a wave on road 2.
\begin{enumerate}[label=\roman*)]
\item We assume $\ondaq_{2}> q_{2}$. 
First of all we analyze the effects of a wave related only to the density $\rho$, i.e. we send a certain $\ondarho_{2}$ on road 2 keeping $w_{2}$ fixed. We have two possibilities: $\ondaddue$ is big enough to find an intersection between a straight line which maximizes the outgoing flux  $\rettas_{3}$ or $\rettas_{4}$ and the priority rule $\rettar$ (see Figure \subref*{fig:A2magg1}), or the solution is along $\qdue=\ondaddue$ (see Figure \subref*{fig:A2magg2}). 

In the first case we have
\begin{equation*}
	 q_{1}>\qS_{1},\qquad  q_{2}\leq\qS_{2}\leq\ondaq_{2}, \qquad \qS_{3}=\at\qS_{1}+\bt\qS_{2}, \qquad \qS_{4}=\aq\qS_{1}+\bq\qS_{2}.
\end{equation*}
Note that, since $\qS_{1}\geq\puno q_{2}/\pdue$, then $\qS_{3}\geq\at\puno q_{2}/\pdue$ and $\qS_{4}\geq\aq\puno q_{2}/\pdue$. 
We refer to the Appendix of \cite{dellemonache2018CMS} for the estimates of $\Delta\Gamma$, $\Delta\hh$ and $\Delta\tv_{Q}$ of \ref{rsp2}. We only observe that
\begin{align*}
	\Delta\hh(\tb) &=\frac{\qS_{2}- q_{2}}{\pdue}\leq\frac{|\ondaq_{2}- q_{2}|}{\pdue}.
\end{align*}
By \eqref{eq:w3T3} and \eqref{eq:w4T3} we have
	\begin{flalign*}
	|\wS_{3}-w_{3}|&\leq\frac{\bt\pdue|w_{1}-w_{2}|(\ondaq_{2}+ q_{1})}{\puno q_{2}q_{3}}(|\Delta\Gamma(\tb)|+|\Delta\hh(\tb)|)&&\\
	|\wS_{4}-w_{4}|&\leq\frac{\bq\pdue|w_{1}-w_{2}|(\ondaq_{2}+ q_{1})}{\puno q_{2}q_{4}}(|\Delta\Gamma(\tb)|+|\Delta\hh(\tb)|)&&\\
	\Rightarrow\Delta\tv_{w}(\tb)&\leq\left(\frac{\bt}{q_{3}}+\frac{\bq}{q_{4}}\right)\frac{\pdue|w_{1}-w_{2}|(\ondaq_{2}+ q_{1})}{\puno q_{2}}(|\Delta\Gamma(\tb)|+|\Delta\hh(\tb)|)
	.&&
\end{flalign*}

Next, we analyze the effects of a wave in $\rho$ and $w$, i.e. we send a couple $(\ondarho_{2},\ondaw_{2})$ on road 2 such that we still have $\ondaq_{2}> q_{2}$. The estimates on $\Gamma$, $\hh$ and $\tv_{Q}$ do not change, while for $\tv_{w}$ by \eqref{eq:wt3T3} and \eqref{eq:wt4T3} we have 
\begin{flalign*}
	|\wS_{3}-w_{3}|&\leq\frac{\bt\pdue\ondaq_{2}|\ondaw_{2}-w_{2}|}{\at\puno q_{1}}
	+\frac{\bt\pdue|w_{1}-w_{2}|(\ondaq_{2}+ q_{1})}{\puno q_{2}q_{3}}(|\Delta\Gamma(\tb)|+|\Delta\hh(\tb)|)&&\\
	|\wS_{4}-w_{3}|&\leq\frac{\bq\pdue\ondaq_{2}|\ondaw_{2}-w_{2}|}{\aq\puno q_{1}}
	+\frac{\bq\pdue|w_{1}-w_{2}|(\ondaq_{2}+ q_{1})}{\puno q_{2}q_{4}}(|\Delta\Gamma(\tb)|+|\Delta\hh(\tb)|)&&\\
	\Rightarrow \Delta\tv_{w}(\tb)&\leq\left(\frac{\bt}{q_{3}}+\frac{\bq}{q_{4}}\right)\frac{\pdue\ondaq_{2}|\ondaw_{2}-w_{2}|}{\puno q_{2}}|\ondaw_{2}-w_{2}|&&\\
	&+\left(\frac{\bt}{q_{3}}+\frac{\bq}{q_{4}}\right)\frac{\pdue|w_{1}-w_{2}|(\ondaq_{2}+ q_{1})}{\puno q_{2}}(|\Delta\Gamma(\tb)|+|\Delta\hh(\tb)|).&&
\end{flalign*}

In the second case we have 
\begin{equation*}
	 q_{1}>\qS_{1},\qquad  q_{2}<\ondaq_{2}=\qS_{2}, \qquad \qS_{3}=\at\qS_{1}+\bt\qS_{2}, \qquad \qS_{4}=\aq\qS_{1}+\bq\qS_{2}.
\end{equation*}
Note that $\qS_{3}\geq\bt\ondaq_{2}$ and $\qS_{4}\geq\bq\ondaq_{2}$. 
We are interested in property \ref{rsp2} thus we compute
\begin{itemize}[label={*},leftmargin=*]
	\Item
	\begin{flalign*}
	\Gamma(\tb+) & = \qS_{1}+\qS_{2}=\qS_{1}+\ondaq_{2}, \qquad \Gamma(\tb-) =  q_{1}+ q_{2}&&\\
	\Rightarrow \Delta\Gamma(\tb) & = (\qS_{1}- q_{1})+(\ondaq_{2}- q_{2}).&&
	\end{flalign*}
	\Item
	\begin{flalign*}
	\hh(\tb+)&=\frac{\ondaq_{2}}{\pdue},\qquad \hh(\tb-)=\frac{ q_{2}}{\pdue}&&\\
	\Rightarrow \Delta \hh(\tb)&=\frac{\ondaq_{2}- q_{2}}{\pdue}\leq\frac{|\ondaq_{2}- q_{2}|}{\pdue}.&&
	\end{flalign*}
	\Item
	\begin{flalign*}
	\tv_{Q}(\tb+) &= |\qS_{1}- q_{1}|+|\at\qS_{1}+\bt\ondaq_{2}-\at q_{1}-\bt q_{2}|&&\\
	&+|\aq\qS_{1}+\bq\ondaq_{2}-\aq q_{1}-\bq q_{2}| 
	\leq 2|\qS_{1}- q_{1}|+|\ondaq_{2}- q_{2}|&&\\
	\tv_{Q}(\tb-) &= |\ondaq_{2}- q_{2}|&&\\
	\Rightarrow\Delta\tv_{Q}(\tb) &\leq 2|\qS_{1}- q_{1}| \leq 2(|\Delta\Gamma(\tb)|+|\Delta\hh(\tb)|).&&
	\end{flalign*}
	\item By \eqref{eq:w3T3} and \eqref{eq:w4T3} we have
	\begin{flalign*}
	|\wS_{3}-w_{3}|&\leq\frac{\at|w_{1}-w_{2}|(\ondaq_{2}+ q_{1})}{\ondaq_{2}q_{3}}(|\Delta\Gamma(\tb)|+|\Delta\hh(\tb)|)&&\\
	|\wS_{4}-w_{4}|&\leq\frac{\aq|w_{1}-w_{2}|(\ondaq_{2}+ q_{1})}{\ondaq_{2}q_{4}}(|\Delta\Gamma(\tb)|+|\Delta\hh(\tb)|)&&\\
	\Rightarrow\Delta\tv_{w}(\tb)&\leq\left(\frac{\at}{q_{3}}+\frac{\aq}{q_{4}}\right)\frac{|w_{1}-w_{2}|(\ondaq_{2}+ q_{1})}{\ondaq_{2}}(|\Delta\Gamma(\tb)|+|\Delta\hh(\tb)|)
	.&&
	\end{flalign*}
\end{itemize}

Next, we analyze the effects of a wave in $\rho$ and $w$, i.e. we send a couple $(\ondarho_{2},\ondaw_{2})$ on road 2 such that we still have $\ondaq_{2}> q_{2}$. The estimates on $\Gamma$, $\hh$ and $\tv_{Q}$ do not change, while for $\tv_{w}$ by \eqref{eq:wt3T3} and \eqref{eq:wt4T3} we have 
\begin{flalign*}
	|\wS_{3}-w_{3}|&\leq|\ondaw_{2}-w_{2}|+\frac{\at|w_{1}-w_{2}|(\ondaq_{2}+ q_{1})}{\ondaq_{2}q_{3}}(|\Delta\Gamma(\tb)|+|\Delta\hh(\tb)|)&&\\
	|\wS_{4}-w_{3}|&\leq|\ondaw_{2}-w_{2}|+\frac{\aq|w_{1}-w_{2}|(\ondaq_{2}+ q_{1})}{\ondaq_{2}q_{4}}(|\Delta\Gamma(\tb)|+|\Delta\hh(\tb)|)&&\\
	\tv_{w}(\tb-)&=|\ondaw_{2}-w_{2}|&&\\
	\Rightarrow \Delta\tv_{w}(\tb)&\leq|\ondaw_{2}-w_{2}|+\left(\frac{\at}{q_{3}}+\frac{\aq}{q_{4}}\right)\frac{|w_{1}-w_{2}|(\ondaq_{2}+ q_{1})}{\ondaq_{2}}(|\Delta\Gamma(\tb)|+|\Delta\hh(\tb)|).&&
\end{flalign*}
Therefore \ref{rsp2} and \ref{rsp4} hold.

\begin{figure}[h!]
\centering
\includegraphics[]{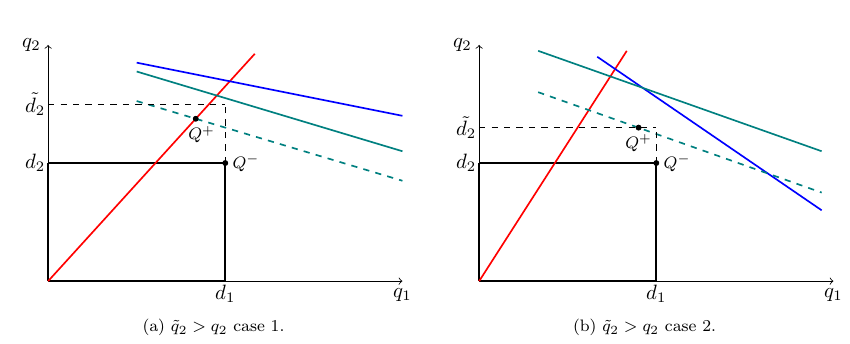}
\caption{Case A2: Wave on road 2 with $\ondaq_{2}> q_{2}$.}
\label{fig:A2}
\end{figure}

\item We assume $\ondaq_{2}< q_{2}$. 
First of all we analyze the effects of a wave related only to the density $\rho$, i.e. we send a certain $\ondarho_{2}$ on road 2 keeping $w_{2}$ fixed. In Figure \ref{fig:A2min} we show a possible solution given by the algorithm. Specifically we have
\begin{equation*}
	 q_{1}=\qS_{1},\qquad  q_{2}>\qS_{2}=\ondaq_{2}, \qquad \qS_{3}=\at q_{1}+\bt\ondaq_{2}\leqq_{3}, \qquad \qS_{4}=\aq q_{1}+\bq\ondaq_{2}\leqq_{4}.
\end{equation*}
We refer to the Appendix of \cite{dellemonache2018CMS} for the estimates of $\Delta\Gamma$, $\Delta\hh$ and $\Delta\tv_{Q}$ of \ref{rsp2} and \ref{rsp3}. Note that $\qS_{3}\geq\at q_{1}$ and $\qS_{4}\geq\aq q_{1}$. By \eqref{eq:w3T1} and \eqref{eq:w4T1} we have
	\begin{flalign*}
	|\wS_{3}-w_{3}|&\leq\frac{\bt|w_{1}-w_{2}|}{q_{3}}|\ondaq_{2}- q_{2}|,\qquad
	|\wS_{4}-w_{4}|\leq\frac{\bq|w_{1}-w_{2}|}{q_{4}}|\ondaq_{2}- q_{2}|&&\\
	\Rightarrow\Delta\tv_{w}(\tb)&=\left(\frac{\bt}{q_{3}}+\frac{\bq}{q_{4}}\right)|w_{1}-w_{2}||\ondaq_{2}- q_{1}|.
	&&
	\end{flalign*}

Next, we analyze the effects of a wave in $\rho$ and $w$, i.e. we send a couple $(\ondarho_{2},\ondaw_{2})$ on road 2 such that we still have $\ondaq_{2}< q_{2}$. The estimates on $\Gamma$, $\hh$ and $\tv_{Q}$ do not change, while for $\tv_{w}$ by \eqref{eq:wt3T1} and \eqref{eq:wt4T1} we have
\begin{flalign*}
	|\wS_{3}-w_{3}|&\leq \frac{\bt\ondaq_{2}}{\at q_{1}}|\ondaw_{2}-w_{2}|+\frac{\bt|w_{2}-w_{1}|}{q_{3}}|\ondaq_{2}- q_{2}|&&\\
	|\wS_{4}-w_{4}|&\leq \frac{\bq\ondaq_{2}}{\at q_{1}}|\ondaw_{2}-w_{2}|+\frac{\bt|w_{2}-w_{1}|}{q_{4}}|\ondaq_{2}- q_{2}|&&\\
	\tv_{w}(\tb+)&\leq \left(\frac{\bt}{\at}+\frac{\bq}{\aq}\right)\frac{\ondaq_{2}}{ q_{1}}|\ondaw_{2}-w_{2}|+\left(\frac{\bt}{q_{3}}+\frac{\bq}{q_{4}}\right)|w_{2}-w_{1}||\ondaq_{2}- q_{2}|
	&&\\
	\tv_{w}(\tb-)&=|\ondaw_{2}-w_{2}|&&\\
	\Rightarrow \Delta\tv_{w}(\tb)&\leq\left[\left(\frac{\bt}{\at}+\frac{\bq}{\aq}\right)\frac{\ondaq_{2}}{ q_{1}}-1\right]|\ondaw_{2}-w_{2}|+\left(\frac{\bt}{q_{3}}+\frac{\bq}{q_{4}}\right)|w_{2}-w_{1}||\ondaq_{2}- q_{2}|.&&
\end{flalign*}
Therefore \ref{rsp2}, \ref{rsp3} and \ref{rsp4} hold.

\end{enumerate}

\begin{figure}[h!]
\centering
\includegraphics[]{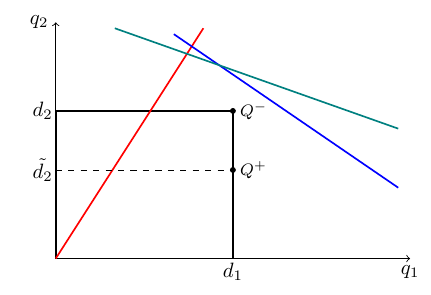}
\caption{Case A2: Wave on road 2 with $\ondaq_{2}< q_{2}$.}
\label{fig:A2min}
\end{figure}

\subsubsection{Case A3: Wave on road 3}\label{sec:A3}
We now consider a wave on road 3. The case of a wave on road 4 is analogous. Note that in this case we are not interested in what happens sending a wave in $\rho$ and $w$, since the changes in $w$ on the outgoing roads do not affect the Riemann Solver. Hence, we only study waves which involve the density $\rho$ and keep $w_{3}$ fixed.

\begin{enumerate}[label=\roman*)]
\item We assume $\ondaq_{3}>q_{3}$, which implies that the straight line which maximizes the outgoing flux $\rettas_{3}$ move to the top. Therefore, in this case the solution of the algorithm is again the equilibrium $U$, thus nothing happens.

\item We assume $\ondaq_{3}< q_{3}$. 
We send a certain $\ondarho_{3}$ on road 3 keeping $w_{3}$ fixed. In this case the straight line which maximizes the outgoing flux $\rettas_{3}$ moves to the bottom, thus we have two possibilities: the new maximisation straight line $\rettas_{3}$ intersects the priority rule $\rettar$ (see Figure \subref*{fig:A3min1}) or the solution is such that $\qS_{2}= q_{2}$ and $ q_{2}\puno/\pdue\leq\qS_{1}\leq q_{1}$ (see Figure \subref*{fig:A3min2}).

In the first case we have
\begin{equation*}
	 q_{1}\geq\qS_{1},\qquad  q_{2}\geq\qS_{2}, \qquad \qS_{3}=\ondaq_{3}=\at\qS_{1}+\bt\qS_{2}\leq q_{3}, \qquad \qS_{4}=\aq\qS_{1}+\bq\qS_{2}\leq q_{4}.
\end{equation*}
We refer to the Appendix of \cite{dellemonache2018CMS} for the estimates of $\Delta\Gamma$, $\Delta\hh$ and $\Delta\tv_{Q}$ of \ref{rsp2} and \ref{rsp3}. Note that
\begin{align*}
	\qS_{1}&=\frac{\puno\ondaq_{3}}{\at\puno+\bt\pdue},\qquad
	\qS_{2} = \frac{\pdue\ondaq_{3}}{\at\puno+\bt\pdue},\qquad
	\qS_{4} = \frac{\aq\puno+\bq\pdue}{\at\puno+\bt\pdue}
	\ondaq_{3}\\
	 q_{1}&=\frac{\puno\qstar_{3}}{\at\puno+\bt\pdue},\qquad
	 q_{2} = \frac{\pdue\qstar_{3}}{\at\puno+\bt\pdue},
\end{align*}
with $\qstar_{3}\leq q_{3}$ such that $\rettas_{3}$ goes through the point $Q^{*}$.
By \eqref{eq:w3T2} and \eqref{eq:w4T2} we have
\begin{flalign*}
	|\wS_{3}-w_{3}|&\leq\frac{\at\bt|w_{1}-w_{2}|}{\ondaq_{3} q_{3}}|\qS_{2}(\qS_{1}- q_{1})-\qS_{1}(\qS_{2}- q_{2})|&&\\
	&\leq\frac{\at\bt|w_{1}-w_{2}|}{(\at\puno+\bt\pdue) q_{3}}|\ondaq_{3}- q_{3}|	&&\\
	|\wS_{4}-w_{4}|&\leq\frac{\aq\bq|w_{1}-w_{2}|}{(\aq\puno+\bq\pdue) q_{4}}|\ondaq_{3}- q_{3}|	&&\\
	\Rightarrow\Delta\tv_{w}(\tb)&\leq\left(\frac{\at\bt}{(\at\puno+\bt\pdue) q_{3}}+\frac{\aq\bq}{(\aq\puno+\bq\pdue) q_{4}}\right)|w_{1}-w_{2}||\ondaq_{3}- q_{3}|	.&&
\end{flalign*}

In the second case we have
\begin{equation*}
	 q_{1}\geq\qS_{1},\qquad  q_{2}=\qS_{2}, \qquad \qS_{3}=\at\qS_{1}+\bt q_{2}\leq  q_{3}, \qquad \qS_{4}=\aq\qS_{1}+\bq q_{2}\leq q_{4}.
\end{equation*}
Note that $\qS_{3}\geq\bt q_{2}$, $\qS_{4}\geq\bq q_{2}$ and that
\begin{equation*}
	\qS_{1}=\frac{\qS_{3}-\bt q_{2}}{\at},\qquad
	 q_{1}=\frac{ q_{3}-\bt q_{2}}{\at}.
\end{equation*}
We compute
\begin{itemize}[label={*},leftmargin=*]
	\Item
	\begin{flalign*}
	\Gamma(\tb+) & = \qS_{1}+\qS_{2}=\qS_{1}+ q_{2}, \qquad \Gamma(\tb-) =  q_{1}+ q_{2}&&\\
	\Rightarrow \Delta\Gamma(\tb) & = \qS_{1}- q_{1}=\frac{\qS_{3}- q_{3}}{\at}<0.&&
	\end{flalign*}
	\Item
	\begin{flalign*}
	\hh(\tb+)&=\hh(\tb-)=\frac{ q_{2}}{\pdue}&&\\
	\Rightarrow \Delta \hh(\tb)&=0.&&
	\end{flalign*}
	\Item
	\begin{flalign*}
	\tv_{Q}(\tb+) &= |\qS_{1}- q_{1}|+|\qS_{3}-\ondaq_{3}|+\aq|\qS_{1}- q_{1}| =\frac{1+\aq}{\at}|\qS_{3}- q_{3}|+|\qS_{3}-\ondaq_{3}|&&\\
	\tv_{Q}(\tb-) &= |\ondaq_{3}- q_{3}|&&\\
	\Rightarrow\Delta\tv_{Q}(\tb) &\leq \frac{1+\at+\aq}{\at}|\qS_{3}- q_{3}|=2|\Delta\Gamma(\tb)|.
	\end{flalign*}
	\item By \eqref{eq:w3T1} and \eqref{eq:w4T1} we have
	\begin{flalign*}
	|\wS_{3}-w_{3}|&\leq\frac{\at\bt q_{2}|w_{1}-w_{2}|}{\qS_{3} q_{3}}|\qS_{1}- q_{1}|	
	\leq \frac{\at|w_{1}-w_{2}|}{ q_{3}}|\Delta\Gamma(\tb)|&&\\
	|\wS_{4}-w_{4}|&\leq\frac{\aq|w_{1}-w_{2}|}{ q_{4}}|\Delta\Gamma(\tb)|	&&\\
	\Rightarrow\Delta\tv_{w}(\tb)&\leq\left(\frac{\at}{ q_{3}}+\frac{\aq}{ q_{4}}\right)|w_{1}-w_{2}||\Delta\Gamma(\tb)|.&&
	\end{flalign*}
\end{itemize}
Therefore \ref{rsp2} and \ref{rsp3} hold.
\end{enumerate}

\begin{figure}[h!]
\centering
\includegraphics[]{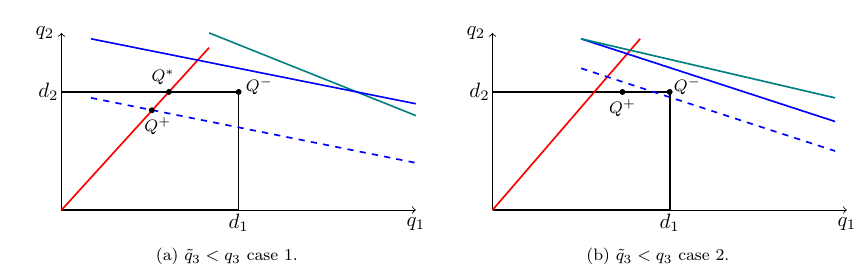}
\caption{Case A3: Wave on road 3 with $\ondaq_{3}< q_{3}$.}
\label{fig:A3}
\end{figure}

\subsection{Case B} 
This case is verified when the equilibrium is along one of the straight lines $\quno=\duno$ or $\qdue=\ddue$. Without loss of generality, we assume that the priority rule $\rettar$ first intersects the straight line $\quno=\duno$, thus the equilibrium is along the right side of the rectangle $\omegaInc$. We study the effects produced by a single wave sent on each road.
\subsubsection{Case B1: Wave on road 1}\label{sec:B1}
Let us start with a wave on road 1.
\begin{enumerate}[label=\roman*)]
\item We assume $\ondaq_{1}> q_{1}$. 
First of all we analyze the effects of a wave related only to the density $\rho$, i.e. we send a certain $\ondarho_{1}$ on road 1 keeping $w_{1}$ fixed. We have two possibilities: $\ondaduno$ is big enough to find an intersection between a straight line which maximizes the outgoing flux $\rettas_{3}$ or $\rettas_{4}$ and the priority rule $\rettar$ (see Figure \subref*{fig:B1magg1}), or the solution is along $\quno=\ondaduno$ (see Figure \subref*{fig:B1magg2}). 

In the first case we have
\begin{equation*}
	 q_{1}\leq\qS_{1}\leq\ondaq_{1},\qquad  q_{2}\geq\qS_{2}, \qquad \qS_{3}=\at\qS_{1}+\bt\qS_{2}, \qquad \qS_{4}=\aq\qS_{1}+\bq\qS_{2}= q_{4}.
\end{equation*}
We refer to the Appendix of \cite{dellemonache2018CMS} for the estimates of $\Delta\Gamma$, $\Delta\hh$ and $\Delta\tv_{Q}$ of \ref{rsp2}. We only observe that
\begin{align*}
	\Delta\hh(\tb) &=\frac{\qS_{1}- q_{1}}{\puno}\leq\frac{|\ondaq_{1}- q_{1}|}{\puno}.
\end{align*}
Moreover, since $\qS_{2}\geq\pdue q_{1}/\puno$, then $\qS_{3}\geq\bt\pdue q_{1}/\puno$ and $\qS_{4}\geq\bq\pdue q_{1}/\puno$. By \eqref{eq:w3T3} and \eqref{eq:w4T3} we have
\begin{flalign*}
	|\wS_{3}-w_{3}|&\leq\frac{\at\puno|w_{1}-w_{2}|(\ondaq_{1}+ q_{2})}{\pdue q_{1} q_{3}}(|\Delta\Gamma(\tb)|+|\Delta\hh(\tb)|)&&\\
	|\wS_{4}-w_{4}|&\leq\frac{\aq\puno|w_{1}-w_{2}|(\ondaq_{1}+ q_{2})}{\pdue q_{1} q_{4}}(|\Delta\Gamma(\tb)|+|\Delta\hh(\tb)|)&&\\
	\Rightarrow\Delta\tv_{w}(\tb)&\leq\left(\frac{\at}{ q_{3}}+\frac{\aq}{ q_{4}}\right)\frac{\puno|w_{1}-w_{2}|(\ondaq_{1}+ q_{2})}{\pdue q_{1}}(|\Delta\Gamma(\tb)|+|\Delta\hh(\tb)|)
	.&&
\end{flalign*}

Next, we analyze the effects of a wave in $\rho$ and $w$, i.e. we send a couple $(\ondarho_{1},\ondaw_{1})$ on road 1 such that we still have $\ondaq_{1}> q_{1}$. The estimates on $\Gamma$, $\hh$ and $\tv_{Q}$ do not change, while for $\tv_{w}$ by \eqref{eq:wt3T3} and \eqref{eq:wt4T3} we have 
\begin{flalign*}
	|\wS_{3}-w_{3}|&\leq\frac{\at\puno\ondaq_{1}|\ondaw_{1}-w_{1}|}{\bt\pdue q_{1}}
	+\frac{\at\puno|w_{1}-w_{2}|(\ondaq_{1}+ q_{2})}{\pdue q_{1} q_{3}}(|\Delta\Gamma(\tb)|+|\Delta\hh(\tb)|)&&\\
	|\wS_{4}-w_{3}|&\leq\frac{\aq\puno\ondaq_{1}|\ondaw_{1}-w_{1}|}{\bq\pdue q_{1}}
	+\frac{\aq\puno|w_{1}-w_{2}|(\ondaq_{1}+ q_{2})}{\pdue q_{1} q_{4}}(|\Delta\Gamma(\tb)|+|\Delta\hh(\tb)|)&&\\
	\Rightarrow \Delta\tv_{w}(\tb)&\leq\left(\frac{\at}{ q_{3}}+\frac{\aq}{ q_{4}}\right)\frac{\puno\ondaq_{1}}{\puno q_{1}}|\ondaw_{1}-w_{1}|&&\\
	&+\left(\frac{\at}{ q_{3}}+\frac{\aq}{ q_{4}}\right)\frac{\puno|w_{1}-w_{2}|(\ondaq_{1}+ q_{2})}{\pdue q_{1}}(|\Delta\Gamma(\tb)|+|\Delta\hh(\tb)|).&&
\end{flalign*}

In the second case we have
\begin{equation*}
	 q_{1}\leq\qS_{1}=\ondaq_{1},\qquad \qS_{3}=\at\qS_{1}+\bt\qS_{2}, \qquad \qS_{4}=\aq\qS_{1}+\bq\qS_{2}= q_{4},
\end{equation*}
with $\qS_{2}$ that can be both greater or lower than $ q_{2}$.
Note that, since $\qS_{2}\geq\pdue\ondaq_{1}/\puno$, then $\qS_{3}\geq\bt\pdue\ondaq_{1}/\puno$ and $\qS_{4}\geq\bq\pdue\ondaq_{1}/\puno$. We compute
\begin{itemize}[label={*},leftmargin=*]
	\Item
	\begin{flalign*}
	\Gamma(\tb+) & = \qS_{1}+\qS_{2}=\ondaq_{1}+\qS_{2}, \qquad \Gamma(\tb-) =  q_{1}+ q_{2}&&\\
	\Rightarrow \Delta\Gamma(\tb) & = (\ondaq_{1}- q_{1})+(\qS_{2}- q_{2}). &&
	\end{flalign*}
	\Item
	\begin{flalign*}
	\hh(\tb+)&=\frac{\ondaq_{1}}{\puno},\qquad \hh(\tb-)=\frac{ q_{1}}{\puno}&&\\
	\Rightarrow \Delta \hh(\tb)&=\frac{\ondaq_{1}- q_{1}}{\puno}\leq\frac{|\ondaq_{1}- q_{1}|}{\puno}.&&
	\end{flalign*}
	\Item
	\begin{flalign*}
	\tv_{Q}(\tb+) &= |\qS_{2}- q_{2}|+|\at\ondaq_{1}+\bt\qS_{2}-\at q_{1}-\bt q_{2}|&&\\
	&+|\aq\ondaq_{1}+\bq\qS_{2}-\aq q_{1}-\bq q_{2}| 
	\leq |\ondaq_{1}- q_{1}|+2|\qS_{2}- q_{2}|&&\\
	\tv_{Q}(\tb-) &= |\ondaq_{1}- q_{1}|&&\\
	\Rightarrow\Delta\tv_{Q}(\tb) &\leq 2|\qS_{2}- q_{2}| \leq 2(|\Delta\Gamma(\tb)|+|\Delta\hh(\tb)|).&&
	\end{flalign*}
	\item By \eqref{eq:w3T3} and \eqref{eq:w4T3} we have
	\begin{flalign*}
	|\wS_{3}-w_{3}|&\leq\frac{\at\puno|w_{1}-w_{2}|(\ondaq_{1}+\ddue)}{\pdue\ondaq_{1} q_{3}}(|\Delta\Gamma(\tb)|+|\Delta\hh(\tb)|)&&\\
	|\wS_{4}-w_{4}|&\leq\frac{\aq\puno|w_{1}-w_{2}|(\ondaq_{1}+\ddue)}{\pdue\ondaq_{1} q_{4}}(|\Delta\Gamma(\tb)|+|\Delta\hh(\tb)|)&&\\
	\Rightarrow\Delta\tv_{w}(\tb)&\leq\left(\frac{\at}{ q_{3}}+\frac{\aq}{ q_{4}}\right)\frac{\puno|w_{1}-w_{2}|(\ondaq_{1}+\ddue)}{\pdue\ondaq_{1}}(|\Delta\Gamma(\tb)|+|\Delta\hh(\tb)|)
	.&&
	\end{flalign*}
\end{itemize}

Next, we analyze the effects of a wave in $\rho$ and $w$, i.e. we send a couple $(\ondarho_{1},\ondaw_{1})$ on road 1 such that we still have $\ondaq_{1}> q_{1}$. The estimates on $\Gamma$, $\hh$ and $\tv_{Q}$ do not change, while for $\tv_{w}$ by \eqref{eq:wt3T3} and \eqref{eq:wt4T3} we have 
\begin{flalign*}
	|\wS_{3}-w_{3}|&\leq\frac{\at\puno|\ondaw_{1}-w_{1}|}{\bt\pdue}
	+\frac{\at\puno|w_{1}-w_{2}|(\ondaq_{1}+\ddue)}{\pdue\ondaq_{1} q_{3}}(|\Delta\Gamma(\tb)|+|\Delta\hh(\tb)|)&&\\
	|\wS_{4}-w_{3}|&\leq\frac{\aq\puno|\ondaw_{1}-w_{1}|}{\bq\pdue}
	+\frac{\aq\puno|w_{1}-w_{2}|(\ondaq_{1}+\ddue)}{\pdue\ondaq_{1} q_{4}}(|\Delta\Gamma(\tb)|+|\Delta\hh(\tb)|)&&\\
	\Rightarrow \Delta\tv_{w}(\tb)&\leq\left(\frac{\at}{\bt q_{3}}+\frac{\aq}{\bq q_{4}}\right)\frac{\puno}{\puno}|\ondaw_{1}-w_{1}|&&\\
	&+\left(\frac{\at}{ q_{3}}+\frac{\aq}{ q_{4}}\right)\frac{\puno|w_{1}-w_{2}|(\ondaq_{1}+\ddue)}{\pdue\ondaq_{1}}(|\Delta\Gamma(\tb)|+|\Delta\hh(\tb)|).&&
\end{flalign*}
Therefore \ref{rsp2} and \ref{rsp4} hold.

\begin{figure}[h!]
\centering
\includegraphics[]{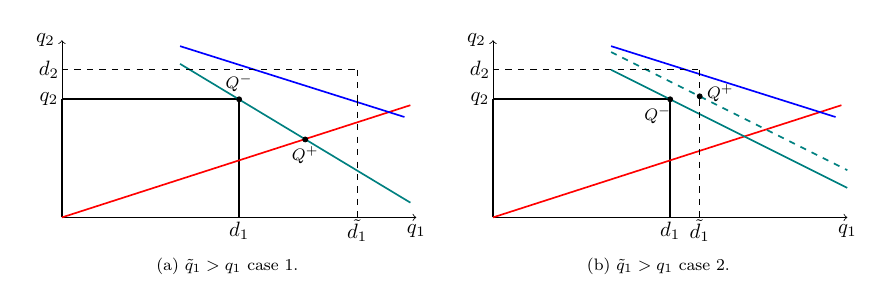}
\caption{Case B1: Wave on road 1 with $\ondaq_{1}> q_{1}$.}
\label{fig:B1}
\end{figure}

\item We assume $\ondaq_{1}< q_{1}$. 
First of all we analyze the effects of a wave related only to the density $\rho$, i.e. we send a certain $\ondarho_{1}$ on road 1 keeping $w_{1}$ fixed. In Figure \ref{fig:B1} we show a possible solution given by the algorithm. Specifically we have
\begin{equation*}
	 q_{1}>\qS_{1}=\ondaq_{1},\qquad \qS_{3}=\at\ondaq_{1}+\bt\qS_{2}, \qquad \qS_{4}=\aq\ondaq_{1}+\bq q_{2}.
\end{equation*}
Note that $\qS_{3}\geq\at\ondaq_{1}$ and $\qS_{4}\geq\aq\ondaq_{1}$. Moreover, we observe that if $\qS_{2}< q_{2}$ then $\qS_{2}- q_{2}<0$, otherwise we define the angle $\beta=\arctan(|\ddue- q_{2}|/|\ondaq_{1}- q_{1}|)$ between the segments $|Q^{-}Q^{*}|$ and $|\widehat QQ^{-}|$ and we obtain $\qS_{2}- q_{2}=|\qS_{2}- q_{2}|\leq \tan\beta|\ondaq_{1}- q_{1}|$.
We compute
\begin{itemize}[label={*},leftmargin=*]
	\Item
	\begin{flalign*}
	\Gamma(\tb+) & = \qS_{1}+\qS_{2}=\ondaq_{1}+\qS_{2}, \qquad \Gamma(\tb-) =  q_{1}+ q_{2}&&\\
	\Rightarrow \Delta\Gamma(\tb) & = (\ondaq_{1}- q_{1})+(\qS_{2}- q_{2})\leq C|\ondaq_{1}- q_{1}|.
	\end{flalign*}
	\Item
	\begin{flalign*}
	\hh(\tb+)&=\frac{\ondaq_{1}}{\puno}, \qquad \hh(\tb-) = \frac{ q_{1}}{\puno}&&\\
	\Rightarrow \Delta \hh(\tb)&=\frac{\ondaq_{1}- q_{1}}{\puno}<0.&&
	\end{flalign*}
	\Item
	\begin{flalign*}
	\tv_{Q}(\tb+) &\leq 2|\qS_{2}- q_{2}|+|\ondaq_{1}- q_{1}|\leq 2(|\Delta\Gamma(\tb)|+|\ondaq_{1}- q_{1}|)&&\\
	\tv_{Q}(\tb-) &= |\ondaq_{1}- q_{1}|&&\\
	\Rightarrow\Delta\tv_{Q}(\tb) &\leq 2(|\Delta\Gamma(\tb)|+|\Delta\hh(\tb)|).  &&
	\end{flalign*}
	\item By \eqref{eq:w3T3} and \eqref{eq:w4T3} we have
	\begin{flalign*}
	|\wS_{3}-w_{3}|&\leq\frac{\bt|w_{1}-w_{2}|}{\ondaq_{1} q_{3}}(\ondaq_{1}+\ddue)(|\Delta\Gamma(\tb)|+|\Delta\hh(\tb)|)&&\\
	|\wS_{4}-w_{4}|&\leq\frac{\bq|w_{1}-w_{2}|}{\ondaq_{1} q_{4}}(\ondaq_{1}+\ddue)(|\Delta\Gamma(\tb)|+|\Delta\hh(\tb)|)&&\\
	\Rightarrow\Delta\tv_{w}(\tb)&\leq\left(\frac{\bt}{ q_{3}}+\frac{\bq}{ q_{4}}\right)\frac{|w_{1}-w_{2}|}{\ondaq_{1}}(\ondaq_{1}+\ddue)(|\Delta\Gamma(\tb)|+|\Delta\hh(\tb)|).&&
	\end{flalign*}
\end{itemize}

Next, we analyze the effects of a wave in $\rho$ and $w$, i.e. we send a couple $(\ondarho_{1},\ondaw_{1})$ on road 1 such that we still have $\ondaq_{1}< q_{1}$. The estimates on $\Gamma$, $\hh$ and $\tv_{Q}$ do not change, while for $\tv_{w}$ by \eqref{eq:wt3T3} and \eqref{eq:wt4T3} we have
\begin{flalign*}
	|\wS_{3}-w_{3}|&\leq|\ondaw_{1}-w_{1}|+\frac{\bt|w_{1}-w_{2}|}{\ondaq_{1} q_{3}}(\ondaq_{1}+\ddue)(|\Delta\Gamma(\tb)|+|\Delta\hh(\tb)|)&&\\
	|\wS_{4}-w_{4}|&\leq|\ondaw_{1}-w_{1}|+\frac{\bq|w_{1}-w_{2}|}{\ondaq_{1} q_{4}}(\ondaq_{1}+\ddue)(|\Delta\Gamma(\tb)|+|\Delta\hh(\tb)|)&&\\
	\tv_{w}(\tb+)&\leq 2|\ondaw_{1}-w_{1}|+\left(\frac{\bt}{ q_{3}}+\frac{\bq}{ q_{4}}\right)\frac{|w_{1}-w_{2}|}{\ondaq_{1}}(\ondaq_{1}+\ddue)(|\Delta\Gamma(\tb)|+|\Delta\hh(\tb)|)
	&&\\
	\tv_{w}(\tb-)&=|\ondaw_{1}-w_{1}|&&\\
	\Rightarrow \Delta\tv_{w}(\tb)&\leq|\ondaw_{1}-w_{1}|+\left(\frac{\bt}{ q_{3}}+\frac{\bq}{ q_{4}}\right)\frac{|w_{1}-w_{2}|}{\ondaq_{1}}(\ondaq_{1}+\ddue)(|\Delta\Gamma(\tb)|+|\Delta\hh(\tb)|).&&
\end{flalign*}
Therefore \ref{rsp2}, \ref{rsp3} and \ref{rsp4} hold.

\end{enumerate}

\begin{figure}[h!]
\centering
\includegraphics[]{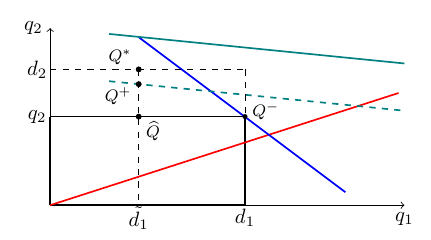}
\caption{Case B1: Wave on road 1 with $\ondaq_{1}< q_{1}$.}
\label{fig:B1-bis}
\end{figure}

\subsubsection{Case B2: Wave on road 2}\label{sec:B2}
Let us consider a wave on road 2.
\begin{enumerate}[label=\roman*)]
\item We assume $\ondaq_{2}> q_{2}$. In this case the equilibrium $Q^{-}$ coincides with the solution $Q^{+}$, see Figure \subref*{fig:B2magg}, thus nothing happens.

\item We assume $\ondaq_{2}< q_{2}$. 
First of all we analyze the effects of a wave related only to the density $\rho$, i.e. we send a certain $\ondarho_{2}$ on road 2 keeping $w_{2}$ fixed. In Figure \subref*{fig:B2min} we show a possible solution given by the algorithm. Specifically we have
\begin{equation*}
	 q_{1}=\qS_{1},\qquad  q_{2}\geq\qS_{2}=\ondaq_{2}, \qquad \qS_{3}=\at q_{1}+\bt\ondaq_{2}\leq q_{3}, \qquad \qS_{4}=\aq q_{1}+\bq\ondaq_{2}\leq q_{4}.
\end{equation*}
We refer to the Appendix of \cite{dellemonache2018CMS} for the estimates of $\Delta\Gamma$, $\Delta\hh$ and $\Delta\tv_{Q}$ of \ref{rsp2} and \ref{rsp3}. Note that $\qS_{3}\geq\at q_{1}$ and $\qS_{4}\geq\aq q_{1}$. By \eqref{eq:w3T1} and \eqref{eq:w4T1} we have
	\begin{flalign*}
	|\wS_{3}-w_{3}|&\leq\frac{\bt|w_{1}-w_{2}|}{ q_{3}}|\ondaq_{2}- q_{2}|,\qquad
	|\wS_{4}-w_{4}|\leq\frac{\bt|w_{1}-w_{2}|}{ q_{4}}|\ondaq_{2}- q_{2}|&&\\
	\Rightarrow\Delta\tv_{w}(\tb)&=\left(\frac{\bt}{ q_{3}}+\frac{\bq}{ q_{4}}\right)|w_{1}-w_{2}||\ondaq_{2}- q_{2}|.
	&&
	\end{flalign*}

Next, we analyze the effects of a wave in $\rho$ and $w$, i.e. we send a couple $(\ondarho_{2},\ondaw_{2})$ on road 2 such that we still have $\ondaq_{2}< q_{2}$. The estimates on $\Gamma$, $\hh$ and $\tv_{Q}$ do not change, while for $\tv_{w}$ by \eqref{eq:wt3T1} and \eqref{eq:wt4T1} we have
\begin{flalign*}
	|\wS_{3}-w_{3}|&\leq \frac{\bt\ondaq_{2}}{\at q_{1}}|\ondaw_{2}-w_{2}|+\frac{\bt|w_{2}-w_{1}|}{ q_{3}}|\ondaq_{2}- q_{2}|&&\\
	|\wS_{4}-w_{4}|&\leq \frac{\bt\ondaq_{2}}{\at q_{1}}|\ondaw_{2}-w_{2}|+\frac{\bt|w_{2}-w_{1}|}{ q_{4}}|\ondaq_{2}- q_{2}|&&\\
	\tv_{w}(\tb+)&\leq \left(\frac{\bt}{\at}+\frac{\bq}{\aq}\right)\frac{\ondaq_{2}}{ q_{1}}|\ondaw_{2}-w_{2}|+\left(\frac{\bt}{ q_{3}}+\frac{\bq}{ q_{4}}\right)|w_{2}-w_{1}||\ondaq_{2}- q_{2}|
	&&\\
	\tv_{w}(\tb-)&=|\ondaw_{2}-w_{2}|&&\\
	\Rightarrow \Delta\tv_{w}(\tb)&\leq\left[\left(\frac{\bt}{\at}+\frac{\bq}{\aq}\right)\frac{\ondaq_{2}}{ q_{1}}-1\right]|\ondaw_{2}-w_{2}|+\left(\frac{\bt}{ q_{3}}+\frac{\bq}{ q_{4}}\right)|w_{2}-w_{1}||\ondaq_{2}- q_{2}|.&&
\end{flalign*}
Therefore \ref{rsp2}, \ref{rsp3} and \ref{rsp4} hold.

\end{enumerate}

\begin{figure}[h!]
\centering
\includegraphics[]{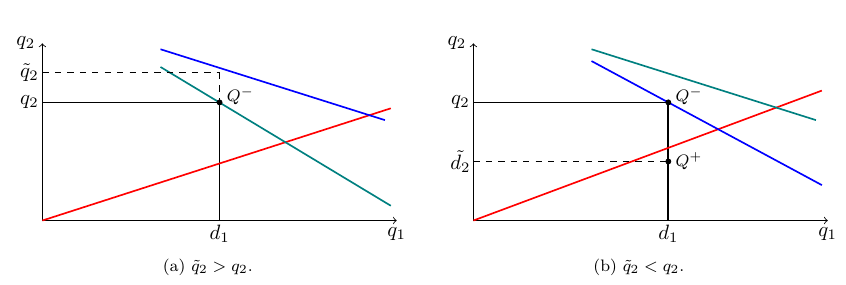}
\caption{Case B2: Wave on road 2.}
\label{fig:B2}
\end{figure}

\subsubsection{Case B3: Wave on road 3}\label{sec:B3}
We now consider a wave on road 3. The case of a wave on road 4 is analogous. Note that in this case we are not interested in what happens sending a wave in $\rho$ and $w$, since the changes in $w$ on the outgoing roads do not affect the Riemann solver. Hence, we only study waves which involve the density $\rho$ and keep $w_{3}$ fixed.

\begin{enumerate}[label=\roman*)]
\item We assume $\ondaq_{3}> q_{3}$. 
We send a certain $\ondarho_{3}$ on road 3 keeping $w_{3}$ fixed. In Figure \subref*{fig:B3magg} we show a possible solution given by the algorithm. Specifically we have
\begin{equation*}
	 q_{1}=\qS_{1},\qquad  q_{2}\leq\qS_{2}, \qquad \qS_{3}=\at q_{1}+\bt\qS_{2}\geq  q_{3}, \qquad \qS_{4}=\aq q_{1}+\bq\qS_{2}\geq q_{4}.
\end{equation*}
Note that
\begin{align*}
	\qS_{2}&=\frac{\qS_{3}-\at q_{1}}{\bt},\qquad
	 q_{2}=\frac{ q_{3}-\at q_{1}}{\bt}\\
	 q_{4}&=\aq q_{1}+\frac{\bq}{\bt}( q_{3}-\at q_{1}),\qquad
	\qS_{4}=\aq q_{1}+\frac{\bq}{\bt}(\qS_{3}-\at q_{1}).
\end{align*}
We compute
\begin{itemize}[label={*},leftmargin=*]
	\Item
	\begin{flalign*}
	\Gamma(\tb+) & = \qS_{1}+\qS_{2}= q_{1}+\qS_{2}, \qquad \Gamma(\tb-) =  q_{1}+ q_{2}&&\\
	\Rightarrow \Delta\Gamma(\tb) & = \qS_{2}- q_{2}=\frac{\qS_{3}- q_{3}}{\bt}.&&
	\end{flalign*}
	\Item
	\begin{flalign*}
	\hh(\tb+)&=\hh(\tb-)=\frac{ q_{1}}{\puno}&&\\
	\Rightarrow \Delta \hh(\tb)&=0.&&
	\end{flalign*}
	\Item
	\begin{flalign*}
	\tv_{Q}(\tb+) &= |\qS_{2}- q_{2}|+|\qS_{3}-\ondaq_{3}|+\bq|\qS_{2}- q_{2}| =\frac{1+\bq}{\bt}|\qS_{3}- q_{3}|+|\qS_{3}-\ondaq_{3}|&&\\
	\tv_{Q}(\tb-) &= |\ondaq_{3}- q_{3}|&&\\
	\Rightarrow\Delta\tv_{Q}(\tb) &=\frac{1+\bq}{\bt}|\qS_{3}- q_{3}|+|\qS_{3}-\ondaq_{3}|-|\ondaq_{3}- q_{3}|\leq\frac{2}{\bt}|\qS_{3}- q_{3}|.
	\end{flalign*}
	\item By \eqref{eq:w3T1} and \eqref{eq:w4T1}
	\begin{flalign*}
	|\wS_{3}-w_{3}|&=\frac{\at q_{1}|w_{1}-w_{2}|| q_{3}-\qS_{3}|}{\qS_{3} q_{3}}\leq\frac{\at\bt q_{1}|w_{1}-w_{2}|}{( q_{3})^{2}}|\Delta\Gamma(\tb)|
	&&\\
	|\wS_{4}-w_{4}|&=\frac{\aq q_{1}|w_{1}-w_{2}|| q_{4}-\qS_{4}|}{\qS_{4} q_{4}}\leq\frac{\aq\bq q_{1}|w_{1}-w_{2}|}{( q_{4})^{2}}|\Delta\Gamma(\tb)|
	&&\\
	\Rightarrow\Delta\tv_{w}(\tb)&\leq\left(\frac{\at\bt}{( q_{3})^{2}}+\frac{\aq\bq}{( q_{4})^{2}}\right) q_{1}|w_{1}-w_{2}||\Delta\Gamma(\tb)|.&&
	\end{flalign*}
\end{itemize}
Therefore \ref{rsp2} holds.

\item We assume $\ondaq_{3}< q_{3}$. 
We send a certain $\ondarho_{3}$ on road 3 keeping $w_{3}$ fixed. In Figure \subref*{fig:B3min} we show a possible solution given by the algorithm. Specifically we have
\begin{equation*}
	 q_{1}\geq\qS_{1},\qquad  q_{2}\geq\qS_{2}, \qquad \qS_{3}=\ondaq_{3}=\at\qS_{1}+\bt\qS_{2}\leq  q_{3}, \qquad \qS_{4}=\aq\qS_{1}+\bq\qS_{2}\leq q_{4}.
\end{equation*}
Note that 
\begin{align*}
	\qS_{1}&=\frac{\puno\ondaq_{3}}{\at\puno+\bt\pdue},\qquad
	 q_{1}=\frac{\puno\qstar_{3}}{\at\puno+\bt\pdue}\\
	\qS_{2}&=\frac{\pdue\ondaq_{3}}{\at\puno+\bt\pdue},\qquad
	 q_{2}=\frac{ q_{3}-\at q_{1}}{\bt}\\
	\qS_{4}&=\frac{\aq\puno+\bq\pdue}{\at\puno+\bt\pdue}\ondaq_{3}.
\end{align*}
We refer to the Appendix of \cite{dellemonache2018CMS} for the estimates of $\Delta\Gamma$, $\Delta\hh$ and $\Delta\tv_{Q}$ of \ref{rsp2} and \ref{rsp3}. We only observe that
\begin{align*}
	\Delta\Gamma(\tb) &=  \frac{\ondaq_{3}-\qstar_{3}}{\at\puno+\bt\pdue}+\frac{\qstar_{3}- q_{3}}{\bt}\\
	\Delta\hh(\tb) & = \frac{\qS_{1}- q_{1}}{\puno}= \frac{\ondaq_{3}-\qstar_{3}}{\at\puno+\bt\pdue}.
\end{align*}
By \eqref{eq:w3T3} and \eqref{eq:w4T3} we have
	\begin{flalign*}
	|\wS_{3}-w_{3}|& \leq\frac{\at\bt\puno|w_{1}-w_{2}|}{(\at\puno+\bt\pdue) q_{3}}(|\Delta\Gamma(\tb)|+|\Delta\hh(\tb)|)&&\\
	|\wS_{4}-w_{4}|&\leq\frac{\aq\bq\puno|w_{1}-w_{2}|}{(\aq\puno+\bq\pdue) q_{4}}(|\Delta\Gamma(\tb)|+|\Delta\hh(\tb)|)&&\\
	\Rightarrow\Delta\tv_{w}(\tb)&\leq\left(\frac{\at\bt}{ q_{3}(\puno\at+\pdue\bt)}+\frac{\aq\bq}{ q_{4}(\puno\aq+\pdue\bq)}\right)\puno|w_{1}-w_{2}|(|\Delta\Gamma(\tb)|+|\Delta\hh(\tb)|).&&
	\end{flalign*}
Therefore \ref{rsp2} and \ref{rsp3} hold.
\end{enumerate}

\begin{figure}[h!]
\centering
\includegraphics[]{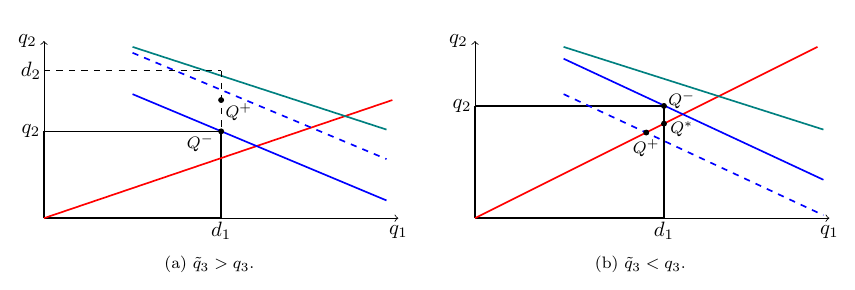}
\caption{Case B3: Wave on road 3.}
\label{fig:B3}
\end{figure}

\subsection{Case C} 
This case is verified when the equilibrium is defined by one of the straight lines which maximize the outgoing flux $\rettas_{3}$ or $\rettas_{4}$. Without loss of generality, we assume that the priority rule $\rettar$ first intersects the straight line $\rettas_{3}$. We study the effects produced by a single wave sent on each road.
\subsubsection{Case C1: Wave on road 1}\label{sec:C1}
Let us start with a wave on road 1.
\begin{enumerate}[label=\roman*)]
\item We assume $\ondaq_{1}> q_{1}$, see Figure \subref*{fig:C1magg}. The solution coincides with the equilibrium, thus nothing happens.

\item We assume $\ondaq_{1}< q_{1}$. 
First of all we analyze the effects of a wave related only to the density $\rho$, i.e. we send a certain $\ondarho_{1}$ on road 1 keeping $w_{1}$ fixed. In Figure \subref*{fig:C1min} we show a possible solution given by the algorithm. Specifically we have
\begin{equation*}
	 q_{1}>\qS_{1}=\ondaq_{1},\qquad  q_{2}\leq\qS_{2}, \qquad \qS_{3}=\at\ondaq_{1}+\bt\qS_{2}, \qquad \qS_{4}=\aq\ondaq_{1}+\bq q_{2}.
\end{equation*}
Note that $\qS_{3}\geq\at\ondaq_{1}$ and $\qS_{4}\geq\aq\ondaq_{1}$. Moreover, we observe that if $\qS_{2}< q_{2}$ then $\qS_{2}- q_{2}<0$, otherwise we define the angle $\beta=\arctan(|\ddue- q_{2}|/|\ondaq_{1}- q_{1}|)$ between the segments $|Q^{-}Q^{*}|$ and $|\widehat QQ^{-}|$ and we obtain $\qS_{2}- q_{2}=|\qS_{2}- q_{2}|\leq \tan\beta|\ondaq_{1}- q_{1}|$.
We compute
\begin{itemize}[label={*},leftmargin=*]
	\Item
	\begin{flalign*}
	\Gamma(\tb+) & = \qS_{1}+\qS_{2}=\ondaq_{1}+\qS_{2}, \qquad \Gamma(\tb-) =  q_{1}+ q_{2}&&\\
	\Rightarrow \Delta\Gamma(\tb) & = (\ondaq_{1}- q_{1})+(\qS_{2}- q_{2})<C |\ondaq_{1}- q_{1}|.
	\end{flalign*}
	\Item
	\begin{flalign*}
	\hh(\tb+)&=\frac{\ondaq_{1}}{\puno}, \qquad \hh(\tb-) = \frac{ q_{1}}{\puno}&&\\
	\Rightarrow \Delta \hh(\tb)&=\frac{\ondaq_{1}- q_{1}}{\puno}<0.&&
	\end{flalign*}
	\Item
	\begin{flalign*}
	\tv_{Q}(\tb+) &\leq 2|\qS_{2}- q_{2}|+|\ondaq_{1}- q_{1}|\leq 2(|\Delta\Gamma(\tb)|+|\ondaq_{1}- q_{1}|)&&\\
	\tv_{Q}(\tb-) &= |\ondaq_{1}- q_{1}|&&\\
	\Rightarrow\Delta\tv_{Q}(\tb) &\leq 2(|\Delta\Gamma(\tb)|+|\Delta\hh(\tb)|).  &&
	\end{flalign*}
	\item By \eqref{eq:w3T3} and \eqref{eq:w4T3} we have
	\begin{flalign*}
	|\wS_{3}-w_{3}|&\leq\frac{\bt|w_{1}-w_{2}|}{\ondaq_{1} q_{3}}(\ondaq_{1}+\ddue)(|\Delta\Gamma(\tb)|+|\Delta\hh(\tb)|)&&\\
	|\wS_{4}-w_{4}|&\leq\frac{\bq|w_{1}-w_{2}|}{\ondaq_{1} q_{4}}(\ondaq_{1}+\ddue)(|\Delta\Gamma(\tb)|+|\Delta\hh(\tb)|)&&\\
	\Rightarrow\Delta\tv_{w}(\tb)&\leq\left(\frac{\bt}{ q_{3}}+\frac{\bq}{ q_{4}}\right)\frac{|w_{1}-w_{2}|}{\ondaq_{1}}(\ondaq_{1}+\ddue)(|\Delta\Gamma(\tb)|+|\Delta\hh(\tb)|).&&
	\end{flalign*}
\end{itemize}

Next, we analyze the effects of a wave in $\rho$ and $w$, i.e. we send a couple $(\ondarho_{1},\ondaw_{1})$ on road 1 such that we still have $\ondaq_{1}< q_{1}$. The estimates on $\Gamma$, $\hh$ and $\tv_{Q}$ do not change, while for $\tv_{w}$ by \eqref{eq:wt3T3} and \eqref{eq:wt4T3} we have
\begin{flalign*}
	|\wS_{3}-w_{3}|&\leq|\ondaw_{1}-w_{1}|+\frac{\bt|w_{1}-w_{2}|}{\ondaq_{1} q_{3}}(\ondaq_{1}+\ddue)(|\Delta\Gamma(\tb)|+|\Delta\hh(\tb)|)&&\\
	|\wS_{4}-w_{4}|&\leq|\ondaw_{1}-w_{1}|+\frac{\bq|w_{1}-w_{2}|}{\ondaq_{1} q_{4}}(\ondaq_{1}+\ddue)(|\Delta\Gamma(\tb)|+|\Delta\hh(\tb)|)&&\\
	\tv_{w}(\tb+)&\leq 2|\ondaw_{1}-w_{1}|+\left(\frac{\bt}{ q_{3}}+\frac{\bq}{ q_{4}}\right)\frac{|w_{1}-w_{2}|}{\ondaq_{1}}(\ondaq_{1}+\ddue)(|\Delta\Gamma(\tb)|+|\Delta\hh(\tb)|)
	&&\\
	\tv_{w}(\tb-)&=|\ondaw_{1}-w_{1}|&&\\
	\Rightarrow \Delta\tv_{w}(\tb)&\leq|\ondaw_{1}-w_{1}|+\left(\frac{\bt}{ q_{3}}+\frac{\bq}{ q_{4}}\right)\frac{|w_{1}-w_{2}|}{\ondaq_{1}}(\ondaq_{1}+\ddue)(|\Delta\Gamma(\tb)|+|\Delta\hh(\tb)|).&&
\end{flalign*}

Therefore \ref{rsp2}, \ref{rsp3} and \ref{rsp4} hold.

\end{enumerate}

\begin{figure}[h!]
\centering
\includegraphics[]{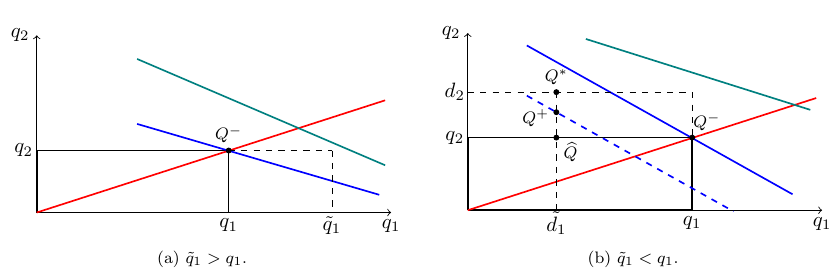}
\caption{Case C1: Wave on road 1.}
\label{fig:C1}
\end{figure}

\subsubsection{Case C2: Wave on road 2}\label{sec:C2}
Let us consider a wave on road 2.
\begin{enumerate}[label=\roman*)]
\item We assume $\ondaq_{2}> q_{2}$, see Figure \subref*{fig:C2magg}. The solution coincides with the equilibrium, thus nothing happens.

\item We assume $\ondaq_{2}< q_{2}$. 
First of all we analyze the effects of a wave related only to the density $\rho$, i.e. we send a certain $\ondarho_{2}$ on road 2 keeping $w_{2}$ fixed. In Figure \subref*{fig:C2min} we show a possible solution given by the algorithm. Specifically we have
\begin{equation*}
	 q_{1}\geq\qS_{1},\qquad  q_{2}\geq\qS_{2}=\ondaq_{2}, \qquad \qS_{3}=\at\qS_{1}+\bt\ondaq_{2}\leq q_{3}, \qquad \qS_{4}=\aq\qS_{1}+\bq\ondaq_{2}\leq q_{4}.
\end{equation*}
Note that $\qS_{3}\geq\bt\ondaq_{2}$ and $\qS_{4}\geq\bq\ondaq_{2}$.
We compute
\begin{itemize}[label={*},leftmargin=*]
	\Item
	\begin{flalign*}
	\Gamma(\tb+) & = \qS_{1}+\qS_{2}=\qS_{1}+\ondaq_{2}, \qquad \Gamma(\tb-) =  q_{1}+ q_{2}&&\\
	\Rightarrow \Delta\Gamma(\tb) & = (\qS_{1}- q_{1})+(\ondaq_{2}- q_{2}) <0.&&
	\end{flalign*}
	\Item
	\begin{flalign*}
	\hh(\tb+)&=\frac{\ondaq_{2}}{\pdue}, \qquad \hh(\tb-) = \frac{ q_{2}}{\pdue}&&\\
	\Rightarrow \Delta \hh(\tb)&=\frac{\ondaq_{2}- q_{2}}{\pdue}<0.&&
	\end{flalign*}
	\Item
	\begin{flalign*}
	\tv_{Q}(\tb+) &\leq 2|\qS_{1}- q_{1}|+|\ondaq_{2}- q_{2}| = 2|\Delta\Gamma(\tb)-(\ondaq_{2}- q_{2})|+|\ondaq_{2}- q_{2}|&&\\
	&\leq 2|\Delta\Gamma(\tb)|+3|\ondaq_{2}- q_{2}|&&\\
	\tv_{Q}(\tb-) &= |\ondaq_{2}- q_{2}|&&\\
	\Rightarrow\Delta\tv_{Q}(\tb) &\leq2(|\Delta\Gamma(\tb)|+|\Delta\hh(\tb)|).&&
	\end{flalign*}
	\item By \eqref{eq:w3T3} and \eqref{eq:w4T3} we have
	\begin{flalign*}
	|\wS_{3}-w_{3}|&\leq\frac{\at( q_{1}+\ondaq_{2})|w_{1}-w_{2}|}{\ondaq_{2} q_{3}}(|\Delta\Gamma(\tb)|+|\Delta\hh(\tb)|)&&\\
	|\wS_{4}-w_{4}|&\leq\frac{\aq( q_{1}+\ondaq_{2})|w_{1}-w_{2}|}{\ondaq_{2} q_{4}}(|\Delta\Gamma(\tb)|+|\Delta\hh(\tb)|)&&\\
	\Rightarrow\Delta\tv_{w}(\tb)&\leq\left(\frac{\at}{ q_{3}}+\frac{\aq}{ q_{4}}\right)\frac{( q_{1}+\ondaq_{2})|w_{1}-w_{2}|}{\ondaq_{2}}(|\Delta\Gamma(\tb)|+|\Delta\hh(\tb)|).
	&&
	\end{flalign*}
\end{itemize}

Next, we analyze the effects of a wave in $\rho$ and $w$, i.e. we send a couple $(\ondarho_{2},\ondaw_{2})$ on road 2 such that we still have $\ondaq_{2}< q_{2}$. The estimates on $\Gamma$, $\hh$ and $\tv_{Q}$ do not change, while for $\tv_{w}$ by \eqref{eq:wt3T3} and \eqref{eq:wt4T3} we have
\begin{flalign*}
	|\wS_{3}-w_{3}|&\leq|\ondaw_{2}-w_{2}|
	+\frac{\at( q_{1}+\ondaq_{2})|w_{1}-w_{2}|}{\ondaq_{2} q_{3}}(|\Delta\Gamma(\tb)|+|\Delta\hh(\tb)|)
	&&\\
	|\wS_{4}-w_{4}|&\leq|\ondaw_{2}-w_{2}|
	+\frac{\aq( q_{1}+\ondaq_{2})|w_{1}-w_{2}|}{\ondaq_{2} q_{4}}(|\Delta\Gamma(\tb)|+|\Delta\hh(\tb)|)
	&&\\
	\tv_{w}(\tb+)&\leq2|\ondaw_{2}-w_{2}|+\left(\frac{\at}{ q_{3}}+\frac{\aq}{ q_{4}}\right)\frac{( q_{1}+\ondaq_{2})|w_{1}-w_{2}|}{\ondaq_{2}}(|\Delta\Gamma(\tb)|+|\Delta\hh(\tb)|)
	&&\\
	\tv_{w}(\tb-)&=|\ondaw_{2}-w_{2}|&&\\
	\Rightarrow \Delta\tv_{w}(\tb)&\leq|\ondaw_{2}-w_{2}|+\left(\frac{\at}{ q_{3}}+\frac{\aq}{ q_{4}}\right)\frac{( q_{1}+\ondaq_{2})|w_{1}-w_{2}|}{\ondaq_{2}}(|\Delta\Gamma(\tb)|+|\Delta\hh(\tb)|).&&
\end{flalign*}
Therefore \ref{rsp2}, \ref{rsp3} and \ref{rsp4} hold.

\end{enumerate}

\begin{figure}[h!]
\centering
\includegraphics[]{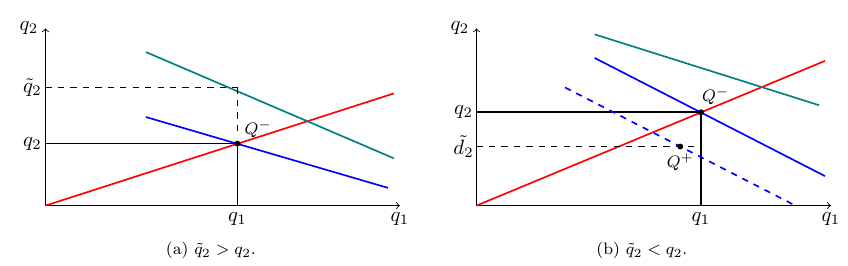}
\caption{Case C2: Wave on road 2.}
\label{fig:C2}
\end{figure}
 
\subsubsection{Case C3: Wave on road 3}\label{sec:C3}
We now consider a wave on road 3. The case of a wave on road 4 is analogous
\begin{enumerate}[label=\roman*)]
\item We assume $\ondaq_{3}> q_{3}$. We send a certain $\ondarho_{3}$ on road 3 keeping $w_{3}$ fixed. 
In Figure \subref*{fig:C3magg} we show a possible solution given by the algorithm.
Specifically we have
\begin{equation*}
	 q_{1}\leq\qS_{1},\qquad  q_{2}\leq\qS_{2}, \qquad \qS_{3}=\at\qS_{1}+\bt\qS_{2}\geq q_{3}, \qquad \qS_{4}=\aq\qS_{1}+\bq\qS_{2}\geq q_{4}.
\end{equation*}
Note that if $\ondaq_{3}\leq\pdue\duno/\puno$ then this case is similar to the case C3 with $\ondaq_{3}< q_{3}$. We have $\ondaq_{3}>\pdue\qS_{1}/\puno=\qstar_{3}$ with $\qS_{1}=\duno$. Moreover 
\begin{align*}
	 q_{1}&=\frac{\puno q_{3}}{\at\puno+\bt\pdue},\qquad
	\qS_{1}=\frac{\puno\qstar_{3}}{\at\puno+\bt\pdue}\\
	 q_{2}&=\frac{\pdue q_{3}}{\at\puno+\bt\pdue},\qquad
	\qS_{2}=\frac{\qS_{3}-\at\qS_{1}}{\bt}=\frac{\qS_{3}}{\bt}-\frac{\at\puno\qstar_{3}}{\bt(\at\puno+\bt\pdue)}.
\end{align*}
We compute
\begin{itemize}[label={*},leftmargin=*]
	\Item
	\begin{flalign*}
	\Gamma(\tb+) & = \qS_{1}+\qS_{2}, \qquad \Gamma(\tb-) =  q_{1}+ q_{2}&&\\
	\Rightarrow \Delta\Gamma(\tb) & = (\qS_{1}- q_{1})+(\qS_{2}- q_{2}).
	&&
	\end{flalign*}
	\Item
	\begin{flalign*}
	\hh(\tb+)&=\frac{\qS_{1}}{\puno}, \qquad \hh(\tb-) = \frac{ q_{1}}{\puno}&&\\
	\Rightarrow \Delta \hh(\tb)&=\frac{\qS_{1}- q_{1}}{\puno}=\frac{\puno(\qstar_{3}- q_{3})}{\at\puno+\bt\pdue}\leq\frac{\puno}{\at\puno+\bt\pdue}|\ondaq_{3}- q_{3}|.&&
	\end{flalign*}
	\Item
	\begin{flalign*}
	\tv_{Q}(\tb+) &\leq (1+\at+\aq)|\qS_{1}- q_{1}|+(1+\bt+\bq)|\qS_{2}- q_{2}|&&\\
	&=2|\qS_{1}- q_{1}|+2|\Delta\Gamma(\tb)-(\qS_{1}- q_{1})|
	\leq 4(|\Delta\Gamma(\tb)+\Delta\hh(\tb)|)
	&&\\
	\tv_{Q}(\tb-) &= |\ondaq_{3}- q_{3}|&&\\
	\Rightarrow\Delta\tv_{Q}(\tb) &\leq  4(|\Delta\Gamma(\tb)+\Delta\hh(\tb)|).&&
	\end{flalign*}
	\item By \eqref{eq:w3T3} and \eqref{eq:w4T3} we have
	\begin{flalign*}
	|\wS_{3}-w_{3}|&\leq\frac{\at\bt(\duno+\ddue)|w_{1}-w_{2}|}{( q_{3})^{2}}(|\Delta\Gamma|+|\Delta\hh(\tb)|)&&\\
	|\wS_{4}-w_{4}|&\leq\frac{\aq\bq(\duno+\ddue)|w_{1}-w_{2}|}{( q_{4})^{2}}(|\Delta\Gamma|+|\Delta\hh(\tb)|)&&\\	
	\Rightarrow\Delta\tv_{w}(\tb)&\leq\left(\frac{\at\bt}{( q_{3})^{2}}+\frac{\aq\bq}{( q_{4})^{2}}\right)(\duno+\ddue)|w_{1}-w_{2}|(|\Delta\Gamma|+|\Delta\hh(\tb)|).
	&&
	\end{flalign*}
\end{itemize}
Therefore \ref{rsp2} and \ref{rsp4} hold.

\item We assume $\ondaq_{3}< q_{3}$. 
We send a certain $\ondarho_{3}$ on road 3 keeping $\wS_{3}$ fixed. In Figure \subref*{fig:C3min} we show a possible solution given by the algorithm. Specifically we have
\begin{equation*}
	 q_{1}\geq\qS_{1},\qquad  q_{2}\geq\qS_{2}, \qquad \qS_{3}=\at\qS_{1}+\bt\qS_{2}=\ondaq_{3}\leq q_{3}, \qquad \qS_{4}=\aq\qS_{1}+\bq\qS_{2}\leq q_{4}.
\end{equation*}
We refer to the Appendix of \cite{dellemonache2018CMS} for the estimates of $\Delta\Gamma$, $\Delta\hh$ and $\Delta\tv_{Q}$ of \ref{rsp2} and \ref{rsp3}. Moreover, since both for $Q^{-}$ and $Q^{+}$ the solution is found with first step of the algorithm we have
\begin{align*}
	\wS_{3}&=w_{3} = \frac{\at\puno w_{1}+\bt\pdue w_{2}}{\at\puno+\bt\pdue}\\
	\wS_{4}&=w_{4} = \frac{\aq\puno w_{1}+\bq\pdue w_{2}}{\at\puno+\bt\pdue},	
\end{align*}
hence $\Delta\tv_{w}(\tb)=0$.
Therefore \ref{rsp2} and \ref{rsp3} hold.
\end{enumerate}

\begin{figure}[htpb!]
\centering
\includegraphics[]{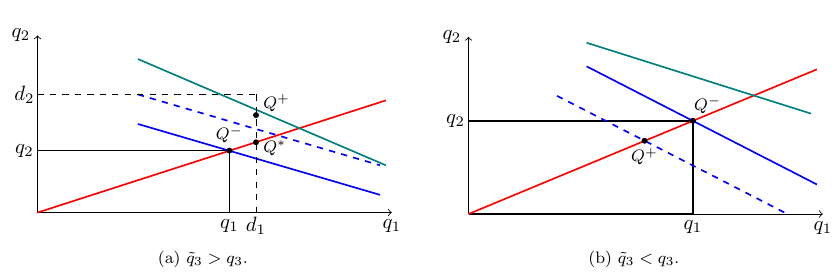}
\caption{Case C3: Wave on road 3.}
\label{fig:C3}
\end{figure}

\bibliographystyle{siam}
\bibliography{references_complete}

\end{document}